\documentclass[10pt]{article}

\usepackage{amsthm}
\usepackage{amsmath}
\usepackage{amsfonts}
\usepackage{amssymb}
\usepackage{mathrsfs}
\usepackage{empheq}
\usepackage{stmaryrd}
\usepackage{dsfont}
\usepackage{enumerate}
\usepackage{cancel}
\usepackage{comment}
\usepackage{pgfplots}
\usepackage{soul}
\usepackage{multirow}
\usetikzlibrary{patterns} 
\newcommand{\hide}[1]{}
\usepackage[applemac]{inputenc}
\usepackage{hyperref}
\usepackage{xcolor}

\definecolor{mygreen}{rgb}{0, 0.5, 0}

\newcommand{\1}{\mathds{1}}

\newcommand{\N}{\mathbb{N}}

\newcommand{\R}{\mathbb{R}}
\newcommand{\C}{\mathbb{C}}

\newcommand{\Dr}{\mathscr{D}}

\newcommand{\Or}{\mathscr{Or}}

\newcommand{\vphi}{\varphi}
\newcommand{\eps}{\varepsilon}
\newcommand{\dsp}{\displaystyle}
\newcommand{\ovl}{\overline}

\newcommand{\vlim}{\lim\limits}

\newcommand{\vmax}{\max\limits}

\newcommand{\vsup}{\sup\limits}

\newcommand{\vint}{\int\limits}

\newcommand{\inj}{\hookrightarrow}
\newcommand{\tends}{\longrightarrow}
\newcommand{\weak}{\rightharpoonup}
\newcommand{\wt}{\widetilde}

\newcommand{\loc}{\mathrm{loc}}

\renewcommand{\b}{\mathrm{b}}
\newcommand{\co}{\mathrm{c}}
\renewcommand{\d}{\mathrm{d}}

\newcommand{\vi}{\mathrm{i}}
\newcommand{\e}{\mathrm{e}}
\newcommand{\w}{{\textsl w}}
\renewcommand{\le}{\leqslant}
\renewcommand{\ge}{\geqslant}
\renewcommand{\Re}{\mathrm{Re}}
\renewcommand{\Im}{\mathrm{Im}}
\newcommand{\bs}{\boldsymbol}
\newcommand{\p}{\prime}

\newcommand{\eqdef}{\stackrel{\mathrm{def}}{=}}
\DeclareMathOperator{\supp}{supp}

\DeclareMathOperator{\Arg}{Arg}

\numberwithin{equation}{section}

\newtheorem{thm}{Theorem}[section]
\newtheorem{prop}[thm]{Proposition}

\newtheorem{lem}[thm]{Lemma}

\theoremstyle{definition}
\newtheorem{rmk}[thm]{Remark}
\newtheorem{defi}[thm]{Definition}

\newtheorem{assum}[thm]{Assumption}

\newenvironment{proof*}{\noindent{\bf Proof.}}{\qed}
\newenvironment{vproof}[1]{\noindent{\bf Proof #1}}{\qed}

\title{\huge Damped nonlinear Ginzburg--Landau equation with saturation. Part I. Existence of solutions on general domains}
\author{\sc Pascal B\'{e}gout and Jes\'us Ildefonso D{\'{\i}}az}
\date{}

\begin{document}

\maketitle

\begin{center}
\begin{tabular}{ll}
\hspace*{-.28cm}$^*$ Toulouse School of Economics	&	\hspace*{-.25cm}$^\dagger$ Instituto de Matem\'atica Interdisciplinar	\\
Université Toulouse Capitole 						&	Universidad Complutense de Madrid								\\
Institut de Mathématiques de Toulouse 				&	Plaza de las Ciencias, 3										\\
1, Esplanade de l'Université 						&	28040 Madrid, SPAIN										\\
31080 Toulouse Cedex 6, FRANCE					& 															\\
{\footnotesize E-mail\:: \href{mailto:Pascal.Begout@math.cnrs.fr}{\texttt{Pascal.Begout@math.cnrs.fr}}}
&
{\footnotesize E-mail\:: \href{mailto:jidiaz@ucm.es}{\texttt{jidiaz@ucm.es}}}
\end{tabular}
\end{center}

\begin{abstract}
We study the complex Ginzburg--Landau equation posed on possibly unbounded domains, including some singular and saturated nonlinear damping terms. This model interpolates between the nonlinear Schr\"odinger equation and dissipative parabolic dynamics through a complex time-derivative prefactor, capturing the interplay between dispersion and dissipation. Under suitable structural conditions on the complex coefficients, we establish the existence and uniqueness of global solutions. The analysis relies on the delicate proofs that the maximal monotone operator theory can be adapted to this framework, even for unbounded domains.
\end{abstract}

{\let\thefootnote\relax\footnotetext{Pascal Bégout acknowledges funding from ANR under grant ANR-17-EUR-0010 (Investissements d'Avenir program)}}
{\let\thefootnote\relax\footnotetext{The research of J.\,I.\:D\'{\i}az was partially supported by the project PID-2020-112517GBI00 of the AEI and MCIU/AEI/10.13039/-501100011033/FEDER, EU}}
{\let\thefootnote\relax\footnotetext{$^*$\href{https://orcid.org/0000-0002-9172-3057}{https://orcid.org/0000-0002-9172-3057}}}
{\let\thefootnote\relax\footnotetext{$^\dagger$\href{https://orcid.org/0000-0003-1730-9509}{https://orcid.org/0000-0003-1730-9509}}}
{\let\thefootnote\relax\footnotetext{2020 Mathematics Subject Classification: 35Q56 (35A01, 35A02, 35D30, 35D35)}}
{\let\thefootnote\relax\footnotetext{Keywords: Damped Ginzburg--Landau equation, Saturated nonlinearity, Finite time extinction, Maximal monotone operators, Existence and regularity of weak solutions}}

\tableofcontents

\baselineskip .6cm

\section{Introduction}
\label{introduction}

The complex Ginzburg--Landau equation constitutes one of the most fundamental models in the theory of nonlinear dissipative systems. Originally introduced in the context of superconductivity, it describes the evolution of a complex order parameter whose modulus represents the density of superconducting electron pairs, while its phase encodes macroscopic quantum coherence. Beyond this original setting, Ginzburg--Landau type equations arise naturally in nonlinear optics, laser dynamics, superfluidity, Bose--Einstein condensation, chemical reactions, and pattern formation in systems far from thermodynamic equilibrium (see, for instance, the pioneering work of Ginzburg and Landau~\cite{zbMATH07887240} in 1950 in superconductivity, the monographs by Kuramoto~\cite{MR762432}, Temam~\cite{MR1441312} and Levy~\cite{zbMATH01456176} and some survey papers as, e.g., Battogtokh and Mikhailov~\cite{MR1369492}, Aranson and Kramer~\cite{MR1895097}).

\bigskip
\noindent
From a physical point of view, the relevance of the complex Ginzburg--Landau equation lies in its ability to capture the coexistence and competition between diffusion, dispersion, nonlinear amplification, saturation and dissipation. This mixture of mechanisms makes it a universal amplitude equation governing the modulation of instabilities near criticality and the emergence of coherent structures such as vortices, defects and spatio-temporal patterns.

\bigskip
\noindent
From a mathematical perspective, the equation provides a paradigmatic example of an infinite-dimensional dissipative dynamical system. As emphasized in the monograph of Temam~\cite{MR1441312} (see also Ginibre and Velo~\cite{MR1406282,MR1463822}), it plays a central role in the development of the theory of global attractors, asymptotic compactness and long-time dynamics for nonlinear evolution equations.

\bigskip
\noindent
In several recent works (see, e.g., \cite{MR4725781}), the strong stabilization of a damped nonlinear Schr\"{o}dinger equation with saturation effects was established on unbounded domains. That analysis demonstrates that suitably chosen nonlinear damping mechanisms can overcome dispersive
effects even in the absence of compactness properties typically available in bounded domains. Such results are particularly relevant for physical systems
modeled in open space, where boundary confinement cannot be assumed. The main goal of this paper is to extend the general approach taken in the
theory presented in \cite{MR4725781} in order to extend previous results in the literature on complex Ginzburg--Landau equation in which the
saturation term is understood as an absorption term (see, e.g., Antontsev, Dias and Figueira~\cite{MR3208711} and~\cite{zbMATH05502727,MR2194979,MR2268809,MR2365580,MR4098331}).

\bigskip
\noindent
The damped nonlinear Schr\"{o}dinger equation may be viewed as a limiting or simplified model within the broader Ginzburg--Landau framework. Introducing a complex coefficient in front of the time derivative allows one to interpolate continuously between purely dispersive Schr\"{o}dinger dynamics and purely dissipative parabolic dynamics. This observation motivates the extension of the stabilization theory developed in \cite{MR4725781} to the complex Ginzburg--Landau equation posed on general domains $\Omega\subseteq\R^N$ (possible unbounded), with boundary $\partial\Omega,$
\begin{empheq}[left=\empheqlbrace]{align}
\label{gl}
\e^{-\vi\theta}\frac{\partial u}{\partial t}-\Delta u+a|u|^{-(1-m)}u+b|u|^{p-1}u+\gamma u=f,	&	\text{ in } (0,\infty)\times\Omega,				\\
\label{glb}
u_{|\partial\Omega}=0,													&	\text{ on } (0,\infty)\times\partial\Omega,	\dfrac{}{}	\\
\label{u0}
u(0)= u_0,																&	\text{ in } \Omega,
\end{empheq}
where $\theta\in\left(-\frac\pi2,\frac\pi2\right),$ $0\le m\le1$ and $a,b,\gamma\in\C.$ Here we write, for generality $p\in(1,\infty)$ but the physically more often case considered in the literature corresponds to $p=3.$

\bigskip
\noindent
For $\theta\in\left[-\frac\pi2,\frac\pi2\right]$ and $m\ge0,$ we introduce the following set of complex numbers:
\begin{gather}
\label{Cm}
C_\theta(m)=\Big\{z\in\C; \; \Re(z\e^{\vi\theta})>0 \text{ and } 2\sqrt m\,\Re(z\e^{\vi\theta})\ge|1-m|\,|\Im(z\e^{\vi\theta})|\Big\}.
\end{gather}
In the particular cases in which $m\in\{0,1\},$ the set $C_\theta(m)$ becomes,
\begin{align}
\label{C0}
& C_\theta(0)=\Big\{z\in\C; \; \Re(z\e^{\vi\theta})>0 \text{ and } \Im(z\e^{\vi\theta})=0\Big\},	\\
\label{C1}
& C_\theta(1)=\Big\{z\in\C; \; \Re(z\e^{\vi\theta})>0\Big\},
\end{align}
and actually,
\begin{gather}
\label{C0'}
C_\theta(0)=\Big\{z\in\C; \; \exists\mu>0 \text{ such that } z=\mu\,\e^{-\vi\theta}\Big\}.
\end{gather}

\noindent
We note that if $\theta=\dfrac\pi2,$ $0\le m\le1,$ $a\in C_\theta(m),$ $b=0,$ $\gamma=-V(x)\in L^1_\loc(\Omega;\R)$ and $f\in L^1_\loc\big([0,\infty);L^2(\Omega)\big),$ then equation~\eqref{gl} becomes
\begin{gather}
\label{nls}
\vi\frac{\partial u}{\partial t}+\Delta u+V(x)u-a|u|^{-(1-m)}u=-f.
\end{gather}
It follows that the nonlinear Schr\"{o}dinger equation~\eqref{nls} is a limit case of the Ginzburg--Landau equation~\eqref{gl}, in terms of $\theta.$ But the Ginzburg--Landau equation~\eqref{gl} may also be considered as an intermediate equation between the nonlinear Schr\"{o}dinger equation and the nonlinear heat equation
\begin{gather*}
\frac{\partial u}{\partial t}-\Delta u+a|u|^{-(1-m)}u=f,
\end{gather*}
by taking $\theta=0,$ $a\in\R$ and $b=\gamma=0$ in \eqref{gl}. In this last case, $a\in C_0(m)$ only means that $a$ is a positive real number.

\bigskip
\noindent
We point out that the analysis of the complex Ginzburg--Landau equation on unbounded domains presents several intertwined difficulties. The complex prefactor $e^{-\vi\theta}$ in front of the time derivative breaks both the purely Hamiltonian structure of Schr\"{o}dinger-type equations and the gradient-flow structure of parabolic equations.

\bigskip
\noindent
The strategy of the proof relies on the use of suitable energy methods, sharpening the ones presented in the monograph \cite{MR2002i35001}. Those methods capture the effective dissipation induced by the nonlinear terms, combined with refined energy estimates adapted to unbounded domains. Singular nonlinearities with $0\le m<1$ require weak formulations and truncation arguments to control the dynamics near vanishing amplitudes. In a different paper, \cite{Part2}, we will prove the strong stabilization of solutions in a finite time under suitable conditions. 

\bigskip
\noindent
The nonlinear terms appearing in the complex Ginzburg--Landau equation introduce amplitude-dependent dissipation that becomes particularly effective in regimes where linear mechanisms fail. The singular term $|u|^{-(1-m)}u$ acts as a strong damping mechanism near low-amplitude states, suppressing residual oscillations and preventing the persistence of small-amplitude coherent structures. From the physical point of view, this term can be interpreted as a saturation or threshold effect that inhibits the survival of weak excitations.

\bigskip
\noindent
The complex prefactor $e^{-\vi\theta}$ plays a fundamental role in shaping the dynamics. For $\theta\neq0$, the system no longer conserves energy in the Hamiltonian sense, and the interaction between dispersive and dissipative components leads to a gradual relaxation toward equilibrium. This behavior
is characteristic of systems far from equilibrium, where dissipation and dispersion coexist and compete.

\bigskip
\noindent
In this paper, existence and uniqueness of the solutions are obtained under the assumption that $a\in C_\theta(m),$ while for the equation~\eqref{nls}, they are proved in the series of papers \cite{MR4098330,MR4053613,MR4340780,MR4503241,MR4725781} under the assumption that $-a\in C(m),$ where
\begin{gather*}
C(m)=\Big\{z\in\C; \; \Im(z)>0 \text{ and } 2\sqrt m\Im(z)\ge(1-m)|\Re(z)|\Big\}.
\end{gather*}
We note that assuming $a\in C_\frac\pi2(m)$ is equivalent to assuming $-a\in C(m).$

\bigskip
\noindent
The organization of this paper is the following. Section~\ref{exiuni} presents the statements of the main results concerning the existence and uniqueness of the solutions. Before to entering into the detailed proofs, we collect in Section~\ref{notfunana} a set of notations and some basic results of Functional Analysis which will be used in this paper. Although several other alternatives are possible, in our approach to the proofs of the existence of solutions we will use the abstract theory of maximal monotone operators, according the well-known theory mainly developed by Ha\"{\i}m Brezis and improving the results presented in Okazawa and Yokota~\cite{MR1886827}. Section ~\ref{maxmon} contains the detailed definition of the operators we will consider proving their monotonicity and the range properties which show that they are maximal monotone operators. Finally, in Section~\ref{proofexi}, we present the proofs of the results concerning the existence and uniqueness of the solutions.

\section{Existence and uniqueness of the solutions}
\label{exiuni}

Our main assumptions concerning the existence and uniqueness of the solutions are the following:
\begin{assum}
\label{ass}
We assume the following.
\begin{gather}
\label{O}
\Omega \text{ is any nonempty open subset of } \R^N,	\\
\label{t}
-\dfrac\pi2<\theta<\dfrac\pi2,						\\
\label{m}
m\in[0,1] \; \text{ and } \; p\in(1,\infty),					\\
\label{a}
a\in C_\theta(m) \; \text{ and } \; b\in C_\theta(p)\cup\{0\},	\\
\label{g}
\gamma\in\C \text{ with } \Re(\gamma\e^{\vi\theta})\ge0,
\end{gather}
where $C_\theta$ is defined in~\eqref{Cm}.
\end{assum}

\begin{defi}
\label{defsatsec}
Let $\Or\subseteq\R^N$ be an open subset and let $u\in L^1_\loc(\Or).$ A function $U$ is said to be a \textit{saturated section} associated to $u$ if $U\in L^\infty(\Or),$ $\|U\|_{L^\infty(\Or)}\le1$ and $U=\dfrac{u}{|u|},$ almost everywhere where $u\neq0.$
\end{defi}

\noindent
Now, let us define the notion of solution.

\begin{defi}
\label{defsol}
Assume~\eqref{O} and \eqref{m}. Let $\theta\in[0,2\pi],$ $a,b,\gamma\in\C,$ $f\in L^1_\loc\big([0,\infty);L^2(\Omega)\big)$ and $u_0\in L^2(\Omega).$ Let us consider the following assertions.
\begin{enumerate}
\item
\label{defsol1}
Let $X_{m,p}=L^{m+1}(\Omega)\cap L^{p+1}(\Omega).$ For $q=m$ or $q=p,$ we have that
\begin{gather}
\label{defsol11}
u\in L^{q+1}_\loc\big([0,\infty);H^1_0(\Omega)\cap X_{m,p}\big)\cap W^{1,\frac{q+1}q}_\loc\big([0,\infty);H^\star+X_{m,p}^\star\big)
\end{gather}
\item
\label{defsol2}
For almost every $t>0,$ $\Delta u(t)\in H^\star.$
\item
\label{defsol3}
\begin{enumerate}
\item
\label{defsol3a}
If $m>0$ then $u$ satisfies~\eqref{gl} in $\Dr^\p\big((0,\infty)\times\Omega\big).$
\item
\label{defsol3b}
If $m=0$ then there exists a saturated section $U$ associated to $u$ such that the pair $(u,U)$ satisfies
\begin{gather}
\label{gl0}
\e^{-\vi\theta}\frac{\partial u}{\partial t}-\Delta u+a\,U+b|u|^{p-1}u+\gamma u=f, \text{ in } \Dr^\p\big((0,\infty)\times\Omega\big).
\end{gather}
\end{enumerate}
\item
\label{defsol4}
We have that $u(0)=u_0,$ in $L^2(\Omega).$
\end{enumerate}
We shall say that $u$ is a \textit{strong solution} if $u$ is an $H^2$-solution or an $H^1_0$-solution. We shall say that $u$ is an $H^2$-\textit{solution} (respectively, an $H^1_0$-\textit{solution}) \textit{to} \eqref{gl}--\eqref{u0}, if $u$ satisfies the Assertions~\ref{defsol1}--\ref{defsol4} with $H=L^2(\Omega)$ \big(respectively, with $H=H^1_0(\Omega)\big).$
\\
We shall say that $u$ is an $L^2$-\textit{solution} or a \textit{weak solution to} \eqref{gl}--\eqref{u0} if there exists a pair,
\begin{gather}
\label{fn}
(u_n,f_n)_{n\in\N}\subset C\big([0,\infty);L^2(\Omega)\big)\times L^1_\loc\big([0,\infty);L^2(\Omega)\big),
\end{gather}
such that for any $n\in\N,$ $u_n$ is an $H^2$-solution to \eqref{gl}--\eqref{u0} where the right hand side of \eqref{gl} is $f_n,$ and if
\begin{gather}
\label{cv}
u_n\xrightarrow[n\to\infty]{C([0,T];L^2(\Omega))}u \; \text{ and } \; f_n\xrightarrow[n\to\infty]{L^1((0,T);L^2(\Omega))}f,
\end{gather}
for any $T>0.$ Sometimes, we shall write $(u,f),$ $(u,U)$ or $(u,U,f)$ to designate a solution with the obvious meanings.
\end{defi}

\begin{rmk}
\label{rmkdefsol}
Below are some comments about Definition~\ref{defsol}.
\begin{enumerate}
\item
\label{rmkdefsol1}
If $a\in C_\theta(0)$ then by \eqref{C0'} there exists $\mu>0$ such that $a=\mu\e^{-\vi\theta}.$ It follows that \eqref{gl0} may be rewritten as,
\begin{gather}
\label{gl0'}
\e^{-\vi\theta}\frac{\partial u}{\partial t}-\Delta u+\mu\,\e^{-\vi\theta}U+b|u|^{p-1}u+\gamma u=f, \text{ in } \Dr^\p\big((0,\infty)\times\Omega\big).
\end{gather}
\item
\label{rmkdefsol2}
The assumption~\eqref{defsol11} is made to have that any solution belong to $C\big([0,\infty);L^2(\Omega)\big).$ Indeed, we have by \cite[Lemma~A.4]{MR4053613} and \cite[Theorem~5.3]{MR4725781} that for any $q\in[0,\infty],$
\begin{gather*}
L^{q+1}_\loc\big([0,\infty);H^1_0(\Omega)\cap X_{m,p}\big)
\cap W^{1,\frac{q+1}q}_\loc\big([0,\infty);H^{-1}(\Omega)+X_{m,p}^\star\big)\inj C\big([0,\infty);L^2(\Omega)\big),
\end{gather*}
and then any strong solution belongs to $C\big([0,\infty);L^2(\Omega)\big).$ As a consequence, \eqref{fn} and \ref{defsol4} of Definition~\ref{defsol} make sense and any weak solution also belongs to $C\big([0,\infty);L^2(\Omega)\big).$
\item
\label{rmkdefsol3}
Assume $m=0.$ Let $(u,U)$ be a weak solution. It is clear from \eqref{gl0} that if $u$ is unique in $\Dr^\p\big((0,\infty)\times\Omega\big)$ then so is $U.$
\item
\label{rmkdefsol4}
We easily deduce from Definition~\ref{defsatsec} and \eqref{cv} that if $u$ is a weak solution then each term of equation \eqref{gl} belongs to $\Dr^\p\big((0,\infty)\times\Omega\big),$ except possibly $b|u|^{p-1}u.$ It follows that the existence of a strong solution implies the existence of a weak solution when $b=0.$ Otherwise, we have to show that $b|u|^{p-1}u\in\Dr^\p\big((0,\infty)\times\Omega\big).$
\end{enumerate}
\medskip
\end{rmk}

\noindent
The goal of the next result is to clarify in which way the weak solutions satisfy the equation~\eqref{gl} in the unsaturated case and without the presence of the superlinear term $(m\neq0$ and $b=0).$ This permits to give a result without assumption on the parameters $\theta,$ $a$ and $\gamma.$ Note that if $\theta=\pm\frac\pi2$ then equation \eqref{gl} becomes the nonlinear Schr\"{o}dinger equation and we recover the result by \cite{MR4503241} (Proposition~2.5). Nevertheless, in the presence of this superlinearity and in the unsaturated case $(b\neq0$ and $m\neq0),$ we may give a sense to the equation~\eqref{gl} better than in $\Dr^\p\big((0,\infty)\times\Omega\big)$ with the additional assumptions \eqref{t}, \eqref{a} and \eqref{g}. A weaker statement holds in the saturated case. See Theorems~\ref{thmweaksol} and \ref{thmweaksols} below.

\begin{prop}[\textbf{The unsaturated case}]
\label{propsolL2}
Let $\Omega\subseteq\R^N$ be an open subset, $\theta\in\left[-\frac\pi2,\frac\pi2\right],$ $0<m\le1,$ $a,\gamma\in\C,$ $b=0$ and $f\in L^1_\loc\big([0,\infty);L^2(\Omega)\big).$ Let $u$ be a weak solution to \eqref{gl}. Let $(u_n,f_n)_{n\in\N}$ be any sequence of $H^2$-strong solutions to~\eqref{gl}--\eqref{glb} satisfying~\eqref{cv}. Then,
\begin{gather}
\label{propsolL21}
u\in W^{1,1}_\loc\big([0,\infty);H^{-2}(\Omega)+L^\frac2m(\Omega)\big),
\end{gather}
and $u$ solves~\eqref{gl} in $L^1_\loc\big([0,\infty);H^{-2}(\Omega)+L^\frac2m(\Omega)\big)$ and so in $\Dr^\p\big((0,\infty)\times\Omega\big).$ In addition,
\begin{gather}
\label{propsolL22}
u_n\xrightarrow[n\to\infty]{W^{1,1}((0,T);H^{-2}(\Omega)+L^\frac2m(\Omega))}u.
\end{gather}
for any $T>0.$
\end{prop}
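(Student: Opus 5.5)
The approach is to solve the equation for the time derivative and pass to the limit in each remaining term. Each $u_n$ is an $H^2$-solution with right-hand side $f_n$ and $b=0$, so for almost every $t>0$ we can write
\begin{gather*}
\frac{\partial u_n}{\partial t}=\e^{\vi\theta}\Big(\Delta u_n-a|u_n|^{-(1-m)}u_n-\gamma u_n+f_n\Big).
\end{gather*}
We then show that every term on the right converges in $L^1\big((0,T);H^{-2}(\Omega)+L^\frac2m(\Omega)\big)$. Completeness of $W^{1,1}$ then gives \eqref{propsolL22}, and the limit equation holds in $L^1_\loc\big([0,\infty);H^{-2}(\Omega)+L^\frac2m(\Omega)\big)$. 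Since $H^{-2}(\Omega)+L^\frac2m(\Omega)\inj\Dr^\p(\Omega)$, it also holds in $\Dr^\p\big((0,\infty)\times\Omega\big)$.

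We treat the terms one at a time.
\begin{enumerate}
\item \emph{Laplacian.} The map $\Delta:L^2(\Omega)\to H^{-2}(\Omega)$ is bounded. Since $u_n\to u$ in $C([0,T];L^2(\Omega))$, we get $\Delta u_n\to\Delta u$ in $C([0,T];H^{-2}(\Omega))$, hence also in $L^1$.
\item \emph{Linear term.} Similarly $\gamma u_n\to\gamma u$ in $C([0,T];L^2(\Omega))\inj C([0,T];H^{-2}(\Omega))$. Here $\gamma$ is an arbitrary complex number, and no sign condition is needed.
\item \emph{Forcing.} By \eqref{cv}, $f_n\to f$ in $L^1((0,T);L^2(\Omega))$, and $L^2(\Omega)\inj H^{-2}(\Omega)$ continuously.
\item \emph{Nonlinear term.} Set $g(z)=|z|^{-(1-m)}z$, with $g(0)=0$ and $0<m\le1$. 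Then $|g(u)|^{2/m}=|u|^2$, so $g$ maps $L^2(\Omega)$ into $L^\frac2m(\Omega)$, with $\|g(u)\|_{L^{2/m}}=\|u\|_{L^2}^m$.
\end{enumerate}

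For the nonlinear term we use the Hölder continuity of $g$: there is $C$ such that $|g(z_1)-g(z_2)|\le C|z_1-z_2|^m$ for all $z_1,z_2\in\C$. This is standard for $m\in(0,1]$, and for $m=1$ it is trivial. It gives
\begin{gather*}
\|g(u_n(t))-g(u(t))\|_{L^{2/m}}\le C\|u_n(t)-u(t)\|_{L^2}^m.
\end{gather*}
Uniform convergence in $L^2$ then yields $g(u_n)\to g(u)$ in $C([0,T];L^\frac2m(\Omega))$. If the global Hölder estimate were doubtful, we would instead argue by a.e. convergence along subsequences plus Vitali, using the bound $|g(u_n)|^{2/m}=|u_n|^2$, which is uniformly integrable because $u_n\to u$ in $L^2$. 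The Hölder route is cleaner and also gives convergence of the whole sequence.

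The main obstacle is the identification step. Convergence of $\partial_tu_n$ in $L^1((0,T);E)$, with $E=H^{-2}(\Omega)+L^\frac2m(\Omega)$, together with $u_n\to u$ in $C([0,T];L^2(\Omega))\inj C([0,T];E)$, must imply that $u\in W^{1,1}((0,T);E)$ with $\partial_tu$ equal to the limit. We obtain this by passing to the limit in
\begin{gather*}
u_n(t)=u_n(0)+\int_0^t\partial_tu_n\,\d s,
\end{gather*}
which holds in $E$. This in turn requires $u_n\in W^{1,1}((0,T);E)$. That follows from \eqref{defsol11}, since $H^\star+X_{m,0}^\star$ embeds into $E$. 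Here $H^\star=L^2(\Omega)$, $L^{m+1}(\Omega)^\star=L^{\frac{m+1}m}(\Omega)$, and these are locally handled because the same identity for $\partial_tu_n$ shows it lies in $L^1(E)$.

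A further point needs care because $E$ is a sum space. It is a Banach space with the sum norm and embeds continuously into $\Dr^\p(\Omega)$, so the limit identifications are unambiguous. Finally, we check that the equation for $u$ is exactly the limiting identity
\begin{gather*}
\e^{-\vi\theta}\partial_tu=\Delta u-a\,g(u)-\gamma u+f
\end{gather*}
in $L^1((0,T);E)$ for every $T>0$. No assumption on $\theta$, $a$ or $\gamma$ is used anywhere, which is consistent with the generality of Proposition~\ref{propsolL2}.
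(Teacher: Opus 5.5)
Your proposal is correct and is essentially the paper's proof. The paper likewise isolates $\partial_t u_n$ through the equation and uses $\Delta u_n\to\Delta u$ in $C([0,T];H^{-2}(\Omega))$ together with $|u_n|^{-(1-m)}u_n\to|u|^{-(1-m)}u$ in $C([0,T];L^{\frac2m}(\Omega))$; it quotes these convergences from an earlier lemma, where you give the direct $m$-H\"older estimate, and then concludes that $(u_n)$ is Cauchy in $W^{1,1}\big((0,T);H^{-2}(\Omega)+L^{\frac2m}(\Omega)\big)$ and passes to the limit in the equation, with your identification of the limit via $u_n(t)=u_n(0)+\int_0^t\partial_t u_n\,\mathrm{d}s$ only making explicit what the paper leaves implicit.
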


\begin{prop}[\textbf{Uniqueness and continuous dependance}]
\label{propdep}
Let Assumption~$\ref{ass}$ be fulfilled, let $f,\wt f\in L^1_\loc\big([0,\infty);L^2(\Omega)\big)$ and $X=H^1_0(\Omega)\cap L^{m+1}(\Omega)\cap L^{p+1}(\Omega).$ Finally, let $1\le q\le\infty$ and let
\begin{gather}
\label{propdep1}
u,\wt u\in L^q_\loc\big([0,\infty);X\big)\cap W^{1,q^\p}_\loc\big([0,\infty);X^\star\big)\inj C\big([0,\infty);L^2(\Omega)\big),
\end{gather}
be solutions in $\Dr^\p\big((0,\infty)\times\Omega\big)$ to,
\begin{gather*}
\e^{-\vi\theta}u_t-\Delta u+a|u|^{-(1-m)}u+b|u|^{p-1}u+\gamma u=f,						\\
\e^{-\vi\theta}\wt {u_t}-\Delta\wt u+a|\wt u|^{-(1-m)}\wt u+b|\wt u|^{p-1}\wt u+\gamma\wt u=\wt f ,
\end{gather*}
respectively. Then,
\begin{gather}
\label{propdep2}
\|u(t)-\wt u(t)\|_{L^2(\Omega)}\le\|u(s)-\wt u(s)\|_{L^2(\Omega)}+\vint_s^t\|f(\sigma)-\wt f(\sigma)\|_{L^2(\Omega)}\d\sigma,
\end{gather}
for any $t\ge s\ge0.$ Finally,~\eqref{propdep2} also holds true for the weak solutions.
\end{prop}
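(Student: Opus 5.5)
Set $w=u-\wt u$ and $g=f-\wt f$, and multiply the difference of the two equations by $\e^{\vi\theta}$:
\begin{gather*}
w_t=\e^{\vi\theta}\Big(\Delta w-a\big(|u|^{-(1-m)}u-|\wt u|^{-(1-m)}\wt u\big)-b\big(|u|^{p-1}u-|\wt u|^{p-1}\wt u\big)-\gamma w+g\Big).
\end{gather*}
By \eqref{propdep1} the pair $(w,w_t)$ lies in $L^q_\loc\big([0,\infty);X\big)\times L^{q^\p}_\loc\big([0,\infty);X^\star\big)$. The standard Lions--Magenes type lemma (\cite[Lemma~A.4]{MR4053613}, as in Remark~\ref{rmkdefsol}) then shows that $t\mapsto\|w(t)\|_{L^2}^2$ is absolutely continuous, with
\begin{gather*}
\frac12\frac{\d}{\d t}\|w(t)\|_{L^2(\Omega)}^2=\Re\langle w_t,w\rangle_{X^\star,X}\quad\text{for a.e. } t.
\end{gather*}
A first point to check is that each term of the equation defines an element of $X^\star$. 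We have $|u|^{-(1-m)}u\in L^{\frac{m+1}m}\subset X^\star$ (and $U\in L^\infty$ when $m=0$), $|u|^{p-1}u\in L^{\frac{p+1}p}$, and $\Delta u\in H^{-1}$. This identification is what lets the distributional equation be read in $L^{q^\p}_\loc(X^\star)$.

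We then pair the equation with $w$ and take real parts term by term.
\begin{itemize}
\item The Laplacian term gives $-\cos\theta\,\|\nabla w\|_{L^2}^2\le0$, since $|\theta|<\frac\pi2$.
\item The linear term gives $-\Re(\gamma\e^{\vi\theta})\|w\|_{L^2}^2\le0$ by \eqref{g}.
\item The source term is bounded by $\|g\|_{L^2}\|w\|_{L^2}$.
\item For the nonlinear terms we need the pointwise inequality
\begin{gather*}
\Re\Big(z\e^{\vi\theta}\big(|u|^{r-1}u-|v|^{r-1}v\big)\overline{(u-v)}\Big)\ge0,\qquad u,v\in\C,\ z\in C_\theta(r).
\end{gather*}
This is the main obstacle. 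The condition $2\sqrt r\,\Re(z\e^{\vi\theta})\ge|1-r|\,|\Im(z\e^{\vi\theta})|$ is exactly the sharp condition for the monotonicity inequality of Liskevich--Perelmuter / Okazawa--Yokota type,
\begin{gather*}
\big|\Im\big((|u|^{r-1}u-|v|^{r-1}v)\overline{(u-v)}\big)\big|\le\frac{|1-r|}{2\sqrt r}\,\Re\big((|u|^{r-1}u-|v|^{r-1}v)\overline{(u-v)}\big).
\end{gather*}
We apply it with $r=m$, $z=a$, and with $r=p$, $z=b$. I would prove it by reducing to $v=1$, $u=\rho\e^{\vi\phi}$ and minimizing in $\phi$. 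This is presumably the same computation as in \cite{MR4098330,MR4725781}, with $a$ replaced by $a\e^{\vi\theta}$.
\item When $m=0$ the corresponding term is $\Re\big(\mu(U-\wt U)\overline{(u-\wt u)}\big)$ by \eqref{gl0'}. It is nonnegative because $|U|,|\wt U|\le1$ and $U\bar u=|u|$, so $\Re(U\bar u-U\bar{\wt u}-\wt U\bar u+\wt U\bar{\wt u})\ge|u|-|\wt u|-|u|+|\wt u|=0$.
\end{itemize}

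Putting these together gives $\frac12\frac{\d}{\d t}\|w\|_{L^2}^2\le\|g\|_{L^2}\|w\|_{L^2}$. Integrating this inequality in the usual way (apply it to $\sqrt{\|w\|_{L^2}^2+\eps}$ and let $\eps\to0$) gives \eqref{propdep2} for all $t\ge s\ge0$.

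For weak solutions, take the approximating sequences $(u_n,f_n)$ and $(\wt u_n,\wt f_n)$ of $H^2$-solutions from Definition~\ref{defsol}. Each $H^2$-solution satisfies \eqref{propdep1} with $q=m+1$ or $q=p+1$ by \eqref{defsol11}, up to checking that $H^\star+X_{m,p}^\star\subset X^\star$ when $H=L^2$. So \eqref{propdep2} holds for $u_n-\wt u_n$. Passing to the limit using \eqref{cv}, i.e. convergence in $C([0,T];L^2)$ and in $L^1((0,T);L^2)$, yields \eqref{propdep2} for $u-\wt u$.
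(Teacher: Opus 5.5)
Your proposal is correct and follows essentially the same route as the paper. You subtract the equations, pair with $\e^{-\vi\theta}(u-\wt u)$ in the $X^\star$--$X$ duality, and use the Lions--Magenes-type derivative identity. You then apply the monotonicity of Lemma~\ref{lemg} to the $a$- and $b$-terms, integrate over $(s,t)$, and pass to the limit via \eqref{cv} for weak solutions. For the saturated term when $m=0$, your direct pointwise check replaces the paper's citation of \cite[Lemma~6.1]{MR4725781}.

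The one point to tighten concerns the endpoint exponents $q\in\{1,\infty\}$ allowed in the statement. There, the embedding and the derivative identity come from \cite[Theorem~5.3]{MR4725781}, as the paper uses, rather than from \cite[Lemma~A.4]{MR4053613}, which only covers $1<q<\infty$.
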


\begin{rmk}
\label{rmkpropdep}
If $m=0$ then it is understood in Proposition~\ref{propdep} that $|u|^{-(1-m)}u=U,$ where $U$ is a saturated section associated to $u.$
\end{rmk}

\begin{thm}[\textbf{Existence and uniqueness of $\bs{L^2}$-solutions}]
\label{thmweak}
Let Assumption~$\ref{ass}$ be fulfilled and let $f\in L^1_\loc\big([0,\infty);L^2(\Omega)\big).$ For any $u_0\in L^2(\Omega),$ there exists a unique weak solution to \eqref{gl}--\eqref{u0}.
\end{thm}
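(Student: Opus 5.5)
Uniqueness follows directly from Proposition~\ref{propdep}. If $u$ and $\wt u$ are two weak solutions with the same $u_0$ and the same $f$, then \eqref{propdep2} with $s=0$ gives $\|u(t)-\wt u(t)\|_{L^2(\Omega)}\le0$ for every $t\ge0$. So the whole difficulty lies in existence. For this I will follow the Brezis theory of maximal monotone operators on $H=L^2(\Omega)$, as announced in the introduction.

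\textbf{Step 1: the operator.} Multiply \eqref{gl} by $\e^{\vi\theta}$ to write it as $u_t+Au=\e^{\vi\theta}f$. Here
\[
Au=\e^{\vi\theta}\bigl(-\Delta u+a|u|^{-(1-m)}u+b|u|^{p-1}u+\gamma u\bigr),
\]
with domain
\[
D(A)=\bigl\{u\in H^1_0(\Omega)\cap X_{m,p};\ Au\in L^2(\Omega)\bigr\}.
\]
When $m=0$, $A$ is multivalued: $|u|^{-1}u$ is replaced by any saturated section $U$. We regard $L^2(\Omega;\C)$ as a real Hilbert space with the inner product $\Re\int u\ovl v$.

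\textbf{Step 2: monotonicity.} For $-\e^{\vi\theta}\Delta$ we have $\Re\bigl(\e^{\vi\theta}\int|\nabla w|^2\bigr)=\cos\theta\,\|\nabla w\|_{L^2}^2\ge0$, since $|\theta|<\frac\pi2$. The $\gamma$ term is monotone by \eqref{g}. For the nonlinear terms we need the pointwise inequality
\[
\Re\Bigl(z\e^{\vi\theta}\bigl(|u|^{q-1}u-|v|^{q-1}v\bigr)\ovl{(u-v)}\Bigr)\ge0
\quad\text{for } z\in C_\theta(q).
\]
This is the known characterization used for the Schr\"odinger case (the sets $C(m)$ in the cited papers), applied to $z\e^{\vi\theta}$ in place of $-\vi z$. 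When $m=0$ the inequality follows from $a\e^{\vi\theta}=\mu>0$ and $\Re\bigl((U-V)\ovl{(u-v)}\bigr)\ge0$, which holds for saturated sections.

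\textbf{Step 3: range condition (the main obstacle).} We must show $R(I+A)=L^2(\Omega)$ on an arbitrary, possibly unbounded, domain. I would first solve $u+Au=g$ variationally in $H^1_0\cap X_{m,p}$: the associated operator $H^1_0\cap X_{m,p}\to(H^1_0\cap X_{m,p})^\star$ is monotone, hemicontinuous and coercive, so the Minty--Browder theorem applies. For $m=0$ I would instead regularize $U$ by $u/(|u|^2+\eps)^{1/2}$ and pass to the limit using weak-$\star$ compactness in $L^\infty$ together with monotonicity. The delicate point is to show that $Au\in L^2$, that is $\Delta u\in L^2$, without any boundedness of $\Omega$. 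Assuming $g\in L^2$ gives only $\Delta u\in H^{-1}+X^\star$. I would first try to obtain $L^2$ bounds on each term separately by testing with $|u|^{q-1}u$ (suitably truncated) and using the sharp form of the condition $a\in C_\theta(m)$, which controls the cross term $\Re\bigl(\e^{\vi\theta}\int\nabla u\cdot\nabla\ovl{(|u|^{q-1}u)}\bigr)\ge0$. This yields $\|a|u|^{-(1-m)}u\|_{L^2}\lesssim\|g\|_{L^2}$, and the analogous bound holds for $b|u|^{p-1}u$, as in the Okazawa--Yokota approach.

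\textbf{Step 4: solving the evolution problem.} Brezis' theorem now gives, for $u_0\in D(A)$ and $f\in W^{1,1}_\loc([0,\infty);L^2)$, a strong solution with $u\in W^{1,\infty}_\loc([0,\infty);L^2)$ and $Au\in L^\infty_\loc([0,\infty);L^2)$. Testing with $u$ gives $u\in L^2_\loc([0,\infty);H^1_0)$ and $u\in L^{q+1}_\loc([0,\infty);L^{q+1})$, so it is an $H^2$-solution in the sense of Definition~\ref{defsol}. For $m=0$, the section $U$ is obtained from the multivalued minimal section. For general data $u_0\in L^2=\ovl{D(A)}$ and $f\in L^1_\loc(L^2)$, I approximate them by $u_{0,n}\in D(A)$ and smooth $f_n$. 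The contraction estimate \eqref{propdep2} makes $(u_n)$ Cauchy in $C([0,T];L^2)$, which produces $u$ satisfying \eqref{fn}--\eqref{cv}, that is, a weak solution.
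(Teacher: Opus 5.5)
Your overall architecture (uniqueness from Proposition~\ref{propdep}, maximal monotonicity in $L^2(\Omega)$, Brezis' theorem for regular data, then density plus the contraction estimate) is the paper's. But Step~3 has a genuine gap exactly where the difficulty lies, namely the \emph{separate} bound $\Delta u\in L^2(\Omega)$. The inequality you rely on, $\Re\bigl(\e^{\vi\theta}\int_\Omega\nabla u\cdot\ovl{\nabla(|u|^{q-1}u)}\,\d x\bigr)\ge0$, cannot follow from $a\in C_\theta(m)$, because $a$ does not appear in it. By Lemma~\ref{lemeneregint}, $I=\int_\Omega\nabla u\cdot\ovl{\nabla(|u|^{q-1}u)}\,\d x$ only satisfies $|1-q|\Re I\ge2\sqrt q\,|\Im I|$. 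This is sharp: for $q>0$ equality holds when $u=\rho\,\e^{\pm\vi\sqrt q\log\rho}$. So $\Re(\e^{\vi\theta}I)\ge0$ for all $u$ would require $2\sqrt q\cos\theta\ge|1-q|\,|\sin\theta|$, i.e.\ $1\in C_\theta(q)$, which Assumption~\ref{ass} does not provide. For instance, take $a=\e^{-\vi\theta}\in C_\theta(m)$ with $m$ small and $\theta$ close to $\frac\pi2$, or $q=p$ large and $\theta\neq0$. With the multiplier $\e^{\vi\theta}$ this term therefore has no sign, and bounding it by $\|\Delta u\|_{L^2}\|g(u)\|_{L^2}$ presupposes the very estimate you want. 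What $a\in C_\theta(m)$ and $b\in C_\theta(p)$ actually control is the \emph{other} cross term, $\Re\bigl(a\e^{\vi\theta}\int_\Omega g(u)\,\ovl{(-\Delta u)}\,\d x\bigr)\ge0$ (Lemma~\ref{lemenereg}), which arises when one tests with $-\Delta u$. That is the order used in the proof of Proposition~\ref{propAmax}. First test with $-\Delta u_\eps$ to get $\|\Delta u_\eps\|_{L^2}\le\|F\|_{L^2}/\cos\theta$. Then test with $g_\eps^m(u_\eps)$ and handle the Laplacian cross term by Cauchy--Schwarz. Finally read off $h^p_{\eps^{-1}}(u_\eps)$ from the equation.

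Second, even with the right multiplier, pairing with $-\Delta u$ is not legitimate for your Minty--Browder solution, which only has $\Delta u\in H^{-1}(\Omega)+X_{m,p}^\star$. This is why the paper regularizes the nonlinearities instead of solving the singular problem directly. The maps $g_\eps^m$ and $h^p_{\eps^{-1}}$ are globally Lipschitz and monotone on $L^2(\Omega)$, so $A_\eps^m=L+B_\eps^m$ is maximal monotone with domain $D(L)$ (Proposition~\ref{propAe}). Hence $u_\eps\in D(L)$ and $g_\eps^m(u_\eps)\in H^1_0(\Omega)$ (Lemma~\ref{lemge}), so the above computations are rigorous. The uniform bounds together with a.e.\ convergence then give $u\in D(A_0^m)=D(L)\cap L^{2m}(\Omega)\cap L^{2p}(\Omega)$ with $(I+A_0^m)u\ni F$.

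Relatedly, your domain $\{u;\ Au\in L^2(\Omega)\}$ makes the range condition trivial, since $Au=g-u$. But Brezis' theorem then only yields $Au(t)\in L^2(\Omega)$, not $\Delta u(t)\in L^2(\Omega)$. So your $u$ is not an $H^2$-solution in the sense of Definition~\ref{defsol} and cannot serve as an approximant. What is needed is $D(A)\subset D(L)\cap L^{2m}(\Omega)\cap L^{2p}(\Omega)$, which is precisely the missing estimate.

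Your multiplier idea can be repaired, but by a different argument from the one you wrote. Pair with $\lambda\,g_\eps^m(u)$, respectively $\lambda\,h_M^p(u)$, where $\Re\lambda>0$ and $2\sqrt q\,\Re\lambda\ge|1-q|\,|\Im\lambda|$ with $q=m$, respectively $q=p$, and where $\Re(\lambda a)>0$, respectively $\Re(\lambda b)>0$. Such a $\lambda$ exists because $|\theta|<\frac\pi2$. The remaining terms are then absorbed using the energy bounds in $L^{m+1}\cap L^{p+1}$.

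Finally, after the Cauchy argument the paper also invokes Lemma~\ref{lemint}. It uses this lemma to show that the limit satisfies the equation in $\Dr^\p\big((0,\infty)\times\Omega\big)$; for $b\neq0$ this needs the uniform $L^{p+1}$ bounds from the energy identity (cf.\ Remark~\ref{rmkdefsol}). The same lemma constructs the saturated section $U$ when $m=0$. Your Step~4 omits this.
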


\begin{thm}[\textbf{Properties of the $\bs{L^2}$-solutions}]
\label{thmweaksol}
Let Assumption~$\ref{ass}$ be fulfilled, let $u_0\in L^2(\Omega),$ $f\in L^2_\loc\big([0,\infty);L^2(\Omega)\big)$ and $u$ be the unique weak solution to~\eqref{gl}--\eqref{u0} given by Theorem~$\ref{thmweak}.$ Then,
\begin{gather}
\label{H10}
u\in L^2_\loc\big([0,\infty);H^1_0(\Omega)\big),	\\
\label{Lm}
u\in L^{m+1}_\loc\big([0,\infty);L^{m+1}(\Omega)\big)\cap L^{p+1}_\loc\big([0,\infty);L^{p+1}(\Omega)\big),	
\end{gather}
and for any $t\ge s\ge0,$
\begin{gather}
\label{L2+}
\begin{split}
&\dfrac12\|u(t)\|_{L^2(\Omega)}^2+\cos\theta\vint_s^t\|\nabla u(\sigma)\|_{L^2(\Omega)}^2\d\sigma
+\Re(a\e^{\vi\theta})\vint_s^t\|u(\sigma)\|_{L^{m+1}(\Omega)}^{m+1}\d\sigma+\Re(b\e^{\vi\theta})\vint_s^t\|u(\sigma)\|_{L^{p+1}(\Omega)}^{p+1}\d\sigma \\
&+\Re(\gamma\e^{\vi\theta})\vint_s^t\|u(\sigma)\|_{L^2(\Omega)}^2\d\sigma		
\le\dfrac12\|u(s)\|_{L^2(\Omega)}^2+\Re\left(\e^{\vi\theta}\iint\limits_{s\;\Omega}^{\text{}\;\;t}f(\sigma,x)\,\ovl{u(\sigma,x)}\,\d x\,\d\sigma\right),
\end{split}
\end{gather}
Finally, setting $Y_{m,p}=H^1_0(\Omega)\cap L^{m+1}(\Omega)\cap L^{p+1}(\Omega),$ if $m>0$ and if $(u_n,f_n)_{n\in\N}$ is any sequence of $H^2$-strong solutions to~\eqref{gl}--\eqref{glb} satisfying~\eqref{cv} then,
\begin{gather}
\label{W11}
u\in W^{1,1}_\loc\big([0,\infty);Y_{m,p}^\star\big),	\\
\label{conL1w}
\frac{\partial u_n}{\partial t}\overset{L^1_\loc([0,\infty);Y_{m,p}^\star)_\w}
{\underset{n\tends\infty}{-\!\!\!-\!\!\!-\!\!\!-\!\!\!-\!\!\!-\!\!\!-\!\!\!-\!\!\!-\!\!\!-\!\!\!-\!\!\!-\!\!\!\weak}}\frac{\partial u}{\partial t},	\\
\label{senseq}
u \text{ satisfies~\eqref{gl} in } L^1_\loc\big([0,\infty);Y_{m,p}^\star\big)\inj\Dr^\p\big((0,\infty)\times\Omega\big).
\end{gather}
\end{thm}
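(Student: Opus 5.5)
Because $f\in L^2_\loc([0,\infty);L^2(\Omega))$, the natural route is to prove everything first for $H^2$-solutions and then pass to the limit along an approximating sequence $(u_n,f_n)$ satisfying \eqref{cv}, with $f_n$ chosen bounded in $L^2((0,T);L^2(\Omega))$.

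\emph{Step 1: energy identity for strong solutions.} For an $H^2$-solution $u_n$, take the $L^2$ duality of \eqref{gl} with $\e^{\vi\theta}\ovl{u_n}$ and keep the real part. Then
\[
\Re\left(\e^{\vi\theta}\bigl(\e^{-\vi\theta}u_n',u_n\bigr)\right)=\frac12\frac{\d}{\d t}\|u_n\|_{L^2(\Omega)}^2,
\]
justified by the regularity \eqref{defsol11} and the embedding in Remark~\ref{rmkdefsol}. Likewise $\Re\bigl(\e^{\vi\theta}(-\Delta u_n,u_n)\bigr)=\cos\theta\|\nabla u_n\|^2$, and the nonlinear and linear terms give $\Re(a\e^{\vi\theta})\|u_n\|_{L^{m+1}}^{m+1}$, $\Re(b\e^{\vi\theta})\|u_n\|_{L^{p+1}}^{p+1}$ and $\Re(\gamma\e^{\vi\theta})\|u_n\|_{L^2}^2$. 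When $m=0$, use $\Re(U\ovl u)=|u|$. Integrating from $s$ to $t$ gives \eqref{L2+} with equality for $(u_n,f_n)$.

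\emph{Step 2: uniform bounds.} Note that $\cos\theta>0$ by \eqref{t}, and that $\Re(a\e^{\vi\theta})>0$ and $\Re(b\e^{\vi\theta})\ge0$ by \eqref{a}. The case $b\in C_\theta(p)$ gives $\Re(b\e^{\vi\theta})>0$; the case $b=0$ makes that term vanish. Combining Step 1 with Gronwall/Young, $(u_n)$ is bounded in $L^2((0,T);H^1_0)$, in $L^{m+1}((0,T);L^{m+1})$ and, when $b\neq0$, in $L^{p+1}((0,T);L^{p+1})$. Weak compactness and the strong convergence $u_n\to u$ in $C([0,T];L^2)$ identify the weak limits with $u$, which proves \eqref{H10}--\eqref{Lm}. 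For \eqref{L2+} we pass to the limit in the identity. The left-hand side is handled by weak lower semicontinuity of the norms together with $\|u_n(t)\|_{L^2}\to\|u(t)\|_{L^2}$. For $m<1$, $L^{m+1}$ is not reflexive, so we use Fatou's lemma along an a.e.\ convergent subsequence in $(0,T)\times\Omega$ instead. The right-hand side converges because $f_n\to f$ in $L^1L^2$ and $u_n\to u$ in $CL^2$.

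\emph{Step 3: time derivative and the equation when $m>0$.} Rewrite \eqref{gl} as
\[
u_n'=\e^{\vi\theta}\bigl(\Delta u_n-a|u_n|^{-(1-m)}u_n-b|u_n|^{p-1}u_n-\gamma u_n+f_n\bigr)
\]
and bound each term in $L^1((0,T);Y_{m,p}^\star)$. By Step 2, $\Delta u_n$ is bounded in $L^2H^{-1}$, $|u_n|^{p-1}u_n$ in $L^{\frac{p+1}p}L^{\frac{p+1}p}$, $\gamma u_n$ and $f_n$ in $L^1L^2$, and $|u_n|^{-(1-m)}u_n$ in $L^{\frac{m+1}m}L^{\frac{m+1}m}$, which embeds in $L^{m+1}(\Omega)^\star$ since $m>0$. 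To obtain \eqref{conL1w} we need weak convergence in $L^1$, hence equi-integrability and not only boundedness. The first four terms are bounded in $L^r$ with $r>1$ in time, so they are equi-integrable. Each nonlinear term converges a.e.\ and is bounded in a reflexive space, so its weak limit is $|u|^{-(1-m)}u$ (respectively $|u|^{p-1}u$). The limit equation then holds in $L^1_\loc(Y_{m,p}^\star)$, and since $u_n\to u$ in $\Dr^\p$, the weak limit of $u_n'$ equals $u'$. This gives \eqref{W11} and \eqref{senseq}.

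The main obstacle I expect is Step 2 for $m<1$ and the unbounded-domain setting. There we lack compact embeddings, so identifying the limits of the nonlinear terms and of the $L^{m+1}$ norms must rely on a.e.\ convergence extracted from the $C([0,T];L^2)$ convergence rather than on compactness.
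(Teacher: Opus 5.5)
Your proposal is correct and follows essentially the paper's route: the energy identity for $H^2$-solutions, uniform bounds, a.e.\ convergence combined with Fatou and weak compactness, and then passing to the limit in the equation in $L^1((0,T);Y_{m,p}^\star)$. The one difference is that you get \eqref{L2+} on $[s,t]$ directly from the strong-solution identity on $[s,t]$, whereas the paper proves it for $s=0$ (Lemma~\ref{lemint}) and then uses time translation and uniqueness of weak solutions. There are also three harmless slips: $L^{m+1}(\Omega)$ is reflexive for $m\in(0,1)$ (only $m=0$ is non-reflexive, and Fatou works in every case); in the last part the sequence is arbitrary, so you cannot choose $f_n$ bounded in $L^2L^2$, though the strong convergence $f_n\to f$ in $L^1((0,T);L^2(\Omega))$ suffices; and convergence of the whole sequence needs the usual subsequence-of-subsequence argument with the uniquely identified a.e.\ limits.
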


\noindent
Due to the lack of separability of $L^\infty(\Omega),$ we encounter some difficulties about the measurability of the saturated nonlinearity $U:[0,\infty)\tends L^\infty(\Omega).$ To avoid this, we introduce a sequence $(Y_n)_{n\in\N_0}$ of $L^1$-approximating sequence of RNP-spaces: $(Y_n)_{n\in\N_0}$ is a sequence of Banach spaces such that for any $n\in\N_0,$ $\Dr(\Omega)\inj Y_n\inj Y_{n+1}\inj L^1(\Omega)$ with dense embeddings, $Y_n$ is separable and reflexive when $n\ge1,$ $Y_n\neq Y_{n+1}\neq L^1(\Omega)$ and for any $f\in Y_0,$ $\|f\|_{L^1(\Omega)}\le\|f\|_{Y_n}$ and $\vlim_{n\to\infty}\|f\|_{Y_n}=\|f\|_{L^1(\Omega)}.$ This sequence exists with the help of \cite[Lemma~5.8]{MR4725781}. For more details, see \cite[Section~5]{MR4725781}.

\begin{thm}[\textbf{The saturated case}]
\label{thmweaksols}
Let Assumption~$\ref{ass}$ be fulfilled with $m=0,$ let $u_0\in L^2(\Omega),$ let $f\in L^2_\loc\big([0,\infty);L^2(\Omega)\big)$ and let $(u,U)$ be the unique weak solution to~\eqref{gl}--\eqref{u0} given by Theorem~$\ref{thmweak}.$ Let $(Y_\ell)_{\ell\in\N_0}$ be any $L^1$-approximating sequence of RNP-spaces. Finally, let $(u_n,U_n,f_n)_{n\in\N}$ be any sequence of $H^2$-strong solutions to~\eqref{gl}--\eqref{glb} satisfying~\eqref{cv}. Then for any $\ell\in\N_0,$
\begin{gather}
\label{W11l}
u\in W^{1,1}_\loc\big([0,\infty);Z_{\ell,p}^\star\big),	\\
\label{conL1lw}
\frac{\partial u_n}{\partial t}\overset{L^1((0,T);Z_{\ell,p}^\star)_\w}
{\underset{n\tends\infty}{-\!\!\!-\!\!\!-\!\!\!-\!\!\!-\!\!\!-\!\!\!-\!\!\!-\!\!\!-\!\!\!-\!\!\!-\!\!\!-\!\!\!\weak}}\frac{\partial u}{\partial t},	\\
\label{senseqs}
(u,U) \text{ satisfies~\eqref{gl} in } L^1_\loc\big([0,\infty);Z_{\ell,p}^\star\big)\inj\Dr^\p\big((0,\infty)\times\Omega\big),
\end{gather}
for any $T>0,$ where $Z_{\ell,p}=H^1_0(\Omega)\cap Y_\ell\cap L^{p+1}(\Omega).$
In addition,
\begin{gather}
\label{thmweaksols1}
U_n\underset{n\to\infty}{\overset{L^\infty((0,T)\times\Omega)_{\w\star}}{-\!\!\!-\!\!\!-\!\!\!-\!\!\!-\!\!\!-\!\!\!-\!\!\!-\!\!\!-\!\!\!-\!\!\!-\!\!\!\weak}}U,
\end{gather}
for any $T>0.$ Finally there exists $N_0\subset(0,\infty)$ with $|N_0|=0$ such that for any $t\in(0,\infty)\cap N_0^\co,$
\begin{gather}
\label{thmweaksols2}
u^\p(t)\in H^{-1}(\Omega)+L^\infty(\Omega)+L^\frac{p+1}p(\Omega).
\end{gather}
In particular, equation~\eqref{gl} makes sense in $H^{-1}(\Omega)+L^\infty(\Omega)+L^\frac{p+1}p(\Omega),$ for almost every $t>0.$
\end{thm}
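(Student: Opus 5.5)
Theorem~\ref{thmweaksols} is the saturated counterpart of the last part of Theorem~\ref{thmweaksol}, and the plan follows the same route. The only new ingredient is that the term $aU_n$ is merely bounded in $L^\infty$. That is why $L^{m+1}(\Omega)$, which would be $L^1(\Omega)$ when $m=0$, is replaced by the reflexive separable spaces $Y_\ell$.

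\textbf{Step 1: uniform bounds.} I apply the energy inequality \eqref{L2+} to $(u_n,f_n)$; it holds for $H^2$-solutions by a direct computation. Using \eqref{cv} and Assumption~\ref{ass} (including $\cos\theta>0$), this gives, for each $T>0$, bounds uniform in $n$ for:
\begin{itemize}
\item $u_n$ in $L^2((0,T);H^1_0(\Omega))$;
\item $u_n$ in $L^{p+1}((0,T);L^{p+1}(\Omega))$ when $b\neq0$;
\item $\Re(a\e^{\vi\theta})\|u_n\|_{L^1((0,T);L^1(\Omega))}$.
\end{itemize}
Since $\|U_n\|_{L^\infty}\le1$, I extract a subsequence with $U_n\weak V$ weakly$\star$ in $L^\infty((0,T)\times\Omega)$, diagonally in $T$.

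From the equation,
\[
\e^{-\vi\theta}u_n'=f_n+\Delta u_n-aU_n-b|u_n|^{p-1}u_n-\gamma u_n .
\]
Each term on the right is bounded in $L^1((0,T);Z_{\ell,p}^\star)$. Indeed:
\begin{itemize}
\item $\Delta u_n$ is bounded in $L^2(H^{-1})$;
\item $U_n$ is bounded in $L^\infty(L^\infty)\subset L^\infty(Y_\ell^\star)$, because $\|\cdot\|_{L^1}\le\|\cdot\|_{Y_\ell}$;
\item $|u_n|^{p-1}u_n$ is bounded in $L^{\frac{p+1}p}(L^{\frac{p+1}p})$;
\item $f_n$ converges in $L^1(L^2)$.
\end{itemize}
An $L^1$ bound alone does not give weak compactness. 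However, the terms other than $f_n$ are bounded in reflexive spaces $L^r$ with $r>1$, or in $L^\infty$, and $f_n$ converges strongly. Hence the family $\{u_n'\}$ is equi-integrable, and by the Dunford--Pettis criterion for Bochner spaces with reflexive dual $Z_{\ell,p}^\star$ (this is where reflexivity of $Y_\ell$ matters), it is relatively weakly compact in $L^1((0,T);Z_{\ell,p}^\star)$.

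\textbf{Step 2: identifying limits.} Since $u_n\to u$ in $C([0,T];L^2)$, the weak limit of $u_n'$ is $u'$ in the distributional sense. This gives \eqref{W11l} and \eqref{conL1lw} along the full sequence, because the limit is unique. I pass to the limit in the equation tested against $Z_{\ell,p}$-valued test functions. The nonlinear term $|u_n|^{p-1}u_n$ converges a.e. along a subsequence and is bounded in $L^{\frac{p+1}p}$, so it converges weakly to $|u|^{p-1}u$. This yields \eqref{senseqs} with $V$ in place of $U$.

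\textbf{Step 3: $V$ is a saturated section (main obstacle).} Clearly $\|V\|_\infty\le1$. On the set $\{u\neq0\}$, a.e. convergence $u_n\to u$ (after a subsequence) gives $U_n=u_n/|u_n|\to u/|u|$ a.e. there. By dominated convergence, the weak$\star$ limit on that set is $u/|u|$. Hence $V$ is a saturated section associated to $u$, and $(u,V)$ is a weak solution.

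The uniqueness part of Remark~\ref{rmkdefsol} gives $V=U$ as distributions, hence in $L^\infty$. This is where the uniqueness of $U$ from Theorem~\ref{thmweak} is invoked. Since the limit is independent of the subsequence, the whole sequence converges, which proves \eqref{thmweaksols1}. The delicate points are the measurability of $U$ in time as an $L^\infty$-valued map and the Dunford--Pettis compactness in Step 1. To handle them, I work with $U\in L^\infty((0,T)\times\Omega)$ viewed as an element of $L^\infty((0,T);Y_\ell^\star)$, which is measurable thanks to separability of $Y_\ell$, following \cite[Section~5]{MR4725781}.

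\textbf{Step 4: pointwise-in-time statement.} Finally, \eqref{thmweaksols2} follows from the equation:
\[
u'=\e^{\vi\theta}\bigl(f+\Delta u-aU-b|u|^{p-1}u-\gamma u\bigr).
\]
For a.e. $t$, each term lies in the indicated space: $f(t)\in L^2$, $\gamma u(t)\in L^2$ (both absorbed into $H^{-1}+L^\infty+\dots$ after using $u(t)\in H^1_0$), $\Delta u(t)\in H^{-1}$, $U(t)\in L^\infty$, and $|u|^{p-1}u(t)\in L^{\frac{p+1}p}$. The null set $N_0$ is the union of the exceptional sets of these finitely many conditions.
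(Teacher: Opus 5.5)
Your proof is correct, but it obtains the compactness in Step~1 differently from the paper, and you do not need that step.

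\textbf{The paper's route.} The paper never uses a Dunford--Pettis criterion. In Lemma~\ref{lemint} it first proves \eqref{lemint4} for the \emph{whole} sequence. It expresses $U_n-U$, through the equation, as a combination of the differences of the other terms. All of these tend to $0$ in $\Dr^\p$, and the bound $\|U_n\|_{L^\infty}\le1$ upgrades this to weak$\star$ convergence. In the proof of Theorem~\ref{thmweaksols} it transfers this to weak convergence in $L^1((0,T);Y_\ell^\star)$ via the identification $L^\infty((0,T)\times\Omega)\inj L^1((0,T);Y_\ell)^\star\cong L^\infty((0,T);Y_\ell^\star)$. It then gets \eqref{conL1lw} directly: it tests $\e^{-\vi\theta}\partial_tu_n$ against $v\in L^\infty((0,T);Z_{\ell,p})\cong L^1((0,T);Z_{\ell,p}^\star)^\star$ and passes to the limit term by term.

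\textbf{Comparison with your route.} Your subsequence argument, using that $U$ is determined by $u$ through the equation, is equivalent to the paper's first step. Your Step~2 already contains the term-by-term limit. Along the extracted subsequence, each term on the right-hand side converges weakly in $L^1((0,T);Z_{\ell,p}^\star)$, hence so does $\partial_tu_n$. The subsequence principle then gives the full sequence. So the equi-integrability and vector-valued Dunford--Pettis argument in Step~1 is valid (for $\ell\ge1$) but redundant. It makes the argument self-contained relative to Lemma~\ref{lemint}, at the price of relying on reflexivity of $Z_{\ell,p}^\star$.

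\textbf{The case $\ell=0$.} You use reflexivity of $Z_{\ell,p}^\star$ and separability of $Y_\ell$ (the latter for the measurability of $U$ as a $Y_\ell^\star$-valued map). Both are only guaranteed for $\ell\ge1$, since $Y_0$ is not assumed reflexive or separable. So, as written, your argument does not cover $\ell=0$. Deduce that case from $\ell=1$ through the continuous embedding $Z_{1,p}^\star\inj Z_{0,p}^\star$, which holds because $Y_0\inj Y_1$ densely; this is what the paper does.
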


\begin{rmk}
\label{rmkpropsolL2}
Below are some comments about Theorem~\ref{thmweaksols}.
\begin{enumerate}
\item
\label{rmkpropsolL21}
If $|\Omega|<\infty$ then the spaces $Z_{\ell,p}^\star$ may be replaced with $H^{-1}(\Omega)+L^\frac{p+1}p(\Omega).$ See the end of the proof of Theorem~\ref{thmweaksols} for the details.
\item
\label{rmkpropsolL22}
Whether or not $u^\p:(0,\infty)\tends H^{-1}(\Omega)+L^\infty(\Omega)+L^\frac{p+1}p(\Omega)$ is measurable is an open question.
\item
\label{rmkpropsolL23}
It follows from the properties of $(Y_\ell)_{\ell\in\N_0}$ that for any $\ell\in\N_0,$
\begin{gather*}
\|U\|_{L^\infty((0,\infty);Y_\ell^\star)}\le\|U\|_{L^\infty((0,\infty)\times\Omega)}\le1,
\end{gather*}
\end{enumerate}
\end{rmk}

\begin{thm}[\textbf{Additional regularity of the weak solutions}]
\label{thmstrongaH1}
Let Assumption~$\ref{ass}$ be fulfilled, let $f\in L^2_\loc\big([0,\infty);L^2(\Omega)\big),$ let $u_0\in H^1_0(\Omega)$ and let $u$ be the unique weak solution to~\eqref{gl}--\eqref{u0} given by Theorem~$\ref{thmweak}.$ Then we have that,
\begin{gather}
\begin{cases}
\label{thmstrongaH11}
u\in C_\w\big([0,\infty);H^1_0(\Omega)\big),							\medskip \\
\Delta u\in L^2_\loc\big([0,\infty);L^2(\Omega)\big).
\end{cases}
\end{gather}
In addition, for any $t\ge s\ge0.,$
\begin{gather}
\label{thmstrongaH12}
\|\nabla u(t)\|_{L^2(\Omega)}^2+\cos\theta\vint_s^t\|\Delta u(\sigma)\|_{L^2(\Omega)}^2\d\sigma
\le\|\nabla u(s)\|_{L^2(\Omega)}^2+\frac1{\cos\theta}\vint_s^t\|f(\sigma)\|_{L^2(\Omega)}^2\d\sigma,
\end{gather}
\textbf{Case }$\bs{m>0.}$
Then $u\in W^{1,q}_\loc\big([0,\infty);L^2(\Omega)+L^\frac{m+1}m(\Omega)+L^\frac{p+1}p(\Omega)\big)$ and $u$ satisfies \eqref{gl} in
\begin{gather*}
L^q_\loc\big([0,\infty);L^2(\Omega)+L^\frac{m+1}m(\Omega)+L^\frac{p+1}p(\Omega)\big)
\text{ where } q=
\begin{cases}
2,			&	\text{if } b=0,	\\
\frac{p+1}p,	&	\text{if } b\neq0.
\end{cases}
\end{gather*}
\textbf{Case }$\bs{m=0.}$ Assume further that $|\Omega|<\infty,$ let $q$ be as above and let $U$ be the saturated section associated to $u.$ Then
\begin{gather}
\label{thmstrongaH13}
u\in W^{1,q}_\loc\big([0,\infty);L^2(\Omega)+L^\frac{p+1}p(\Omega)\big), \quad U\in L^\infty_\loc\big([0,\infty);L^2(\Omega)\big),
\end{gather}
and $u$ satisfies \eqref{gl} in $L^q_\loc\big([0,\infty);L^2(\Omega)+L^\frac{p+1}p(\Omega)\big).$ Finally, the map $t\longmapsto\|u(t)\|_{L^2(\Omega)}^2$ belongs to $W^{1,1}_\loc\big([0,\infty);\R\big)$ and for almost every $t>0,$
\begin{gather}
\label{L2}
\begin{split}
	& \frac12\frac{\d}{\d t}\|u(t)\|_{L^2(\Omega)}^2+\cos\theta\|\nabla u(t)\|_{L^2(\Omega)}^2+\Re(a\e^{\vi\theta})\|u(t)\|_{L^{m+1}(\Omega)}^{m+1}	\\
  +\;	& \Re(b\e^{\vi\theta})\|u(t)\|_{L^{p+1}(\Omega)}^{p+1}+\Re(\gamma\e^{\vi\theta})\|u(t)\|_{L^2(\Omega)}^2=\Re\left(\e^{\vi\theta}\vint_{\Omega}f(t,x)\,\ovl{u(t,x)}\,\d x\right),
\end{split}
\end{gather}
\end{thm}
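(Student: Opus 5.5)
The plan is to establish the a priori $H^1_0$ estimate \eqref{thmstrongaH12} for smooth (i.e.\ $H^2$-strong) solutions, and then pass to the limit using the approximating sequence of Definition~\ref{defsol} together with the continuous dependence of Proposition~\ref{propdep}.

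\textbf{Approximation.} By density, we pick $u_0^n\in H^2(\Omega)\cap H^1_0(\Omega)\cap X_{m,p}$ with $u_0^n\to u_0$ in $H^1_0(\Omega)$, and $f_n\in W^{1,1}_\loc([0,\infty);L^2(\Omega))$ with $f_n\to f$ in $L^2_\loc([0,\infty);L^2(\Omega))$. The maximal monotone framework of Section~\ref{maxmon} (Brezis' theory for the operator $\e^{\vi\theta}(-\Delta+a|u|^{-(1-m)}u+b|u|^{p-1}u+\gamma u)$ on $L^2$) then provides $H^2$-solutions $u_n$. By \eqref{propdep2}, $u_n\to u$ in $C([0,T];L^2(\Omega))$.

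\textbf{Gradient identity.} For $u_n$ we multiply the equation by $-\e^{\vi\theta}\,\ovl{\Delta u_n}$, integrate over $\Omega$ and take real parts. The time-derivative term gives $\frac12\frac{\d}{\d t}\|\nabla u_n\|_{L^2}^2$, and the Laplacian gives $\cos\theta\|\Delta u_n\|_{L^2}^2$. The key monotonicity facts are:
\begin{itemize}
\item $\Re\bigl(\e^{\vi\theta}\gamma\bigr)\|\nabla u_n\|^2\ge0$, by \eqref{g};
\item $\Re\bigl(-\e^{\vi\theta}a\int|u|^{-(1-m)}u\,\ovl{\Delta u}\bigr)\ge0$ when $a\in C_\theta(m)$, and similarly for $b\in C_\theta(p)$.
\end{itemize}
The second fact follows from the pointwise computation $\nabla(|u|^{q-1}u)\cdot\ovl{\nabla u}=|u|^{q-1}\bigl(\frac{q+1}2|\nabla u|^2+\frac{q-1}2\frac{u^2}{|u|^2}\ovl{\nabla u}^2\bigr)$. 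The definition \eqref{Cm} is exactly the condition that $\Re\bigl(z\e^{\vi\theta}(\frac{q+1}2+\frac{q-1}2w)\bigr)\ge0$ for all $|w|\le1$. To see this, note that $|z\e^{\vi\theta}|\,|q-1|\le(q+1)\Re(z\e^{\vi\theta})$ is equivalent, after squaring, to $2\sqrt q\,\Re\ge|1-q||\Im|$.

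Because of the singularity for $m<1$ (and the multivalued/saturated term when $m=0$), I will justify this step on the Yosida approximation, or with $|u|^{-(1-m)}u$ replaced by $(|u|^2+\eps)^{\frac{m-1}2}u$, for which the same sign computation holds uniformly in $\eps$. Cauchy--Schwarz then gives $\Re(\e^{\vi\theta}\int f\ovl{\Delta u})\le\frac{\cos\theta}2\|\Delta u\|^2+\frac1{2\cos\theta}\|f\|^2$. Integrating in time yields \eqref{thmstrongaH12} for $u_n$.

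\textbf{Passage to the limit.} The sequence $u_n$ is bounded in $L^\infty((0,T);H^1_0(\Omega))$ and $\Delta u_n$ is bounded in $L^2((0,T);L^2(\Omega))$. Weak-$\star$ compactness, combined with the strong $L^2$ convergence, gives $u\in L^\infty_\loc H^1_0$ and $\Delta u\in L^2_\loc L^2$. Continuity of $u$ in $L^2$ plus boundedness in $H^1_0$ gives $u\in C_\w([0,\infty);H^1_0)$. The inequality \eqref{thmstrongaH12} follows from weak lower semicontinuity, first for $s=0$ and then for a.e.\ $s$ via restarting, using uniqueness; weak continuity then extends it to all $s$.

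\textbf{Time derivative and equation.} For $m>0$, we have $|u|^{-(1-m)}u\in L^{\frac{m+1}m}$ and $|u|^{p-1}u\in L^{\frac{p+1}p}$, with time integrability given by \eqref{Lm} and Theorem~\ref{thmweaksol}. Also $\Delta u,f,\gamma u\in L^2_\loc L^2$. Hence the equation, which holds in $\Dr'$ by \eqref{senseq}, expresses $u_t$ as an element of $L^q_\loc$ of the sum space. The exponent is $q=2$ if $b=0$, since then $|u|^{m}\in L^\infty_tL^{\frac{m+1}m}$ via $u\in L^\infty L^2\cap L^\infty H^1$ and interpolation (or $L^2$ directly); otherwise $q=\frac{p+1}p$.

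For $m=0$ with $|\Omega|<\infty$, $U\in L^\infty\subset L^2$, and the same argument holds using Theorem~\ref{thmweaksols}. Finally, \eqref{L2} follows by pairing the equation with $\e^{\vi\theta}\ovl u\in L^q{}'$ of the dual spaces (using $u\in L^{p+1}$ and $L^\infty H^1_0$) and applying the chain rule for $\|u\|_{L^2}^2$, as in \cite[Lemma~A.4]{MR4053613}. Note that $\Re\bigl(\e^{\vi\theta}a\,U\ovl u\bigr)=\Re(a\e^{\vi\theta})|u|$.

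\textbf{Main obstacle.} I expect the main obstacle to be the rigorous sign computation for the nonlinear term against $\Delta u$ in the singular/saturated regime, and on unbounded $\Omega$, where integration by parts needs a regularization that preserves the $C_\theta(m)$ sign condition.
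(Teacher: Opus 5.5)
Your architecture matches the paper's proof. You approximate by $H^2$-solutions, derive the gradient bound for them, pass to the limit by weak lower semicontinuity, restart at time $s$ using uniqueness, then read off $u_t$ from the equation and apply a chain rule for $\|u\|_{L^2(\Omega)}^2$. The paper does not redo the sign computation; it simply invokes Property~\ref{thmstrongH24} of Theorem~\ref{thmstrongH2}, which rests on Lemma~\ref{lemenereg}. Your pointwise criterion, $\Re\big(z(\frac{q+1}2+\frac{q-1}2w)\big)\ge0$ for all $|w|\le1$, is a neat substitute for Lemma~\ref{lemeneregint}. If you do redo the computation, your sketch omits two ingredients of Lemma~\ref{lemenereg}:
\begin{itemize}
\item the superlinear term also needs a regularization before you integrate by parts, since $|u|^{p-1}u$ is not known to lie in $H^1_0(\Omega)$; the paper uses the truncation $h_M^p$ and lets $M\to\infty$;
\item for $m=0$, replacing $U$ by $g_\eps^0(u)$ only recovers $U$ on $\{u\neq0\}$. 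You need $\Delta u=0$ a.e.\ on $\{u=0\}$, which holds because $u(t)\in H^2_\loc(\Omega)$, to get rid of the arbitrary values of $U$ there.
\end{itemize}
Also, $H^2(\Omega)\cap H^1_0(\Omega)\cap X_{m,p}$ is not contained in $D(A_0^m)=D(L)\cap L^{2m}(\Omega)\cap L^{2p}(\Omega)$ in general. Take $\vphi_n\in\Dr(\Omega)$ as the paper does.

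Two later justifications are wrong as written, though each is easily repaired.

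\emph{The restart.} Since $u\in C_\w\big([0,\infty);H^1_0(\Omega)\big)$, you have $u(s)\in H^1_0(\Omega)$ for every $s$. Restarting therefore gives \eqref{thmstrongaH12} for all $s$ at once, which is what the paper does. Your route through a.e.\ $s$ followed by a weak-continuity extension would fail. The term $\|\nabla u(s)\|_{L^2(\Omega)}^2$ sits on the right-hand side, and weak continuity only gives $\|\nabla u(s)\|_{L^2(\Omega)}\le\liminf_k\|\nabla u(s_k)\|_{L^2(\Omega)}$, which is the wrong direction.

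\emph{The exponent $q=2$.} Take $b=0$, $0<m<1$ and $|\Omega|=\infty$. Your claim $|u|^m\in L^\infty_tL^{\frac{m+1}m}$ would require $u\in L^\infty_tL^{m+1}$. Knowing $u\in L^\infty_\loc\big([0,\infty);L^2(\Omega)\cap H^1_0(\Omega)\big)$ does not give this, because Sobolev embedding and interpolation only reach exponents $\ge2$. The fallback via $L^2$ also fails: $L^{\frac2m}(\Omega)\not\subset L^2(\Omega)+L^{\frac{m+1}m}(\Omega)$, since both exponents lie below $\frac2m$. The correct reason is \eqref{Lm}: it gives $g_0^m(u)\in L^{\frac{m+1}m}_\loc\big([0,\infty);L^{\frac{m+1}m}(\Omega)\big)$, and $\frac{m+1}m\ge2$.

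For the same reason, again with $0<m<1$ and $|\Omega|=\infty$, you cannot obtain \eqref{L2} by pairing $u_t$ with $u$ viewed in $L^{q'}_\loc$ of $L^2\cap L^{m+1}\cap L^{p+1}$. Here $u$ is only known to lie in $L^{m+1}_\loc\big([0,\infty);L^{m+1}(\Omega)\big)$. Each term must be paired with $u$ in its own space: $g_0^m(u)$ with $L^{m+1}_\loc(L^{m+1})$, $g_0^p(u)$ with $L^{p+1}_\loc(L^{p+1})$, and $\Delta u$ with $L^2_\loc(H^1_0)$. In other words, you need a chain rule for $u'$ lying in a sum of such dual spaces.
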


\begin{thm}[\textbf{Existence and uniqueness of $\bs{H^1_0}$-solutions}]
\label{thmstrongH1}
Let Assumption~$\ref{ass}$ be fulfilled and assume that one of the following assumptions holds.
\begin{empheq}[left=\empheqlbrace]{align}
\label{thmstrongH111}
&	b=0,			\dfrac{}{}	\\
\label{thmstrongH112}
&	p(N-2)\le N+2,	\dfrac{}{}	\\
\label{thmstrongH113}
&	|\Omega|<\infty.
\end{empheq}
If $m=0$ then assume further that $|\Omega|<\infty.$ Finally, let
\begin{gather}
\label{thmstrongH11}
f\in L^2_\loc\big([0,\infty);L^2(\Omega)\big)\cap L^\frac{m+1}m_\loc\big([0,\infty);H^{-1}(\Omega)+L^\frac{m+1}m(\Omega)+L^\frac{p+1}p(\Omega)\big).
\end{gather}
For any $u_0\in H^1_0(\Omega),$ there exists a unique $H^1_0$-solution $u,$ and $u$ satisfies \eqref{defsol11} with
\begin{gather*}
\begin{cases}
q=m,		&	\text{if } \eqref{thmstrongH111} \text{ or } \eqref{thmstrongH112} \text{ is satisfied},	\\
q=p,		&	 \text{if } \eqref{thmstrongH113} \text{ is satisfied}.
\end{cases}
\end{gather*}
Finally, $u$ is also a weak solution, the map $t\longmapsto\|u(t)\|_{L^2(\Omega)}^2$ belongs to $W^{1,1}_\loc\big([0,\infty);\R\big)$ and \eqref{L2} holds for almost every $t>0.$
\end{thm}

\begin{thm}[\textbf{Existence and uniqueness of $\bs{H^2}$-solutions}]
\label{thmstrongH2}
Let Assumption~$\ref{ass}$ be fulfilled and $f\in W^{1,1}_\loc\big([0,\infty);L^2(\Omega)\big).$ Then for any $u_0\in H^1_0(\Omega)\cap L^{2m}(\Omega)\cap L^{2p}(\Omega)$ with $\Delta u_0\in L^2(\Omega),$ there exists a unique $H^2$-solution $u$ to \eqref{gl}--\eqref{u0}. Furthermore, $u$ satisfies~\eqref{gl} in $L^\infty_\loc\big([0,\infty);L^2(\Omega)\big)$ and the following properties.
\begin{enumerate}
\item
\label{thmstrongH21}
We have that,
\begin{gather*}
u\in C\big([0,\infty);H^1_0(\Omega)\big)\cap W^{1,\infty}_\loc\big([0,\infty);L^2(\Omega)\big)\cap L^\infty_\loc\big([0,\infty);L^{2p}(\Omega)\big),
\end{gather*}
and, in addition, if $m>0$ then $u\in L^\infty_\loc\big([0,\infty);L^{2m}(\Omega)\big).$ If $m=0$ then the saturated section $U$ associated to $u$ satisfies \eqref{thmstrongaH13}.
\item
\label{thmstrongH22}
We have $\Delta u\in C_\w\big([0,\infty);L^2(\Omega)\big)$ and,
\begin{empheq}[left=\empheqlbrace]{align}
\label{strongH21}
	&	\|u(t)-u(s)\|_{L^2(\Omega)}\le\|u_t\|_{L^\infty((s,t);L^2(\Omega))}|t-s|,			\frac{}{}			\\
\label{strongH22}
	&	\|\nabla u(t)-\nabla u(s)\|_{L^2(\Omega)}\le M|t-s|^\frac12,									\\
\label{strongH23}
	&	\left\|u_t\right\|_{L^\infty((0,t);L^2(\Omega))}
		\le\|\e^{-\vi\theta}A_0^mu_0-f(0)\|_{L^2(\Omega)}+\int_0^t\|f^\p(\sigma)\|_{L^2(\Omega)}\d\sigma,
\end{empheq}
for any $t\ge s\ge0,$ where $M^2=2\|u_t\|_{L^\infty((s,t);L^2(\Omega))}\|\Delta u\|_{L^\infty((s,t);L^2(\Omega))}$ and $\e^{-\vi\theta}A_0^mu_0=-\Delta u_0+a|u_0|^{m-1}u_0+b|u_0|^{p-1}u_0+\gamma u_0$ $\big(\e^{-\vi\theta}A_0^0u_0=-\Delta u_0+a\frac{u_0}{|u_0|}\1_{\{u_0\neq0\}}+b|u_0|^{p-1}u_0+\gamma u_0,$ if $m=0).$
\item
\label{thmstrongH23}
The map $t\longmapsto\|u(t)\|_{L^2(\Omega)}^2$ belongs to $W^{1,\infty}_\loc\big([0,\infty);\R\big)$ and~\eqref{L2} holds for almost every $t>0.$
\item
\label{thmstrongH24}
The map $t\longmapsto\|\nabla u(t)\|_{L^2(\Omega)}^2$ belongs to $W^{1,\infty}_\loc\big([0,\infty);\R\big)$ and
\begin{gather*}
\frac{\d}{\d t}\|\nabla u(t)\|_{L^2(\Omega)}^2+\cos\theta\|\Delta u(t)\|_{L^2(\Omega)}^2\le\frac1{\cos\theta}\|f(t)\|_{L^2(\Omega)}^2,
\end{gather*}
for almost every $t>0.$
\item
\label{thmstrongH25}
If $f\in W^{1,1}\big((0,\infty);L^2(\Omega)\big)$ then $\Delta u\in L^\infty\big((0,\infty);L^2(\Omega)\big)$ and
\begin{align*}
	&	u\in C_\b\big([0,\infty);H^1_0(\Omega)\big)\cap W^{1,\infty}\big((0,\infty);L^2(\Omega)\big)\cap L^\infty\big((0,\infty);L^{2p}(\Omega)\big),	\\
	&	u\in L^\infty\big((0,\infty);L^{2m}(\Omega)\big), \text{ if } m>0 \; \text{ and } \; U\in L^\infty\big((0,\infty);L^2(\Omega)\big), \text{ if } m=0,
\end{align*}
where $U$ is the saturated section associated to $u.$
\end{enumerate}
\end{thm}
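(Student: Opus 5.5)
We will obtain Theorem~\ref{thmstrongH2} from Brezis' theory of maximal monotone operators on the Hilbert space $L^2(\Omega)$. The relevant version is the one for evolution equations $u'+Au\ni F$ with $F\in W^{1,1}_\loc\big([0,\infty);H\big)$ and $u_0\in D(A)$.

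\textbf{Rewriting the equation.} Multiplying \eqref{gl} by $\e^{\vi\theta}$ gives
\begin{gather*}
u_t+A_0^mu=\e^{\vi\theta}f,\qquad \e^{-\vi\theta}A_0^mu=-\Delta u+a|u|^{-(1-m)}u+b|u|^{p-1}u+\gamma u,
\end{gather*}
with $D(A_0^m)=\{u\in H^1_0\cap L^{2m}\cap L^{2p};\ \Delta u\in L^2\}$. For $m=0$ we use the multivalued graph obtained by replacing $\frac{u}{|u|}$ with a saturated section.

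\textbf{Step 1: the operator is maximal monotone.} We rely on the results of Section~\ref{maxmon}, which we take to contain exactly this statement.
\begin{itemize}
\item[(i)] \emph{Monotonicity.} $\Re\big(\e^{\vi\theta}\langle-\Delta(u-v),u-v\rangle\big)=\cos\theta\|\nabla(u-v)\|^2\ge0$. The $\gamma$ term is handled by \eqref{g}. For the nonlinear terms we need the pointwise inequality
\begin{gather*}
\Re\big(z\e^{\vi\theta}(|u|^{q-1}u-|v|^{q-1}v)\overline{(u-v)}\big)\ge0,
\end{gather*}
which is exactly what the condition $z\in C_\theta(q)$ in \eqref{Cm} encodes. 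This is the classical Liskevich--Perelmuter type computation, cf. \cite{MR1886827}. It applies to $q=m$ with $z=a$ and to $q=p$ with $z=b$.
\item[(ii)] \emph{Range condition.} For $F\in L^2$ we solve $u+A_0^mu=F$. We regularize the nonlinearities, for instance by truncating $|u|^{-(1-m)}$ near $0$ and $|u|^{p-1}$ at infinity. On unbounded $\Omega$ we also exhaust the domain by bounded sets. The regularized equation is solved by Browder--Minty in $H^1_0$. We then pass to the limit using monotone a priori bounds on $\|u\|_{H^1}$, $\|\Delta u\|_{L^2}$, $\|u\|_{L^{2p}}^p$ and $\|u\|_{L^{2m}}^m$. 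These bounds come from testing with $u$, with $-\Delta u$, and with $|u|^{p-1}u$, again using the cone conditions. For $m=0$ the saturated sections converge weak$\star$ in $L^\infty$ and are identified by a maximal-monotone-graph (Minty) argument.
\end{itemize}
We expect this step to be the main obstacle. The difficulty is the lack of compactness on general unbounded $\Omega$ together with the cross terms $\Re\big(\e^{\vi\theta}\langle a|u|^{m-1}u,-\Delta u\rangle\big)$, whose sign again requires the cone condition. Our plan is to compute this cross term pointwise via $\nabla(|u|^{m-1}u)$ for smooth truncations.

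\textbf{Step 2: Brezis' theorem.} Apply the theorem with $u_0\in D(A_0^m)$ and $\e^{\vi\theta}f\in W^{1,1}_\loc$. It yields a unique $u\in W^{1,\infty}_\loc([0,\infty);L^2)$ with $u(t)\in D(A_0^m)$ for every $t$ and $u$ right differentiable. It also gives the estimate
\begin{gather*}
\|u_t\|_{L^\infty(0,t;L^2)}\le\|A_0^mu_0-\e^{\vi\theta}f(0)\|+\int_0^t\|f'\|,
\end{gather*}
which is \eqref{strongH23} because $|\e^{\vi\theta}|=1$.

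\textbf{Step 3: the identities and regularity.}
\begin{itemize}
\item[(a)] Taking the $L^2$ product of the equation with $u$ and the real part gives the $W^{1,\infty}$ regularity of $\|u\|^2$ and \eqref{L2}.
\item[(b)] Taking the product with $-\Delta u$ gives
\begin{gather*}
\frac{\d}{\d t}\|\nabla u\|^2+2\cos\theta\|\Delta u\|^2\le2\|f\|\|\Delta u\|,
\end{gather*}
since the nonlinear cross terms are nonnegative by Step~1. Young's inequality then yields Property~\ref{thmstrongH24}.
\item[(c)] From the equation, $\Delta u$ is controlled in $L^2$ by $\|u_t\|+\|f\|$ plus the nonlinear terms. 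Testing instead with $\e^{\vi\theta}$ times the nonlinear parts bounds $\|u\|_{L^{2p}}^p$, $\|u\|_{L^{2m}}^m$ (or $\|U\|_{L^2}$) and $\|\Delta u\|$ in $L^\infty_\loc$. With weak continuity this gives $\Delta u\in C_\w$ and $u\in C([0,\infty);H^1_0)$.
\item[(d)] Estimate \eqref{strongH21} is immediate. For \eqref{strongH22}, integrate by parts:
\begin{gather*}
\|\nabla(u(t)-u(s))\|^2=-\langle\Delta(u(t)-u(s)),u(t)-u(s)\rangle\le2\|\Delta u\|_\infty\|u_t\|_\infty|t-s|.
\end{gather*}
\item[(e)] When $f\in W^{1,1}(0,\infty)$, the right-hand side of \eqref{strongH23} is uniformly bounded, which makes all the bounds global in time (Property~\ref{thmstrongH25}).
\end{itemize}

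\textbf{Uniqueness and Definition~\ref{defsol}.} Uniqueness follows from Proposition~\ref{propdep}. The regularity obtained above shows that $u$ fulfills Definition~\ref{defsol} with $H=L^2$.
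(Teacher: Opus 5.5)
Your architecture is the paper's. It consists of maximal monotonicity of $A_0^m$ on $L^2(\Omega)$ (Proposition~\ref{propAmax}), and Barbu's form of Brezis' theorem, which gives $u\in W^{1,\infty}_\loc\big([0,\infty);L^2(\Omega)\big)$ and \eqref{strongH23}. It then uses the pairings with $u$ and $-\Delta u$, and $\|\nabla v\|_{L^2}^2\le\|v\|_{L^2}\|\Delta v\|_{L^2}$ for \eqref{strongH22}. Uniqueness through Proposition~\ref{propdep} is the right tool for the whole class of Definition~\ref{defsol}. For the range condition, your Browder--Minty argument plus exhaustion of $\Omega$ would work, but the paper does it more cheaply. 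There $B_\eps^m$ is Lipschitz, monotone and everywhere defined on $L^2(\Omega)$, so Brezis' perturbation results apply directly, and local compactness gives a.e.\ convergence without any exhaustion.

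The step that does not hold as written is 3(c), the pointwise-in-time bounds on $\Delta u$ and on the nonlinear terms. In your normalization $u_t+A_0^mu=\e^{\vi\theta}f$, pairing with $\e^{\vi\theta}ag_0^m(u)$ produces the Laplacian term $\Re\big(\bar a\int_\Omega(-\Delta u)\overline{g_0^m(u)}\,\d x\big)$. Lemma~\ref{lemenereg} controls this only under the cone condition at $\theta=0$, i.e.\ $a\in C_0(m)$, which \eqref{a} does not give. The good sign requires pairing \eqref{gl} itself with $\e^{\vi\theta}ag_0^m(u)$.

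Even with that phase, pairing with the two nonlinear parts separately leaves a term $\Re(a\bar b\e^{-\vi\theta})\int_\Omega|u|^{m+p}\,\d x$ of no definite sign. You would have to pair with $\e^{\vi\theta}\big(ag_0^m(u)+bg_0^p(u)\big)$. You would then use that $a\e^{\vi\theta}$ and $b\e^{\vi\theta}$ lie in the open right half-plane to split $\|ag_0^m(u)+bg_0^p(u)\|_{L^2}$.

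The paper avoids all this with a tool you already use in the resolvent problem. For a.e.\ $t$, pair \eqref{gl} with $-\e^{-\vi\theta}\Delta u(t)$, but keep $\big(u_t,-\Delta u\big)_{L^2(\Omega)}$ as it is rather than writing it as $\frac12\frac{\d}{\d t}\|\nabla u\|_{L^2(\Omega)}^2$. Lemma~\ref{lemenereg} then gives directly $\cos\theta\|\Delta u(t)\|_{L^2}\le\|u_t(t)\|_{L^2}+\|f(t)\|_{L^2}$.

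Next, pair with $\e^{-\vi\theta}g_0^p(u)$. The $a$-term becomes $\Re(a\e^{\vi\theta})\int_\Omega|u|^{m+p}\,\d x\ge0$, and the Laplacian term is absorbed by Young's inequality using the bound just obtained. This bounds $\|u\|_{L^{2p}}$. Finally, $g_0^m(u)$ (or $U$ if $m=0$) is read off from the equation. This ordering also supplies the $L^\infty_\loc$ bound on $\Delta u$ needed to justify the chain rule in your 3(b).
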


\begin{rmk}
\label{rmkthmstrongH2}
Theorem~\ref{thmstrongH2} appeals to some comments.
\begin{enumerate}
\item
\label{rmkthmstrongH21}
Since $f\in W^{1,1}_\loc\big([0,\infty);L^2(\Omega)\big)\inj C\big([0,\infty);L^2(\Omega)\big),$ estimate~\eqref{strongH23} with $f(0)$ makes sense.
\item
\label{rmkthmstrongH22}
By H\"{o}lder's inequality, $\|u\|_{L^{p+1}(\Omega)}^{p+1}\le\|u\|_{L^{2p}(\Omega)}^p\|u\|_{L^2(\Omega)}$ and then $u\in C\big([0,\infty);L^{p+1}(\Omega)\big).$ Futhermore, if $v$ is also an $H^2$-solution then by H\"{o}lder's inequality,
\begin{gather*}
\||u|^{p-1}u-|v|^{p-1}v\|_{L^\frac{p+1}p(\Omega)}\le C\left(\|u\|_{L^{p+1}(\Omega)}^{p-1}+\|v\|_{L^{p+1}(\Omega)}^{p-1}\right)\|u-v\|_{L^{p+1}(\Omega)},
\end{gather*}
so that $|u|^{p-1}u\in C\big([0,\infty);L^\frac{p+1}p(\Omega)\big).$
\item
\label{rmkthmstrongH23}
In the same way, if $m>0$ then $u\in C\big([0,\infty);L^{m+1}(\Omega)\big)$ and $|u|^{m-1}u\in C\big([0,\infty);L^\frac{m+1}m(\Omega)\big)$ (\cite[Lemma~A.1]{MR4053613}). Therefore, $\|u(\,.\,)\|_{L^2(\Omega)}^2\in C^1\big([0,\infty);\R\big)$ and \eqref{L2} holds true for any $t\ge0.$ We then infer by the equation \eqref{gl} that
\begin{gather*}
\begin{cases}
\dfrac{\partial u}{\partial t}\in C_\w\big([0,\infty);L^2(\Omega)+L^\frac{m+1}m(\Omega)+L^\frac{p+1}p(\Omega)\big),	\medskip \\
u\in C^1\big([0,\infty);H^{-1}(\Omega)+L^\frac{m+1}m(\Omega)+L^\frac{p+1}p(\Omega)\big).
\end{cases}
\end{gather*}
\item
\label{rmkthmstrongH24}
Due to the regularity of $H^2$solutions, it follows that $u$ satisfies~\eqref{defsol11} with $q=m$ and $q=p.$
\end{enumerate}
\end{rmk}

\begin{rmk}
\label{rmkp}
All the results in this section remain valid without the presence of the superlinear term $b|u|^{p-1}u$, which amounts to taking $b=0.$ In this case, one must ignore all Lebesgue and Sobolev spaces that involve $p.$ In particular, if $u$ is a strong solution in the sense of the Definition~\ref{defsol}, then \eqref{defsol11} reads:
\begin{gather}
\label{rmkp1}
u\in L^{m+1}_\loc\big([0,\infty);H^1_0(\Omega)\cap L^{m+1}(\Omega)\big)\cap W^{1,\frac{m+1}m}_\loc\big([0,\infty);H^\star+L^\frac{m+1}m(\Omega)\big),
\end{gather}
as in the series of papers \cite{MR4098330,MR4053613,MR4340780,MR4503241,MR4725781}.
\end{rmk}

\section{Notations and some results of Functional Analysis}
\label{notfunana}

We collect here some notations that will be used along with this paper. Throughout this section, $X$ is a real Banach space, $\Omega$ is an arbitrary open subset of $\R^N$ and $I$ is an interval. The set of positive integers is denoted by $\N,$ and $\N_0=\N\cup\{0\}.$ Unless if specified, all functions are complex-valued and all the vector spaces are considered over the field $\R.$ For $1\le p\le\infty,$ $p^\prime$ is the conjugate of $p$ defined by $\frac{1}{p}+\frac{1}{p^\prime}=1.$ $X^\star$ is the topological dual of $X$ and $\langle\: . \; , \: . \:\rangle_{X^\star,X}\in\R$ is the $X^\star-X$ duality product. The scalar product in $L^2(\Omega)$ between two functions $u,v$ is, $(u,v)_{L^2(\Omega)}=\Re\int_{\Omega}u(x)\ovl{v(x)}\d x.$ For $p\in(0,\infty],$ $u\in L^p_\loc\big([0,\infty);X\big)$ means that $u\in L^p_\loc\big((0,\infty);X\big)$ and for any $T>0,$ $u_{|(0,T)}\in L^p\big((0,T);X\big).$ In the same way, we will use the notation $u\in W^{1,p}_\loc\big([0,\infty);X\big).$ If $p\in(0,\infty]$ and $r=0$ then $L^\frac{p}r(\Omega)=L^\infty(\Omega)$ and $W^{1,\frac{p}r}(\Omega)=W^{1,\infty}(\Omega).$ $L^0(\Omega)$ is the space of measurable functions $u:\Omega\tends\C$ such that $|u|<\infty,$ almost eveywhere in $\Omega.$ The Lebesgue measure of a measurable set $A\subset\R^N$ will be denoted by $|A|.$ $C_\w(I;X)$ is the space of continuous functions from $I$ to $\big(X,\sigma(X,X^\star)\big)(=X_\w).$ Finally, we denote by $C$ auxiliary positive constants, and sometimes, for positive parameters $a_1,\ldots,a_n,$ write as $C(a_1,\ldots,a_n)$ to indicate that the constant $C$ depends only and continuously on $a_1,\ldots,a_n$ (we will use this convention for constants which are not denoted merely by ``$C$'').
\medskip \\
Throughout this paper, we shall always identify $L^2(\Omega)$ with its topological dual. Let us recall some important results of Functional Analysis that we will use without necessarily referring to them. Let $E$ and $F$ be locally convex Hausdorff topological vector spaces. If $E\inj F$ with dense embedding then $F^\star\inj E^\star,$ and for any $L\in F^\star$ and $x\in E,$ $\langle L,x\rangle_{E^\star,E}=\langle L,x\rangle_{F^\star,F}.$ Moreover, if $E$ and $F$ are Banach spaces then so are $E\cap F$ and $E+F$ and if $E\cap F$ is dense in both $E$ and $F$ then $(E\cap F)^\star=E^\star+F^\star$ and $(E+F)^\star=E^\star\cap F^\star.$ If $\Dr(\Omega)\inj X$ with dense embedding then $L^1_\loc\big([0,\infty);X^\star\big)\inj\Dr^\p\big((0,\infty)\times\Omega\big).$ Let $p\in[1,\infty).$ If $X$ is separable then so is $L^p(I;X),$ and if $X$ is reflexive then $L^p(I;X)^\star\cong L^{p^\p}(I;X^\star).$ Let $X\inj Y$ be Banach spaces and $u\in C_\w(\ovl I;Y).$ Assume that there exist $C>0$ and a null set $N_0\subset I$ such that for any $t\in I\setminus N_0,$ $u(t)\in X$ and $\|u(t)\|_X\le C.$ If $X$ is reflexive then for any $t\in\ovl I,$ $u(t)\in X,$ $\|u(t)\|_X\le C$ and $u\in C_\w(\ovl I;X).$ Let $u\in\C^\Omega,$ $1<p\le\infty,$ and $(u_n)_{n\in\N}\subset L^p(\Omega)$ be bounded. If $u_n\tends u,$ as $n\to\infty,$ a.e.\;in $\Omega,$ then $u\in L^p(\Omega)$ and $u_n-\!\!\!\weak u,$ as $n\to\infty,$ in $L^p(\Omega)_\w,$ if $p<\infty,$ and in $L^\infty(\Omega)_{\w\star},$ if $p=\infty.$ Finally, we recall that an operator $(A,D(A))$ on $L^2(\Omega)$ is maximal monotone if, and only if, it is $m$-accretive. For more details, see Bergh and L\"{o}fstr\"{o}m~\cite{MR0482275}, Brezis~\cite{MR0348562}, Droniou~\cite{droniou}, Edwards~\cite{MR0221256}, Strauss~\cite{MR0205121,MR306715}, Tr\`{e}ves~\cite{MR2296978} and B\'{e}gout~\cite{MR4521439}.

\section{Maximal monotone operators}
\label{maxmon}

Throughout this paper, we shall use the following notations and conventions. Let $m\ge0.$ Since $\big||z|^{-(1-m)}z\big|=|z|^m,$ we extend by continuity at $z=0$ the map $z\longmapsto|z|^{-(1-m)}z$ by setting $|z|^{-(1-m)}z=0$ if $m>0$ and $z=0.$ To solve \eqref{gl}, we rewrite the equation as $\frac{\d u}{\d t}+A_0^mu=f,$ where $A_0^m$ is defined below, and use the theory of maximal operators in $L^2(\Omega).$ But the nonlinearities $|u|^{-(1-m)}u$ and $|u|^{p-1}u$ do not belong to $L^2(\Omega).$ To this end, we regularize the damping term around $u=0$ and truncate the superlinearity for large values of $u.$ As a consequence, we define the operators below. Let $\eps\ge0,$ $p>1$ and $M>0.$ For $u\in L^0(\Omega),$ we define
\begin{align*}
& g_\eps^m(u)=(|u|^2+\eps)^{-\frac{1-m}2}u,	\; m+\eps>0,		\\
& h_M^p(u)=|u|^{p-1}u\1_{\{|u|<M\}}+M^{p-1}u\1_{\{|u|\ge M\}},		\\
& g_0^0(u)=\frac{u}{|u|}, \; u\neq0.
\end{align*}
Obviously, for any $\eps\ge0,$ $M>0$ and $u\in L^0(\Omega),$ $g_\eps^1(u)=h_M^1(u)=u.$ In all this section, we suppose \eqref{O}, \eqref{t} and \eqref{g}. Let $a,b\in\C,$ $0\le m\le1,$ $p>1$ and $\eps\ge0.$ Let us define the following operators on $L^2(\Omega).$
\begin{align*}
&
\forall u\in D(L)\eqdef\left\{u\in H^1_0(\Omega); \; \Delta u\in L^2(\Omega)\right\}, \; Lu=-\e^{\vi\theta}\Delta u+\gamma\e^{\vi\theta}u,
\\
&
\begin{cases}
D(B_\eps^m)=L^2(\Omega), \; \eps>0 \text{ or } m=1,	\medskip \\
\forall u\in D(B_\eps^m), \; B_\eps^mu=a\e^{\vi\theta}g_\eps^m(u)+b\e^{\vi\theta}h_M^p(u),
\end{cases}
\begin{cases}
D(A_\eps^m)=D(L), \; \eps>0,	\medskip \\
\forall u\in D(A_\eps^m), \; A_\eps^mu=Lu+B_\eps^mu,
\end{cases}
\end{align*}
\begin{align*}
&
\begin{cases}
D(B_0^0)=L^2(\Omega),		\medskip \\
\forall u\in D(B_0^0), \; B_0^0u=\Big\{U\in L^2(\Omega); \; U \text{ is a saturated section associated to } u\Big\},
\end{cases}
\\[2pt]
&
\begin{cases}
D(A_0^0)=D(L)\cap L^{2p}(\Omega),				\medskip \\
\forall u\in D(A_0^0), \; A_0^0u=\big\{Lu+a\e^{\vi\theta}U+b\e^{\vi\theta}g_0^p(u); \; U\in B_0^0u\big\},
\end{cases}
\\[2pt]
&
\begin{cases}
D(A_0^m)=D(L)\cap L^{2m}(\Omega)\cap L^{2p}(\Omega), \; m>0,	\medskip \\
\forall u\in D(A_0^m), \; A_0^mu=Lu+a\e^{\vi\theta}g_0^m(u)+b\e^{\vi\theta}g_0^p(u).
\end{cases}
\end{align*}
We begin by establishing some preliminary lemmas on the monotonicity of the functions defined at the beginning of this section.

\begin{lem}
\label{lemge}
Let $0\le m\le1,$ $p\ge1,$ $\eps>0$ and $M>0.$ Then,
\begin{gather*}
g_\eps^m,h_M^p\in
C\big(L^2(\Omega);L^2(\Omega)\big)\cap C\big(H^1(\Omega);H^1(\Omega)\big)\cap C\big(H^1_0(\Omega);H^1_0(\Omega)\big).
\end{gather*}
In addition, for any $u\in H^1(\Omega),$
\begin{gather}
\label{lemge1}
\nabla g_\eps^m(u)=(m-1)\Re(\ovl u\nabla u)(|u|^2+\eps)^\frac{m-3}2u+(|u|^2+\eps)^\frac{m-1}2\nabla u,	\\
\label{lemge2}
\nabla h_M^p(u)=\nabla g_0^p(u)\1_{\{|u|<M\}}+M^{p-1}\nabla u\1_{\{|u|\ge M\}},
\end{gather}
almost everywhere in $\Omega,$ where $\nabla g_0^p(u)$ in \eqref{lemge2} is given by \eqref{lemge1} with $m=p$ and $\eps=0.$
\end{lem}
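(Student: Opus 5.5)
Both $g_\eps^m$ and $h_M^p$ are of the form $u\longmapsto F(u)=\phi(|u|^2)\,u$, with $\phi:[0,\infty)\tends(0,\infty)$ bounded. For $g_\eps^m$ we take $\phi(s)=(s+\eps)^{-\frac{1-m}2}\le\eps^{-\frac{1-m}2}$, and for $h_M^p$ we take $\phi(s)=s^{\frac{p-1}2}\1_{\{s<M^2\}}+M^{p-1}\1_{\{s\ge M^2\}}\le M^{p-1}$. The plan is to show that, viewed as maps $\C\tends\C\cong\R^2$, these functions are globally Lipschitz with $F(0)=0$, and then to deduce everything from standard Nemytskii-operator and chain-rule arguments.

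\emph{Step 1: Lipschitz bounds on $\C$.} For $g_\eps^m$, the map is $C^1$ on $\R^2$ and its real differential is $\d F(z)h=(m-1)\Re(\ovl zh)(|z|^2+\eps)^{\frac{m-3}2}z+(|z|^2+\eps)^{\frac{m-1}2}h$. Since $|z|^2(|z|^2+\eps)^{\frac{m-3}2}\le(|z|^2+\eps)^{\frac{m-1}2}$, its norm is bounded by $(2-m)\eps^{\frac{m-1}2}$, so $F$ is globally Lipschitz. For $h_M^p$, we note that $F$ is $C^1$ on $\{|z|<M\}$ and on $\{|z|>M\}$, continuous across $|z|=M$, and has derivative bounded by $pM^{p-1}$ on each piece. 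A segment between two points crosses the circle $|z|=M$ in at most two points, so we can integrate along the segment piece by piece and obtain a global Lipschitz bound $pM^{p-1}$.

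\emph{Step 2: $L^2$ continuity.} From Step 1 we have $|F(u)-F(v)|\le K|u-v|$ pointwise. Hence $F$ maps $L^2(\Omega)$ into $L^2(\Omega)$ and is Lipschitz there.

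\emph{Step 3: $H^1$ and $H^1_0$.} For a Lipschitz $F:\R^2\tends\R^2$ with $F(0)=0$, the chain rule in $W^{1,1}_\loc$ gives $F(u)\in H^1$ and $\nabla F(u)=\d F(u)\nabla u$ almost everywhere. For $g_\eps^m$, which is $C^1$ with bounded derivative, this is the classical result and yields \eqref{lemge1}. For $h_M^p$ we use the piecewise form. On the set $\{|u|=M\}$ we have $\nabla|u|^2=2\Re(\ovl u\nabla u)=0$ almost everywhere (Stampacchia applied to $|u|^2\in W^{1,1}_\loc$). Equivalently, we can approximate the truncation by smooth cutoffs, apply the $C^1$ chain rule, and pass to the limit. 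Either way \eqref{lemge2} holds almost everywhere, with the boundary set irrelevant. Membership in $H^1_0$ follows because $u_n\in\Dr(\Omega)$, $u_n\to u$ in $H^1$ implies that $F(u_n)$ is Lipschitz with compact support, hence in $H^1_0$, and that $F(u_n)$ is bounded in $H^1$. A subsequence therefore converges weakly to $F(u)$, and $H^1_0$ is weakly closed.

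\emph{Step 4: continuity in $H^1$, which I expect to be the main obstacle.} Boundedness of the gradient is easy; strong continuity is the issue. Let $u_n\to u$ in $H^1$ and pass to a subsequence with $u_n\to u$ and $\nabla u_n\to\nabla u$ almost everywhere, dominated by some $G\in L^2$. Then $\nabla F(u_n)-\nabla F(u)=\d F(u_n)(\nabla u_n-\nabla u)+(\d F(u_n)-\d F(u))\nabla u$. The first term tends to $0$ in $L^2$ because $\d F$ is bounded. For the second term, dominated convergence applies for $g_\eps^m$, since $\d F$ is continuous. For $h_M^p$, $\d F$ jumps on $|z|=M$. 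Almost everywhere on $\{|u|=M\}$ we have $\nabla u=0$ by the Stampacchia argument above: both $\Re(\ovl u\nabla u)=0$ and the fact that the formula is a combination of $\nabla u$ terms give vanishing contributions, and if needed we use $\nabla|u|=0$ there, while the tangential part is handled by the continuity of the tangential derivative $M^{p-1}$ across the circle. Off this set, $\d F(u_n)\to\d F(u)$ pointwise. So dominated convergence still applies, and the usual subsequence argument gives convergence of the whole sequence.
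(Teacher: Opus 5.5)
Your argument is correct in substance, but it is organized differently from the paper's.

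\textbf{The paper's route.} The paper approximates $u$ rather than the nonlinearity. By Meyer--Serrin it picks $u_n\in H^1(\Omega)\cap C^\infty(\Omega)$ with $u_n\to u$ and $\nabla u_n\to\nabla u$ a.e., and $|\nabla u_n|\le\Gamma\in L^2(\Omega)$. It writes the explicit gradients of $g_\eps^m(u_n)$ and $h_M^p(u_n)$, using $\Re(\ovl v\nabla v)=0$ a.e.\ on $\{|v|=M\}$ to place the level set in the second branch. It then passes to the limit by dominated convergence and asserts $H^1$-continuity ``with the same arguments''. Finally it deduces $H^1_0$-invariance from that continuity by approximating with test functions.

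\textbf{Your route.} You prove explicit global Lipschitz bounds on $\C$. These give $L^2$-Lipschitz continuity and the uniform bound $|\nabla F(v)|\le K|\nabla v|$ at once. You get the gradient formulas from a chain rule plus Stampacchia-type locality. You obtain $H^1_0$-invariance from $H^1$-boundedness and weak closedness of $H^1_0(\Omega)$, so it does not depend on strong continuity. Your splitting $D(u_n)(\nabla u_n-\nabla u)+(D(u_n)-D(u))\nabla u$ isolates the one delicate point: the level set $\{|u|=M\}$, where the differential of $h_M^p$ jumps. The paper's limiting step handles that point only implicitly.

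\textbf{Trade-off.} The paper gains uniformity: one mechanism gives both formula and continuity, with no chain rule for Lipschitz maps of $\R^2$-valued Sobolev functions. Your version makes the constants and the level-set mechanism explicit.

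\textbf{Correction in Step~4.} The claim that $\nabla u=0$ a.e.\ on $\{|u|=M\}$ is false for complex-valued $u$; take $u=M\e^{\vi x_1}$ on a bounded $\Omega$. Stampacchia only gives $\nabla|u|=0$, i.e.\ $\Re(\ovl u\,\partial_ju)=0$ a.e.\ there. Delete that clause and keep your tangential argument, which is the right one. At such a point, if $|u_n(x)|\ge M$ then $D(u_n)=D(u)=M^{p-1}I$. If $|u_n(x)|<M$, then
$D(u_n)\partial_ju=(p-1)\Re(\ovl{u_n}\partial_ju)|u_n|^{p-3}u_n+|u_n|^{p-1}\partial_ju\to M^{p-1}\partial_ju=D(u)\partial_ju$,
because $\Re(\ovl{u_n}\partial_ju)\to\Re(\ovl u\,\partial_ju)=0$.

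\textbf{Correction in Step~3.} Do not invoke $\nabla F(u)=\d F(u)\nabla u$ for a general Lipschitz $F$ and $\R^2$-valued $u$. Here $\d h_M^p$ does not exist on $|z|=M$, and $u$ may take values there on a set of positive measure. What actually proves \eqref{lemge2} is your smooth-cutoff variant, or the following locality argument. Apply Stampacchia componentwise to $h_M^p(u)-M^{p-1}u$, which vanishes on $\{|u|\ge M\}$. Apply it also to $h_M^p(u)-G(u)$, which vanishes on $\{|u|\le M\}$; here $G$ is a $C^1$ globally Lipschitz extension of $z\mapsto|z|^{p-1}z$ from the closed disc of radius $M$. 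On $\{|u|=M\}$ the two branches agree precisely because $\Re(\ovl u\nabla u)=0$ a.e.\ there.
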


\begin{rmk}
\label{rmklemge2}
Let $p$ and $M$ be as in the lemma. For $z\in\C,$ we set
\begin{gather*}
h_M^p(z)=|z|^{p-1}z\1_{\{|z|<M\}}+M^{p-1}z\1_{\{|z|\ge M\}}.
\end{gather*}
Since $h_M^p$ is globally Lipschitz continuous, we would like to prove \eqref{lemge2} with the help of the theory of Nemytskii superposition operators. But the set on which $h_M^p,$ seen as a function from $\R^2$ to $\R^2,$ is not $C^1$ is infinite, since it is ${\{|z|=M\}}.$ Therefore, the formula
\begin{gather*}
\nabla h_M^p(u)=Dh_M^p(u)\nabla u,
\end{gather*}
is not valid and we have to proceed in another way.
\end{rmk}

\begin{vproof}{of Lemma~\ref{lemge}.}
Let $m,p,\eps$ and $M$ be as in the lemma. A straightforward calculation shows that for any $u,v\in L^0(\Omega),$
\begin{gather*}
|g_\eps^m(u)-g_\eps^m(v)|+|h_M^p(u)-h_M^p(v)|\le C(M,p,m,\eps)|u-v|,
\end{gather*}
so that $g_\eps^m,h_M^p\in C\big(L^2(\Omega);L^2(\Omega)\big).$ Let $u\in H^1(\Omega).$ By Meyer-Serrin's Theorem and the partial converse of the dominated convergence Theorem, there exist $(u_n)_{n\in\N}\subset H^1(\Omega)\cap C^\infty(\Omega)$ and $\Gamma\in L^2(\Omega;\R)$ such that
\begin{gather}
\label{demlemge1}
u_n\xrightarrow[n\to\infty]{H^1(\Omega)}u, \; u_n\xrightarrow[n\to\infty]{\text{a.e.\:in }\Omega}u, \; \nabla u_n\xrightarrow[n\to\infty]{\text{a.e.\:in }\Omega}\nabla u \text{ and for any } n\in\N, \;
|\nabla u_n|\overset{\text{a.e.}}{\le}\Gamma.
\end{gather}
Let $n\in\N.$ A straightforward calculation gives that $\nabla g_\eps^m(u_n),\nabla h_M^p(u_n)\in L^2(\Omega)$ and
\begin{gather}
\label{demlemge2}
\nabla g_\eps^m(u_n)=(m-1)\Re(\ovl u_n\nabla u_n)(|u_n|^2+\eps)^\frac{m-3}2u_n+(|u_n|^2+\eps)^\frac{m-1}2\nabla u_n,		\\
\label{demlemge3}
\nabla h_M^p(u_n)=
\begin{cases}
(p-1)\Re(\ovl u_n\nabla u_n)|u_n|^{p-3}u_n+|u_n|^{p-1}\nabla u_n,	&	\text{if } |u_n|<M,	\medskip \\
M^{p-1}\nabla u_n,										&	\text{if } |u_n|>M.
\end{cases}
\end{gather}
We use \eqref{demlemge1}, the dominated convergence Theorem, and we let $n\nearrow\infty$ in \eqref{demlemge2}. We then obtain that $g_\eps^m:H^1(\Omega)\tends H^1(\Omega)$ and \eqref{lemge1}. Now, let us recall that if $v\in H^1(\Omega)$ then $|v|\in H^1(\Omega;\R),$ $\nabla|v|=0,$ a.e.\:in $\{|v|=M\},$ and thus $\Re(\ovl v\nabla v)=|v|\nabla|v|=0,$ a.e.\:in $\{|v|=M\}$ (see, for instance, Theorem~6.17, p.152, in~Lieb and Loss~\cite{MR1817225}). It follows that,
\begin{gather}
\label{demlemge4}
\nabla h_M^p(u_n)=\nabla g_0^p(u_n)\1_{\{|u_n|<M\}}+M^{p-1}\nabla u_n\1_{\{|u_n|\ge M\}},
\end{gather}
Using \eqref{demlemge1}, \eqref{demlemge3} and the dominated convergence Theorem, we may let $n\nearrow\infty$ in \eqref{demlemge4}. We get that $h_M^p(u)\in H^1(\Omega)$ and \eqref{lemge2}. With the same arguments, we show that $g_\eps^m,h_M^p\in C\big(H^1(\Omega);H^1(\Omega)\big).$ Now, let us show that $h_M^p\in C\big(H^1_0(\Omega);H^1_0(\Omega)\big).$ Let $u\in H^1_0(\Omega).$ It is enough to swow that $h_M^p(u)\in H^1_0(\Omega).$ Let $(\vphi_n)_{n\in\N}\subset\Dr(\Omega)$ be such that $\vphi_n\tends u,$ as $n\to\infty,$ in $H^1_0(\Omega).$ Since $(h_M^p(\vphi_n))_{n\in\N}\subset H^1(\Omega)$ and $\supp h_M^p(\vphi_n)$ is compact for any $n\in\N,$ it follows that $(h_M^p(\vphi_n))_{n\in\N}\subset H^1_0(\Omega)$ and, by continuity, that $h_M^p(u)\in H^1_0(\Omega)$ and $h_M^p(\vphi_n)\tends h_M^p(u),$ as $n\to\infty,$ in $H^1_0(\Omega).$ We show in the same way that $g_\eps^m\in C\big(H^1_0(\Omega);H^1_0(\Omega)\big).$ This ends the proof of the lemma.
\medskip
\end{vproof}

\begin{lem}
\label{lemeneregint}
Let $q,\eps\ge0$ and let $\Omega^\p\subset\R^N$ be measurable $($not necessarily open$).$ Assume that there exists $A:H^1(\Omega^\p)\tends H^1(\Omega^\p)$ such that for any $u\in H^1(\Omega^\p),$
\begin{gather}
\label{demlemeneregint1}
\nabla A(u)=(q-1)\Re(\ovl u\nabla u)(|u|^2+\eps)^\frac{q-3}2u+(|u|^2+\eps)^\frac{q-1}2\nabla u,
\end{gather}
almost everywhere in $\Omega^\p,$ where both terms belong to $L^2(\Omega^\p).$ Then,
\begin{gather}
\label{demlemeneregint2}
|1-q|\Re\left(\,\vint_{\Omega^\p}\nabla u.\ovl{\nabla A(u)}\d x\right)\ge2\sqrt{q}\,\left|\Im\left(\,\vint_{\Omega^\p}\nabla u.\ovl{\nabla A(u)}\d x\right)\right|,
\end{gather}
for any $u\in H^1(\Omega^\p).$
\end{lem}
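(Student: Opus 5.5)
The plan is to prove \eqref{demlemeneregint2} \emph{pointwise}, that is for the integrand $\nabla u.\ovl{\nabla A(u)}$ almost everywhere in $\Omega^\p,$, and then to integrate. Integration is harmless: $\Re\int=\int\Re$, and $\left|\Im\int\right|\le\int|\Im|$. Fix $u\in H^1(\Omega^\p)$ and set $w=|u|^2+\eps$. Also introduce the real vector fields
\begin{gather*}
X=\Re(\ovl u\nabla u), \qquad Y=\Im(\ovl u\nabla u),
\end{gather*}
so that $\ovl u\nabla u=X+\vi Y$ and $|u|^2|\nabla u|^2=|X|^2+|Y|^2$.

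\textbf{Step 1: expand the integrand.} Taking the conjugate of \eqref{demlemeneregint1} and dotting with $\nabla u$ componentwise gives
\begin{gather*}
\nabla u.\ovl{\nabla A(u)}=(q-1)w^\frac{q-3}2X.(X+\vi Y)+w^\frac{q-1}2|\nabla u|^2.
\end{gather*}
This holds a.e. on $\{w>0\}$. On $\{w=0\}$ we have $\eps=0$ and $u=0$; there $\nabla u=0$ a.e. (Lieb--Loss, as used in the proof of Lemma~\ref{lemge}), so both sides vanish, and this set may be discarded. Reading off real and imaginary parts on $\{w>0\}$:
\begin{gather*}
\Re\big(\nabla u.\ovl{\nabla A(u)}\big)=w^\frac{q-3}2\big((q-1)|X|^2+w|\nabla u|^2\big), \qquad
\Im\big(\nabla u.\ovl{\nabla A(u)}\big)=(q-1)w^\frac{q-3}2X.Y.
\end{gather*}

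\textbf{Step 2: lower bound for the real part.} Since $w\ge|u|^2$, we have $w|\nabla u|^2\ge|X|^2+|Y|^2$. Hence
\begin{gather*}
\Re\big(\nabla u.\ovl{\nabla A(u)}\big)\ge w^\frac{q-3}2\big(q|X|^2+|Y|^2\big)\ge0.
\end{gather*}
This bound is valid for every $q\ge0$, including $q<1$. The point is that the only negative contribution, $(q-1)|X|^2$, is absorbed exactly by the $|X|^2$ part of $|u|^2|\nabla u|^2$.

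\textbf{Step 3: compare with the imaginary part.} By the arithmetic--geometric mean inequality, $q|X|^2+|Y|^2\ge2\sqrt q\,|X|\,|Y|\ge2\sqrt q\,|X.Y|$. Multiplying Step~2 by $|1-q|\ge0$ then gives, pointwise,
\begin{gather*}
|1-q|\Re\big(\nabla u.\ovl{\nabla A(u)}\big)\ge2\sqrt q\,|1-q|\,w^\frac{q-3}2|X.Y|=2\sqrt q\,\big|\Im\big(\nabla u.\ovl{\nabla A(u)}\big)\big|.
\end{gather*}

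\textbf{Step 4: integrate.} The integrand lies in $L^1(\Omega^\p)$, since both factors are in $L^2(\Omega^\p)$. Integrating the pointwise inequality of Step~3 over $\Omega^\p$ and using $\left|\Im\int\right|\le\int|\Im|$ yields \eqref{demlemeneregint2}.

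The only delicate point I expect is Step~1: justifying the pointwise identity where the weight $w^\frac{q-3}2$ degenerates, i.e. when $\eps=0$ on the set $\{u=0\}$. This is handled by the fact that $\nabla u=0$ a.e. there. The rest is elementary algebra.
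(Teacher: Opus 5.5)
Your proof is correct and is essentially the paper's argument. The paper also expands $\nabla u.\ovl{\nabla A(u)}$ and uses $|u|^2|\nabla u|^2=|\Re(\ovl u\nabla u)|^2+|\Im(\ovl u\nabla u)|^2$ to bound the real part below by $\int (q|X|^2+|Y|^2)(|u|^2+\eps)^{\frac{q-3}2}$ plus a nonnegative $\eps$-term, and then concludes with Young's inequality $2|X|\,|Y|\le\sqrt q\,|X|^2+|Y|^2/\sqrt q$. The differences are cosmetic: you argue pointwise before integrating, which handles $q=0$ uniformly and makes the degenerate set $\{u=0\}$ (when $\eps=0$) explicit, whereas the paper works directly with the integrals and treats $q=0$ separately.
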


\begin{vproof}{of Lemma~\ref{lemeneregint}.}
Let the assumptions of the lemma be fulfilled. We have by \eqref{demlemeneregint1} that,
\begin{gather*}
I\eqdef\vint_{\Omega^\p}\nabla u.\ovl{\nabla A(u)}\d x
=\vint_{\Omega^\p}\frac{|\nabla u|^2(|u|^2+\eps)-(1-q)\Re(\ovl u\nabla u).\ovl u\nabla u}{(|u|^2+\eps)^\frac{3-q}2}\d x,
\end{gather*}
Using that $|\nabla u|^2|u|^2=|\Re(\ovl u\nabla u)|^2+|\Im(\ovl u\nabla u)|^2,$ we get that
\begin{align*}
&	\Re(I)=\eps\,\vint_{\Omega^\p}\frac{|\nabla u|^2}{(|u|^2+\eps)^\frac{3-q}2}\d x
			+\vint_{\Omega^\p}\frac{q|\Re(\ovl u\nabla u)|^2+|\Im(\ovl u\nabla u)|^2}{(|u|^2+\eps)^\frac{3-q}2}\d x,	\\
&	\Im(I)=(1-q)\vint_{\Omega^\p}\frac{\Re(\ovl u\nabla u).\Im(\ovl u\nabla u)}{(|u|^2+\eps)^\frac{3-q}2}\d x.
\end{align*}
In particular, this gives the result if $q=0,$ so that we now assume that $q>0.$ By Young's inequality,
\begin{gather*}
2|\Re(\ovl u\nabla u)|\,|\Im(\ovl u\nabla u)|\le\sqrt q\,|\Re(\ovl u\nabla u)|^2+\frac{|\Im(\ovl u\nabla u)|^2}{\sqrt q}.
\end{gather*}
It follows that,
\begin{align*}
	&	\; 2\sqrt{q}\,|\Im(I)|\le|1-q|\vint_{\Omega^\p}\frac{2\sqrt{q}\,|\Re(\ovl u\nabla u)|\,|\Im(\ovl u\nabla u)|}{(|u|^2+\eps)^\frac{3-q}2}\d x	\\
  \le	&	\; |1-q|\vint_{\Omega^\p}\frac{q|\Re(\ovl u\nabla u)|^2+|\Im(\ovl u\nabla u)|^2}{(|u|^2+\eps)^\frac{3-q}2}\d x\le|1-q|\Re(I),
\end{align*}
which gives \eqref{demlemeneregint2}.
\medskip
\end{vproof}

\noindent
The lemma below concerns the accretivity of the operators defined at the beginning of this section. A part of these results (estimate \eqref{lemenereg21} below) can be found in Pazy~\cite{MR710486} (proof of Theorem~3.6, p.215--216; see also Okazawa~\cite{MR1072347}), but for the sake of completeness, we give the full proof.

\begin{lem}
\label{lemenereg}
Let $m\ge0,$ $\eps>0,$ $M>0,$ $p>1$ and $u\in D(L).$
\begin{enumerate}
\item
\label{lemenereg1}
Suppose $m=0.$ Let $U\in L^1_\loc(\Omega)$ be such that $U=\dfrac{u}{|u|},$ a.e.\,in $\{u\neq0\}.$ If $U\Delta u\in L^1(\Omega)$ then
\begin{gather}
\label{lemenereg11}
\Re\left(\,\dsp\vint_\Omega(-\ovl{\Delta u})U\d x\right)\ge0.
\end{gather}
In particular, if $a\in C_\theta(0)$ then $\Re\left(a\e^{\vi\theta}\dsp\vint_\Omega(-\ovl{\Delta u})U\d x\right)\ge0.$
\item
\label{lemenereg2}
Assume that $m>0.$ If $u^m\Delta u\in L^1(\Omega)$ then we have
\begin{gather}
\label{lemenereg21}
|1-m|\Re\left(\,\vint_\Omega(-\Delta u)\ovl{g_0^m(u)}\d x\right)\ge2\sqrt{m}\,\left|\Im\left(\,\vint_\Omega(-\Delta u)\ovl{g_0^m(u)}\d x\right)\right|.
\end{gather}
If $a\in C_\theta(m)$ then $\Re\left(a\e^{\vi\theta}\dsp\vint_\Omega(-\ovl{\Delta u})g_0^m(u)\d x\right)\ge0.$
\item
\label{lemenereg3}
Assume that $m\in[0,1].$ We have,
\begin{gather}
\label{lemenereg31}
(1-m)\Re\left(\,\vint_\Omega(-\Delta u)\ovl{g_\eps^m(u)}\d x\right)\ge2\sqrt{m}\,\left|\Im\left(\,\vint_\Omega(-\Delta u)\ovl{g_\eps^m(u)}\d x\right)\right|.
\end{gather}
If $a\in C_\theta(m)$ then $\Re\left(a\e^{\vi\theta}\dsp\vint_\Omega(-\ovl{\Delta u})g_\eps^m(u)\d x\right)\ge0.$
\item
\label{lemenereg4}
We have,
\begin{gather}
\label{lemenereg41}
(p-1)\Re\left(\,\vint_\Omega(-\Delta u)\ovl{h_M^p(u)}\d x\right)\ge2\sqrt{p}\,\left|\Im\left(\,\vint_\Omega(-\Delta u)\ovl{h_M^p(u)}\d x\right)\right|.
\end{gather}
If $b\in C_\theta(p)$ then $\Re\left(b\e^{\vi\theta}\dsp\vint_\Omega(-\ovl{\Delta u})h_M^p(u)\d x\right)\ge0.$
\end{enumerate}
\end{lem}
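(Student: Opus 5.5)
The four items share one scheme: rewrite $\int_\Omega(-\Delta u)\ovl{F(u)}\,\d x$ as $\int_\Omega\nabla u.\ovl{\nabla F(u)}\,\d x$, bound its real and imaginary parts by Lemma~\ref{lemeneregint}, and then convert to the sign statement using the definition \eqref{Cm} of $C_\theta$.

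\textbf{Integration by parts.} Lemma~\ref{lemge} gives $g_\eps^m(u),h_M^p(u)\in H^1_0(\Omega)$ for $u\in H^1_0(\Omega)$. Since $\Delta u\in L^2(\Omega)$, the definition of the distributional Laplacian together with density of $\Dr(\Omega)$ in $H^1_0(\Omega)$ yields
\begin{gather*}
\int_\Omega(-\Delta u)\ovl{F(u)}\,\d x=\int_\Omega\nabla u.\ovl{\nabla F(u)}\,\d x,
\end{gather*}
for $F=g_\eps^m$ or $F=h_M^p$.

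\textbf{Item~\ref{lemenereg3}.} Apply Lemma~\ref{lemeneregint} with $\Omega'=\Omega$, $q=m$, $A=g_\eps^m$; formula \eqref{lemge1} supplies hypothesis \eqref{demlemeneregint1}. This gives \eqref{lemenereg31}.

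\textbf{Item~\ref{lemenereg4}.} Split $\Omega=\{|u|<M\}\cup\{|u|\ge M\}$, both measurable but not open, which is why Lemma~\ref{lemeneregint} allows non-open $\Omega'$. By \eqref{lemge2}, on $\{|u|<M\}$ the gradient has the form \eqref{demlemeneregint1} with $q=p$, $\eps=0$. On $\{|u|\ge M\}$ the integrand is $M^{p-1}|\nabla u|^2\ge0$, which is real. Adding the two contributions gives \eqref{lemenereg41}.

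A technical point: Lemma~\ref{lemeneregint} is stated for $A:H^1(\Omega')\to H^1(\Omega')$. I would instead apply its pointwise computation (the Young's inequality step) directly on the subset. That computation only needs the integrand identity a.e., so it holds verbatim.

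\textbf{Item~\ref{lemenereg2}} ($m>0$, $\eps=0$). Here $g_0^m(u)$ need not be in $H^1$ near $\{u=0\}$ when $m<1$. I would pass to the limit $\eps\searrow0$ in \eqref{lemenereg31}. Pointwise $g_\eps^m(u)\to g_0^m(u)$ with $|g_\eps^m(u)|\le|u|^m$. Hence $|\Delta u\,\ovl{g_\eps^m(u)}|\le|\Delta u||u|^m\in L^1(\Omega)$ by hypothesis, and dominated convergence gives \eqref{lemenereg21}. For $m=1$ the inequality is trivial since the integrand is $|\nabla u|^2$ and the integral is real.

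\textbf{Item~\ref{lemenereg1}.} Approximate $U$ by $g_\eps^0(u)=u/\sqrt{|u|^2+\eps}$. The case $q=0$ of Lemma~\ref{lemeneregint} gives $\Re\int(-\Delta u)\ovl{g_\eps^0(u)}\ge0$. As $\eps\to0$, $g_\eps^0(u)\to U$ on $\{u\ne0\}$ and $\to0$ on $\{u=0\}$, dominated by $|\Delta u|$. The limit set $\{u=0\}$ needs a separate argument, and this is the main obstacle: there one must show $\Delta u=0$ a.e. This holds by the standard fact that for $u\in H^1$ with $\Delta u\in L^1_\loc$, $\nabla u=0$ a.e. on $\{u=0\}$, and by the Stampacchia-type result that $\Delta u=0$ a.e. on level sets. (If this fails for general open sets, I would fall back on the hypothesis $U\Delta u\in L^1(\Omega)$ and Stampacchia's theorem for $W^{2,1}_\loc$.) With $\Delta u=0$ a.e. on $\{u=0\}$, the values of $U$ there contribute nothing, so $\Re\int(-\ovl{\Delta u})U\ge0$. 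For $a\in C_\theta(0)$ we have $a\e^{\vi\theta}=\mu>0$, so the second claim of item~\ref{lemenereg1} follows.

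\textbf{Sign statements for $m>0$.} Write $z=a\e^{\vi\theta}=\alpha+\vi\beta$ and $I=\int(-\ovl{\Delta u})F(u)=X+\vi Y$, with $(1-m)X\ge2\sqrt m|Y|$ and $X\ge0$; the latter holds since $\Re I$ is a sum of nonnegative terms, by the formulas in the proof of Lemma~\ref{lemeneregint}. Then $\Re(zI)=\alpha X-\beta Y$. Since $a\in C_\theta(m)$, we have $\alpha>0$ and $2\sqrt m\,\alpha\ge(1-m)|\beta|$. Multiply the two inequalities:
\begin{gather*}
(1-m)X\cdot2\sqrt m\,\alpha\ge2\sqrt m|Y|(1-m)|\beta|,
\end{gather*}
so $\alpha X\ge|\beta Y|$ when $0<m<1$. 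The cases $m=1$ (where $Y=0$) and $X=0$ (which forces $Y=0$) are immediate. The same argument applies to $b\in C_\theta(p)$ using $|1-p|=p-1$.
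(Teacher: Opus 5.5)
Items~3 and~4, item~1 and the conversion to the sign statements follow the paper's proof essentially line by line. That means integrating by parts against $g_\eps^m(u),h_M^p(u)\in H^1_0(\Omega)$, applying Lemma~\ref{lemeneregint} on $\Omega$ and on $\{|u|<M\}$ respectively, and letting $\eps\searrow0$. The genuine gap is in item~2.

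There the statement only assumes $m>0$, which is why the factor is $|1-m|$. The case $m>1$ is needed later: for instance, testing the equation against $-\e^{-\vi\theta}\Delta u$ in the proof of Theorem~\ref{thmstrongH2} requires item~2 with exponent $p>1$ and $b\in C_\theta(p)$. You obtain \eqref{lemenereg21} by letting $\eps\searrow0$ in \eqref{lemenereg31}, but \eqref{lemenereg31} is only available for $m\in[0,1]$, for two reasons:
\begin{itemize}
\item Lemma~\ref{lemge} gives $g_\eps^m(u)\in H^1_0(\Omega)$ only for $m\le1$. For $m>1$ the map $z\mapsto(|z|^2+\eps)^{\frac{m-1}2}z$ is not globally Lipschitz, and $g_\eps^m(u)$ need not belong to $H^1_0(\Omega)$. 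So neither the integration by parts nor Lemma~\ref{lemeneregint} is justified.
\item Your domination $|g_\eps^m(u)|\le|u|^m$ reverses when $m>1$.
\end{itemize}
So nothing in your proposal proves \eqref{lemenereg21} for $m>1$.

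The fix, which is what the paper does, uses your own item~4. Take $p=m>1$ there, so that \eqref{lemenereg41} holds for $h_M^m(u)$ for every $M>0$, and let $M\nearrow\infty$. Pointwise, $h_M^m(u)\to|u|^{m-1}u=g_0^m(u)$, and $|h_M^m(u)|\le|u|^m$ because $M^{m-1}|u|\le|u|^m$ on $\{|u|\ge M\}$. The hypothesis $|u|^m\Delta u\in L^1(\Omega)$ then supplies the dominating function, so \eqref{lemenereg21} follows with $|1-m|=m-1$, and your sign argument applies unchanged.

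There are also two smaller points in item~1:
\begin{itemize}
\item When $|\Omega|=\infty$, $|\Delta u|$ is not an admissible dominating function, since $\Delta u$ is only in $L^2(\Omega)$. Dominate instead by $|U\Delta u|\in L^1(\Omega)$, using $|g_\eps^0(u)|\le1=|U|$ on $\{u\neq0\}$ and $g_\eps^0(u)=0$ on $\{u=0\}$. This is exactly where the hypothesis $U\Delta u\in L^1(\Omega)$ is used.
\item The fact that $\Delta u=0$ a.e.\ on $\{u=0\}$ follows cleanly from interior regularity $u\in H^2_\loc(\Omega)$ together with Stampacchia's theorem applied to $u$ and then to each $\partial_ju\in H^1_\loc(\Omega)$. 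Your $W^{2,1}_\loc$ fallback is not needed, and it would not follow from $U\Delta u\in L^1(\Omega)$ anyway.
\end{itemize}
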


\begin{proof*}
We first establish \eqref{lemenereg31} (and so $m\le1).$ By Lemma~\ref{lemge}, we may apply Lemma~\ref{lemeneregint} with $q=m,$ $\Omega^\p=\Omega$ and $A=g_\eps^m,$ from which \eqref{lemenereg31} follows since
\begin{gather*}
\vint_\Omega(-\Delta u)\ovl{g_\eps^m(u)}\d x=\vint_\Omega\nabla u.\ovl{\nabla g_\eps^m(u)}\d x.
\end{gather*}
Letting $\eps\searrow0$ in \eqref{lemenereg31} with the help of the dominated convergence Theorem, we obtain \eqref{lemenereg21} for $0<m\le1.$ We continue with the proof of \eqref{lemenereg41}. Still by Lemma~\ref{lemge}, we may apply Lemma~\ref{lemeneregint} with $q=p,$ $\Omega^\p=\Omega_M=\left\{|u|<M\right\}$ and $A=h_M^p.$ It follows that
\begin{align*}
	&	\; (p-1)\Re\left(\int_\Omega\nabla u.\ovl{\nabla h_M^p(u)}\d x\right)
			=(p-1)\Re\left(\int_{\Omega_M}\nabla u.\ovl{\nabla g_0^p(u)}\d x+M^{p-1}\int_{\Omega_M^\co}|\nabla u|^2\d x\right)	\\
  \ge	&	\; 2\sqrt{p}\,\Im\left(\int_{\Omega_M}\nabla u.\ovl{\nabla g_0^p(u)}\d x+M^{p-1}\int_{\Omega_M^\co}|\nabla u|^2\d x\right)
			=2\sqrt{p}\,\Im\left(\int_\Omega\nabla u\ovl{\nabla h_M^p(u)}\d x\right),
\end{align*}
from which \eqref{lemenereg41} follows since by Lemma~\ref{lemge},
\begin{gather*}
\vint_\Omega(-\Delta u)\ovl{h_M^p(u)}\d x=\vint_\Omega\nabla u.\ovl{\nabla h_M^p(u)}\d x.
\end{gather*}
With the help of the dominated convergence Theorem, we let $M\nearrow\infty$ to the above and we get \eqref{lemenereg21} for $m>1.$ Now we turn out to the proof of the last inequalities of Properties~\ref{lemenereg2}--\ref{lemenereg4}. Let $a\in C_\theta(m)$ and let $I$ be the integral in \eqref{lemenereg21}. We have by \eqref{lemenereg21} that,
\begin{gather*}
2\sqrt m\,\Re(a\e^{\vi\theta} I)=2\sqrt m\,\Re(a\e^{\vi\theta})\Re(I)+2\sqrt m\,\Im(a\e^{\vi\theta})\Im(\ovl I)\ge0,
\end{gather*}
which is the last inequality of Property~\ref{lemenereg2}. The last inequalities of Properties~\ref{lemenereg3}--\ref{lemenereg4} are obtained in the same way. To conclude the proof of the lemma, it remains to establish Property~\ref{lemenereg1}. Assume that $m=0$ and let $U$ be as in the lemma. Set $\omega=\big\{u\neq0\big\}.$ Since $u\in D(L)$ then $u\in H^2_\loc(\Omega)$ and it follows that $\Delta u=0,$ a.e.\,in $\omega^\co.$ 
It follows that the integrals in \eqref{lemenereg11} and \eqref{lemenereg31} are the same if we replace $\Omega$ with $\omega.$ Then, applying \eqref{lemenereg31} applied with $m=0,$ and using Lebesgue's Theorem, we let $\eps\searrow0$ in \eqref{lemenereg31} and obtain \eqref{lemenereg11}, while the last inequality is obvious.
\medskip
\end{proof*}

\begin{lem}
\label{lemL}
The operator $(L,D(L))$ is maximal monotone on $L^2(\Omega).$
\end{lem}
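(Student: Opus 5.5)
We prove that $L$ is monotone (accretive) on $L^2(\Omega)$ and then that $I+L$ is onto, using the Lax--Milgram theorem on $H^1_0(\Omega)$. Recall that the scalar product is $(u,v)_{L^2(\Omega)}=\Re\int_\Omega u\ovl v\,\d x$.

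\emph{Monotonicity.} Let $u,v\in D(L)$ and set $w=u-v\in D(L)$. Since $w\in H^1_0(\Omega)$ and $\Delta w\in L^2(\Omega)$, Green's formula $\int_\Omega(-\Delta w)\ovl w\,\d x=\|\nabla w\|_{L^2(\Omega)}^2$ holds. It follows from the density of $\Dr(\Omega)$ in $H^1_0(\Omega)$, because $-\Delta w\in L^2(\Omega)\subset H^{-1}(\Omega)$. Hence
\begin{gather*}
(Lu-Lv,u-v)_{L^2(\Omega)}=\Re\big(\e^{\vi\theta}\big)\|\nabla w\|_{L^2(\Omega)}^2+\Re\big(\gamma\e^{\vi\theta}\big)\|w\|_{L^2(\Omega)}^2\ge0.
\end{gather*}
Here we used $\cos\theta>0$ by~\eqref{t} and $\Re(\gamma\e^{\vi\theta})\ge0$ by~\eqref{g}.

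\emph{Range condition.} Let $f\in L^2(\Omega)$. We look for $u\in D(L)$ with $u+Lu=f$. Since $\e^{\vi\theta}\neq0$, this is equivalent to
\begin{gather*}
-\Delta u+(\gamma+\e^{-\vi\theta})u=\e^{-\vi\theta}f.
\end{gather*}
Consider on $H^1_0(\Omega)$, viewed as a complex Hilbert space, the sesquilinear form
\begin{gather*}
\mathscr{a}(u,\vphi)=\e^{\vi\theta}\int_\Omega\nabla u.\ovl{\nabla\vphi}\,\d x+(\gamma\e^{\vi\theta}+1)\int_\Omega u\ovl\vphi\,\d x.
\end{gather*}
The form $\mathscr{a}$ is continuous. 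It is also coercive:
\begin{gather*}
\Re\,\mathscr{a}(u,u)\ge\cos\theta\,\|\nabla u\|_{L^2(\Omega)}^2+\|u\|_{L^2(\Omega)}^2\ge\min\{\cos\theta,1\}\,\|u\|_{H^1_0(\Omega)}^2.
\end{gather*}
By the complex Lax--Milgram theorem, applied to the antilinear functional $\vphi\longmapsto\int_\Omega f\ovl\vphi\,\d x$, there exists $u\in H^1_0(\Omega)$ such that $\mathscr{a}(u,\vphi)=\int_\Omega f\ovl\vphi\,\d x$ for all $\vphi\in H^1_0(\Omega)$. If one prefers to keep the real-vector-space convention of the paper, one instead applies the real Lax--Milgram theorem to $\Re\,\mathscr{a}$ and recovers the complex identity by testing with both $\vphi$ and $\vi\vphi$.

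Taking $\vphi\in\Dr(\Omega)$ in this identity gives, in $\Dr^\p(\Omega)$,
\begin{gather*}
-\e^{\vi\theta}\Delta u=f-(\gamma\e^{\vi\theta}+1)u\in L^2(\Omega).
\end{gather*}
Therefore $\Delta u\in L^2(\Omega)$, so $u\in D(L)$, and $u+Lu=f$. Thus $R(I+L)=L^2(\Omega)$, and $L$ is $m$-accretive, equivalently maximal monotone.

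The only point needing care is the coercivity. It relies on $\cos\theta>0$ (that is, $\theta\ne\pm\frac\pi2$) and on $\Re(\gamma\e^{\vi\theta})\ge0$. No regularity of $\Omega$ is used, since $D(L)$ only requires $\Delta u\in L^2(\Omega)$ and not $u\in H^2(\Omega)$. This is why the argument works on arbitrary open sets.
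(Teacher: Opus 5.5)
Your proof is correct and takes the same route as the paper: monotonicity follows from $\cos\theta>0$ and $\Re(\gamma\e^{\vi\theta})\ge0$ via the Green identity on $H^1_0(\Omega)$, and $R(I+L)=L^2(\Omega)$ follows from Lax--Milgram, with $\Delta u\in L^2(\Omega)$ read off from the equation in $\Dr^\p(\Omega)$. The paper's one-line proof (``by \eqref{t}, \eqref{g} and the Lax--Milgram Theorem'') is exactly these steps compressed.
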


\begin{proof*}
By \eqref{t}, \eqref{g} and the Lax-Milgram Theorem, $L$ is monotone and $R(I+L)=L^2(\Omega).$
\medskip
\end{proof*}

\begin{lem}
\label{lemg}
Let $\big(0\le m\le1$ and $\eps>0\big)$ or $\big(m>0$ and $\eps=0\big),$ and let $u,v\in L^1_\loc(\Omega)$ be such that $\big(g_\eps^m(u)-g_\eps^m(v)\big)\big(\ovl{u-v}\big)\in L^1(\Omega).$ If $a\in C_\theta(m)$ then we have,
\begin{gather*}
\Re\left(a\e^{\vi\theta}\vint_\Omega\big(g_\eps^m(u)-g_\eps^m(v)\big)(\ovl{u-v})\d x\right)\ge0.
\end{gather*}
\end{lem}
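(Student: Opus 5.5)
The argument is pointwise: I will prove that for almost every $x\in\Omega$,
\begin{gather*}
\Re\Big(a\e^{\vi\theta}\big(g_\eps^m(u)-g_\eps^m(v)\big)(\ovl{u-v})\Big)\ge0 .
\end{gather*}
Integrating this over $\Omega$ then gives the lemma, since the integrand lies in $L^1(\Omega)$ by hypothesis. So everything reduces to an inequality for complex numbers. Fix $z_1,z_2\in\C$ and set $w=\big(g_\eps^m(z_1)-g_\eps^m(z_2)\big)\ovl{(z_1-z_2)}$. By the same computation as at the end of the proof of Lemma~\ref{lemenereg}, with $\alpha=a\e^{\vi\theta}$, it suffices to show
\begin{gather}
\label{planpt}
(1-m)\Re(w)\ge2\sqrt m\,|\Im(w)|.
\end{gather}
Indeed, if \eqref{planpt} holds, then $2\sqrt m\,\Re(\alpha w)=2\sqrt m\,\Re(\alpha)\Re(w)-2\sqrt m\,\Im(\alpha)\Im(w)$. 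By \eqref{Cm} we have $\Re\alpha>0$ and $2\sqrt m\Re\alpha\ge(1-m)|\Im\alpha|$, so this quantity is bounded below by $|\Im\alpha|\big((1-m)\Re w-2\sqrt m|\Im w|\big)\ge0$ when $\Re w\ge0$. When $m=0$, we have $\Im\alpha=0$ and only $\Re w\ge0$ is needed. Note that \eqref{planpt} forces $\Re w\ge0$ when $m<1$. For $m=1$ we have $g_\eps^1(z)=z$, so $w=|z_1-z_2|^2$ and the claim is trivial.

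To prove \eqref{planpt} for $m<1$, I use the derivative along a segment. If $\eps>0$, then $g=g_\eps^m$ is $C^1$ as a map $\R^2\to\R^2$. Write $\zeta(t)=z_2+t(z_1-z_2)$ and $h=z_1-z_2$. Then
\begin{gather*}
w=\int_0^1 Dg(\zeta(t))h\,\ovl h\,\d t,
\end{gather*}
and $Dg(\zeta)h\,\ovl h$ is exactly the pointwise integrand handled in Lemma~\ref{lemeneregint}. Formula \eqref{demlemeneregint1} with $\nabla u$ replaced by $h$ and $u$ by $\zeta$ gives
\begin{gather*}
Dg(\zeta)h\,\ovl h=(|\zeta|^2+\eps)^{\frac{m-1}2}|h|^2+(m-1)\Re(\ovl\zeta h)(|\zeta|^2+\eps)^{\frac{m-3}2}\zeta\ovl h .
\end{gather*}
The proof of Lemma~\ref{lemeneregint}, namely the splitting $|h|^2|\zeta|^2=|\Re(\ovl\zeta h)|^2+|\Im(\ovl\zeta h)|^2$ followed by Young's inequality, shows that each value satisfies $(1-m)\Re\ge2\sqrt m|\Im|$. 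This is a closed convex cone condition, so it is preserved under integration in $t$, and \eqref{planpt} follows.

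For $\eps=0$ and $0<m<1$, I pass to the limit $\eps\searrow0$ pointwise, using $g_\eps^m(z)\to g_0^m(z)$ for every $z$, including $z=0$ thanks to the convention $g_0^m(0)=0$. Again the cone is closed, so the inequality survives the limit.

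The main obstacle is this segment argument when $\eps=0$, since $g_0^m$ is not differentiable at $0$ and the segment may pass through the origin. The approximation by $\eps>0$ avoids the problem, so the real work is only checking that Lemma~\ref{lemeneregint}'s algebra applies verbatim with an arbitrary direction $h\in\C$ in place of $\nabla u$.
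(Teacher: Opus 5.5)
Your reduction to a pointwise cone inequality, and the last step from that inequality to $\Re\big(a\e^{\vi\theta}w\big)\ge0$, are the same as in the paper. The difference is that the paper cites the pointwise inequality (Liskevich and Perel'muter for $\eps=0$, an earlier paper of the authors for $\eps>0$). You prove it yourself by integrating the differential along the segment $[z_2,z_1]$ and reusing the algebra of Lemma~\ref{lemeneregint}, which is a clean self-contained route. You also handle $m=0$ properly, whereas the paper's chain, multiplied by $2\sqrt m$, degenerates there to $0\ge0$.

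However, there is a gap. You only establish $(1-m)\Re(w)\ge2\sqrt m\,|\Im(w)|$ for $m\le1$: trivially for $m=1$, by the segment argument for $m<1$. The second alternative of the lemma, $m>0$ and $\eps=0$, has no upper bound on $m$. The case $m>1$ is not incidental: the paper applies the lemma with $m=p>1$ and $a=b\in C_\theta(p)$ to $g_0^p(u)=|u|^{p-1}u$ in Lemma~\ref{lemAmon} and Proposition~\ref{propdep}. For $m>1$ your inequality is false as stated, since its left-hand side is negative as soon as $\Re(w)>0$. This is why \eqref{Cm} and the paper's proof use $|1-m|$.

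The repair fits your framework; replace $(1-m)$ by $|1-m|$ everywhere. For $m>1$ the map $z\longmapsto|z|^{m-1}z$ is already $C^1$ on $\R^2$, with zero differential at the origin, so no $\eps$-regularization is needed. Writing $\beta=\ovl\zeta h$, for $\zeta\neq0$ you get $Dg(\zeta)h\,\ovl h=|\zeta|^{m-3}\big(m\Re(\beta)^2+\Im(\beta)^2-\vi(m-1)\Re(\beta)\,\Im(\beta)\big)$. Hence $(m-1)\Re\ge2\sqrt m\,|\Im|$ along the segment; this is just Lemma~\ref{lemeneregint}'s computation with $q=m>1$. The cone is still closed and convex, so it survives integration in $t$. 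The final step then goes through verbatim with $|1-m|$, because the cone condition forces $\Re(w)\ge0$ for every $m\neq1$.
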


\begin{proof*}
Let $u,v\in L^1_\loc(\Omega)$ be such that $\big(g_\eps^m(u)-g_\eps^m(v)\big)\big(\ovl{u-v}\big)\in L^1(\Omega),$ and let $I$ be the integral in the lemma. We have by Liskevich and Perel$^\p$muter~\cite[Lemma~2.2]{MR1224619} $(m>0$ and $\eps=0),$ \cite[Lemma~5.6]{MR4340780} $(m\in[0,1]$ and $\eps>0)$ and \eqref{a} that $2\sqrt m\,\Re(a\e^{\vi\theta}I)=2\sqrt m\left(\Re(a\e^{\vi\theta})\Re(I)-\Im(a\e^{\vi\theta})\Im(I)\right)$ and
\begin{gather*}
2\sqrt m\left(\Re(a\e^{\vi\theta})\Re(I)-\Im(a\e^{\vi\theta})\Im(I)\right)\ge\left(2\sqrt m\,\Re(a\e^{\vi\theta})-|1-m|\,|\Im(a\e^{\vi\theta})|\right)\Re(I)\ge0.
\end{gather*}
Hence the result.
\medskip
\end{proof*}

\begin{lem}
\label{lemh}
Let $p>1$ and $M>0.$ Let for $z\in\C,$ $\phi_M(z)=|z|^{p-1}z\1_{\{|z|<M\}}+M^{p-1}z\1_{\{|z|\ge M\}}.$ Then for any $(z_1,z_2)\in\C^2,$
\begin{gather}
\label{lemh1}
(p-1)\Re\left(\big(\phi_M(z_1)-\phi_M(z_2)\big)\big(\ovl{z_1-z_2}\big)\right)
\ge2\sqrt p\left|\Im\left(\big(\phi_M(z_1)-\phi_M(z_2)\big)\big(\ovl{z_1-z_2}\big)\right)\right|.
\end{gather}
In addition, if $b\in C_\theta(p)$ then for any $u,v\in L^1_\loc(\Omega)$ such that $\big(h_M^p(u)-h_M^p(v)\big)(u-v)\in L^1(\Omega),$
\begin{gather}
\label{lemh2}
\Re\left(b\e^{\vi\theta}\vint_\Omega\big(h_M^p(u)-h_M^p(v)\big)(\ovl{u-v})\d x\right)\ge0,
\end{gather}
\end{lem}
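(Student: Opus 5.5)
The plan is to prove the pointwise inequality~\eqref{lemh1} by representing $\phi_M(z_1)-\phi_M(z_2)$ as an integral of the derivative of $\phi_M$ along the segment $[z_2,z_1]$. The inequality will hold for the integrand at almost every point of the segment. It then survives integration, because the set
\[
K=\big\{\zeta\in\C;\ (p-1)\Re(\zeta)\ge2\sqrt p\,|\Im(\zeta)|\big\}
\]
is a closed convex cone. Estimate~\eqref{lemh2} will then follow by integrating over $\Omega$ and arguing exactly as in the proof of Lemma~\ref{lemg}.

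\textbf{Step 1: integral representation.} If $z_1=z_2$ there is nothing to prove, so set $w=z_1-z_2\neq0$ and $z_t=z_2+tw$ for $t\in[0,1]$. Seen as a map $\R^2\to\R^2$, $\phi_M$ is globally Lipschitz. It is $C^1$ on $\{|z|<M\}$: since $p>1$, its differential vanishes at $0$ and is continuous there. It is linear on $\{|z|>M\}$. The line through $z_1$ and $z_2$ meets the circle $\{|z|=M\}$ in at most two points. Hence $t\mapsto\phi_M(z_t)$ is Lipschitz, differentiable outside at most two values of $t$, and
\[
\phi_M(z_1)-\phi_M(z_2)=\vint_0^1 D\phi_M(z_t)w\,\d t,
\qquad\text{so}\qquad
\big(\phi_M(z_1)-\phi_M(z_2)\big)\ovl w=\vint_0^1 D\phi_M(z_t)w\,\ovl w\,\d t .
\]
This justification, which avoids the non-$C^1$ circle noted in Remark~\ref{rmklemge2}, is the only delicate point. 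The fact that a segment crosses the circle at most twice is what makes it work.

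\textbf{Step 2: pointwise cone inequality.} On $\{|z|>M\}$ we have $D\phi_M(z)w\,\ovl w=M^{p-1}|w|^2\ge0$, which lies in $K$. On $\{0<|z|<M\}$ we have
\[
D\phi_M(z)w=|z|^{p-1}w+(p-1)|z|^{p-3}\Re(\ovl zw)z .
\]
Write $\ovl zw=\alpha+\vi\beta$ and use $z\ovl w=\ovl{\ovl zw}$. Then
\[
D\phi_M(z)w\,\ovl w=|z|^{p-3}\big(p\alpha^2+\beta^2-\vi(p-1)\alpha\beta\big).
\]
Young's inequality gives $2\sqrt p\,|\alpha\beta|\le p\alpha^2+\beta^2$, so this quantity lies in $K$. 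At $z=0$ the derivative is $0$, which is also in $K$. Since $K$ is a closed convex cone, the integral over $t\in[0,1]$ lies in $K$, and this is exactly~\eqref{lemh1}.

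\textbf{Step 3: the integrated statement.} Let $u,v$ be as in the statement and
\[
I=\vint_\Omega\big(h_M^p(u)-h_M^p(v)\big)\big(\ovl{u-v}\big)\d x .
\]
Applying~\eqref{lemh1} pointwise with $z_1=u(x)$ and $z_2=v(x)$, and integrating (the integrand is in $L^1$), gives $(p-1)\Re(I)\ge2\sqrt p\,|\Im(I)|$ and $\Re(I)\ge0$. Since $b\in C_\theta(p)$, we get as in Lemma~\ref{lemg}
\[
2\sqrt p\,\Re(b\e^{\vi\theta}I)\ge\big(2\sqrt p\,\Re(b\e^{\vi\theta})-(p-1)|\Im(b\e^{\vi\theta})|\big)\Re(I)\ge0,
\]
which yields~\eqref{lemh2}.
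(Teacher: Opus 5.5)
Your proof is correct, and it reaches \eqref{lemh1} by a genuinely different route from the paper's.

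The paper argues by cases. The trivial ones are $z_1z_2=0$ and $|z_1|,|z_2|\ge M$. The interior case $|z_1|,|z_2|<M$ is quoted from Liskevich--Perel'muter. The mixed case $0<|z_2|<M\le|z_1|$ is done by explicit computation: with $t=|z_1|$, $s=|z_2|$ and $\theta=\Arg(\ovl{z_1}z_2)$, the paper writes $(\Im Z_M/\Re Z_M)^2$ as $A(1-\cos^2\theta)/(B-C\cos\theta)^2$ and maximizes in $\cos\theta$. It then bounds the resulting function of $t,s$ by $(p-1)^2/(4p)$ through a Cauchy--Schwarz estimate on $\int_s^M\sigma^{p-2}\,\d\sigma$, taking the positivity of $\Re Z_M$ from a cited lemma.

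You replace all of this with one mechanism. The quantity $D\phi_M(z)w\,\ovl w$ lies in the closed convex sector $K$ at every point off the circle $\{|z|=M\}$; this is the same Young-inequality computation as in Lemma~\ref{lemeneregint}. Integration along the segment $[z_2,z_1]$, which meets the circle at most twice, then transfers the property to $(\phi_M(z_1)-\phi_M(z_2))\ovl{(z_1-z_2)}$.

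What your route buys:
\begin{enumerate}
\item It treats all configurations uniformly and is shorter.
\item It recovers both the interior case and $\Re Z_M\ge0$ without citation.
\item It shows transparently why the truncation costs nothing: the exterior region only contributes the nonnegative real number $M^{p-1}|w|^2$.
\item The same argument directly gives the analogous sector inequality for $g_\eps^m$ with $\eps>0$.
\end{enumerate}
The paper's route is purely algebraic and gives an explicit sharp-type bound in terms of $t$ and $s$ for the mixed case.

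Your Step~3 is the same as the paper's derivation of \eqref{lemh2}. You also correctly write $\Im(I)$ in the middle term, where the paper's displayed computation has $\Re$ by mistake.
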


\begin{proof*}
Let the assumption of the lemma be fulfilled. We begin by proving \eqref{lemh1}. This result is obvious if $z_1z_2=0$ or if $|z_1|,|z_2|\ge M.$ If $|z_1|,|z_2|<M$ then \eqref{lemh1} due to Liskevich and Perel$^\p$muter~\cite[Lemma~2.2]{MR1224619}. So, without loss of generality, we may assume that $0<|z_2|<M\le|z_1|.$ Let,
\begin{gather*}
Z_M=\big(\phi_M(z_1)-\phi_M(z_2)\big)\big(\ovl{z_1-z_2}\big) \; \text{ and } \; F_M(t,s,\theta)=\frac{|\Im(Z_M)|}{\Re(Z_M)}.
\end{gather*}
(By \cite[Lemma~5.3]{MR4340780}, $\Re(Z_M)>0.)$ Setting $t=|z_1|,$ $s=|z_2|$ and $\theta=\Arg(\ovl{z_1}z_2),$ we have to show that,
\begin{gather}
\label{demlemh1}
F_M(t,s,\theta)^2\le\frac{(p-1)^2}{4p}.
\end{gather}
We have that,
\begin{gather*}
F_M(t,s,\theta)^2=\frac{t^2s^2((M^{p-1}-s^{p-1})^2(1-\cos^2\theta)}{\big((M^{p-1}t^2+s^{p+1})-ts(M^{p-1}+s^{p-1})\cos\theta\big)^2}
\eqdef\frac{A(1-\cos^2\theta)}{(B-C\cos\theta)^2},
\end{gather*}
where we have used that $\Re(z_1\ovl{z_2})=ts\cos\theta$ and $\Im(\ovl{z_1}z_2)=ts\sin\theta.$ We claim that
\begin{gather}
\label{demlemh2}
F_M(t,s,\theta)^2\le\dfrac{t^2s^2(M^{p-1}-s^{p-1})^2}{(t^2-s^2)(M^{2p-2}t^2-s^{2p})}.
\end{gather}
We write $\sigma=\cos\theta$ and $g(\sigma)=F_M(t,s,\theta)^2.$ Note that since $t\ge M>s>0$ and $\Re(Z_M)>0,$ we have $A>0,$ and $B-C\sigma>0,$ for any $\sigma\in[-1,1].$ Since, $\vmax_{\sigma\in[-1,1]}g(\sigma)=g\left(\frac{C}{B}\right)=\frac{A}{B^2-C^2},$ it follows that, $\vsup_{\theta\in\R}F_\eps(t,s,\theta)^2\le\frac{A}{B^2-C^2},$ which gives \eqref{demlemh2}. Moreover, by Cauchy-Schwarz' inequality, we have,
\begin{align*}
	&	\; \frac{(M^{p-1}-s^{p-1})^2}{(p-1)^2}=\left(\:\vint_s^M\sigma^{p-2}\d\sigma\right)^2
			=\left(\:\vint_s^M\sigma^{p-\frac12}\sigma^{-\frac32}\d\sigma\right)^2							\\
  \le	&	\; \vint_s^M\sigma^{2p-1}\d\sigma\vint_s^t\sigma^{-3}\d\sigma=\frac{(M^{2p}-s^{2p})(s^{-2}-t^{-2})}{4p}
			\le\frac{t^2-s^2}{4\,p\,t^2s^2}(M^{2p-2}t^2-s^{2p}).
\end{align*}
Putting together \eqref{demlemh2} with above, we get \eqref{demlemh1} and then \eqref{lemh1}. Now we turn out to the proof of \eqref{lemh2} by using \eqref{lemh1}. Let $(z_1,z_2)\in\C^2.$ We have that,
\begin{align*}
	& \; 2\sqrt p\,\Re\left(b\e^{\vi\theta}\big(\phi_M(z_1)-\phi_M(z_2)\big)\big(\ovl{z_1-z_2}\big)\right)		\\
   =	& \; 2\sqrt p\left(\Re(b\e^{\vi\theta})\Re\left(\big(\phi_M(z_1)-\phi_M(z_2)\big)\big(\ovl{z_1-z_2}\big)\right)
   		-\Im(b\e^{\vi\theta})\Re\left(\big(\phi_M(z_1)-\phi_M(z_2)\big)\big(\ovl{z_1-z_2}\big)\right)\right)		\\
  \ge	& \; \left(2\sqrt p\,\Re(b\e^{\vi\theta})-(p-1)|\Im(b\e^{\vi\theta})|\right)
		\Re\left(\big(\phi_M(z_1)-\phi_M(z_2)\big)\big(\ovl{z_1-z_2}\big)\right)\ge0,
 \end{align*}
from which we get \eqref{lemh2}.
\medskip
\end{proof*}

\noindent
Now, Let us prove the monotonicity of the operators introduced at the beginning of this section.

\begin{lem}
\label{lemAmon}
Let $(m,p)\in[0,1]\times(1,\infty)$ and $(a,b)\in C_\theta(m)\times C_\theta(p).$ Then $(A_0^m,D(A_0^m))$ is monotone.
\end{lem}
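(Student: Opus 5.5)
To prove that $A_0^m$ is monotone, I will show that for any $u,v\in D(A_0^m)$ and any choice of elements of $A_0^mu$ and $A_0^mv$,
\begin{gather*}
\Re\vint_\Omega\big(A_0^mu-A_0^mv\big)\ovl{(u-v)}\d x\ge0.
\end{gather*}
I will split the difference into the linear part $L(u-v)$, the damping part and the superlinear part, and check each term separately.

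The linear part is handled by Lemma~\ref{lemL}: $L$ is monotone, so $\Re\big(L(u-v),u-v\big)\ge0$. Here one uses $u-v\in D(L)$, $\Re\big(-\e^{\vi\theta}\int\Delta w\,\ovl w\big)=\cos\theta\|\nabla w\|_{L^2}^2\ge0$ by \eqref{t}, and $\Re(\gamma\e^{\vi\theta})\ge0$ by \eqref{g}.

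For the damping part with $m>0$, I apply Lemma~\ref{lemg} with $\eps=0$. First I need the integrability hypothesis: $(g_0^m(u)-g_0^m(v))\ovl{(u-v)}\in L^1(\Omega)$. Since $|g_0^m(u)|=|u|^m$ and $u\in L^{2m}(\Omega)$, we get $g_0^m(u)\in L^2(\Omega)$, and likewise for $v$. As $u-v\in L^2(\Omega)$, the product is in $L^1$ by Cauchy--Schwarz. The lemma then gives nonnegativity of $\Re\big(a\e^{\vi\theta}\int(g_0^m(u)-g_0^m(v))\ovl{(u-v)}\big)$.

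For the damping part with $m=0$, I take saturated sections $U\in B_0^0u$ and $V\in B_0^0v$, so $U,V\in L^2\cap L^\infty$ and $a\e^{\vi\theta}=\mu>0$ by \eqref{C0'}. It then suffices to show pointwise that $\Re\big((U-V)\ovl{(u-v)}\big)\ge0$ a.e., which is the monotonicity of the subdifferential of $|\cdot|$. I would check it casewise:
\begin{itemize}
\item where $u,v\neq0$: $\Re(U\ovl u)=|u|$ and $\Re(U\ovl v)\le|v|$, and symmetrically for $V$, so the real part is $\ge|u|+|v|-|v|-|u|=0$;
\item where $u=0$: the expression reduces to $\Re(-U\ovl v+V\ovl v)=|v|-\Re(U\ovl v)\ge0$ since $|U|\le1$;
\item the case $v=0$ is symmetric.
\end{itemize}

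For the superlinear part, Lemma~\ref{lemh} only covers the truncation $h_M^p$, but $g_0^p$ is needed here. Since $u,v\in L^{2p}$, we have $|u|^{p-1}u\in L^2$, so integrability holds. Then either I apply \eqref{lemh1} pointwise with $M>\max(|z_1|,|z_2|)$, where $\phi_M(z)=|z|^{p-1}z$ so that the Liskevich--Perel$'$muter inequality holds for $g_0^p$ directly, and conclude exactly as in the proof of \eqref{lemh2}; or I pass $M\to\infty$ in \eqref{lemh2} by dominated convergence, with dominant $(|u|^p+|v|^p)|u-v|\in L^1$. Finally, if $b=0$ this term simply vanishes.

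Summing the three nonnegative contributions gives monotonicity. The only delicate points are verifying the integrability hypotheses from the domain $D(A_0^m)$ and the pointwise saturated-section argument when $m=0$; neither is a serious obstacle.
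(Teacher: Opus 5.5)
Your proof is correct and follows the paper's argument: monotonicity of $L$ (Lemma~\ref{lemL}) plus nonnegativity of each nonlinear contribution, using $a\e^{\vi\theta}=\mu>0$ when $m=0$. The differences are minor. For the $b$-term, the paper applies Lemma~\ref{lemg} directly: its case ``$m>0$, $\eps=0$'' puts no upper bound on the exponent, so it covers $g_0^p$ with $b\in C_\theta(p)$, and your detour through Lemma~\ref{lemh} is unnecessary (though both of your variants work). For $m=0$, the paper invokes an external lemma on $\Omega\cap B(0,n)$ and then Lebesgue's theorem, whereas your elementary pointwise inequality $\Re\big((U-V)\ovl{(u-v)}\big)\ge0$ a.e., together with $U,V,u-v\in L^2(\Omega)$, gives the conclusion directly.
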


\begin{proof*}
If $m>0$ then the result comes from Lemmas~\ref{lemL} and \ref{lemg}. Now, assume that $m=0.$ Since $a\in C_\theta(0),$ we have by \eqref{C0'} that $a=\mu\e^{-\vi\theta},$ for some positive real number $\mu.$ Let $u_1,u_2\in D(A_0^0)$ and $(V_1,V_2)\in A_0^0u_1\times A_0^0u_2.$ Then for each $j\in\{1,2\},$ there exists $U_j\in B_0^0u_j$ such that $V_j=L u_j+\mu U_j+b\e^{\vi\theta}g_0^p(u_j).$ By accretiveness of the operator $L$ (Lemma~\ref{lemL}) and Lemma~\ref{lemg},
\begin{gather*}
(V_1-V_2,u_1-u_2)_{L^2(\Omega)}\ge\mu(U_1-U_2,u_1-u_2)_{L^2(\Omega)}.
\end{gather*}
By \cite[Lemma~6.1]{MR4725781}, we have for any $n\in\N$ that $\Re\left(\int_{\Omega\cap B(0,n)}(U_1-U_2)\big(\ovl{u_1-u_2}\big)\d x\right)\ge0.$ The result follows from Lebesgue's Theorem.
\medskip
\end{proof*}

\begin{prop}
\label{propAe}
Let $M>
0,$ $(m,p)\in[0,1]\times(1,\infty),$ $(a,b)\in C_\theta(m)\times C_\theta(p)$ and $\eps>0.$ Then the operators $(A_\eps^m,D(A_\eps^m)),$ with $m<1,$ and $(A_0^1,D(A_0^1))$ are maximal monotone on $L^2(\Omega).$
\end{prop}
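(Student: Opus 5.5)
We prove that $A=A_\eps^m$ (with $m<1$, $\eps>0$), or $A=A_0^1$, is maximal monotone on $L^2(\Omega)$. We write $A=L+B$ with $B=B_\eps^m$ (respectively $B=B_0^1$, for which $g_0^1(u)=u$). The operator $B$ is defined on all of $L^2(\Omega)$, and it is globally Lipschitz continuous there, since $g_\eps^m$ and $h_M^p$ are Lipschitz by the proof of Lemma~\ref{lemge}. Monotonicity of $B$ follows from Lemma~\ref{lemg}, applied to $g_\eps^m$ with $a\in C_\theta(m)$, and Lemma~\ref{lemh}, applied to $h_M^p$ with $b\in C_\theta(p)$; for $A_0^1$ the term $a\e^{\vi\theta}u$ is monotone because $\Re(a\e^{\vi\theta})>0$. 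We first handle the trivial case $b=0$: the $h_M^p$ term simply drops out. A continuous, monotone, everywhere defined operator on a Hilbert space is maximal monotone (Brezis). By Lemma~\ref{lemL}, $L$ is maximal monotone. Hence, by the classical perturbation result that the sum of a maximal monotone operator and a Lipschitz (or continuous) monotone operator defined everywhere is maximal monotone (Brezis, Lemma~2.4 / Corollary~2.7 in \cite{MR0348562}), $A=L+B$ is maximal monotone with $D(A)=D(L)$.

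If I want to avoid quoting that perturbation result, I would proceed through a Yosida-type argument. Fix $f\in L^2(\Omega)$ and solve $u+Lu+Bu=f$ as follows. Let $B_\lambda$ denote the Yosida approximation of $B$. Then $L+B_\lambda$ has full range, since $B_\lambda$ is a Lipschitz monotone perturbation; this can be checked with a contraction argument on $(I+L)^{-1}$ after rescaling, or with a Lipschitz perturbation argument. Let $u_\lambda$ be the resulting solution. Testing the equation against $u_\lambda$ gives a bound on $u_\lambda$ in $L^2$. To pass to the limit $\lambda\searrow0$, the key estimate is an $L^2$ bound on $B_\lambda u_\lambda$, or equivalently on $Lu_\lambda$. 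The natural way to obtain it is to take the $L^2$ scalar product of the equation with $Bu_\lambda$ (or with $B_\lambda u_\lambda$), and to use that $\bigl(Lu,Bu\bigr)_{L^2(\Omega)}\ge0$ for $u\in D(L)$. This last inequality is exactly what Lemma~\ref{lemenereg}, Properties~\ref{lemenereg3}--\ref{lemenereg4}, provides: $\Re\bigl(a\e^{\vi\theta}\int(-\ovl{\Delta u})g_\eps^m(u)\bigr)\ge0$ and $\Re\bigl(b\e^{\vi\theta}\int(-\ovl{\Delta u})h_M^p(u)\bigr)\ge0$. The $\gamma$-part contributes $\Re(\gamma\e^{\vi\theta}\ovl{a\e^{\vi\theta}}\cdots)$-type cross terms. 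To control these, I would rather perturb $L$ by moving $\gamma\e^{\vi\theta}u$ into the Lipschitz part, or simply absorb them using the a priori $L^2$ bound. With the bound on $Bu_\lambda$ in hand, the limit is identified by the standard demiclosedness of maximal monotone operators.

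I expect the main obstacle to be the compatibility estimate $\Re\bigl(e^{\vi\theta}(-\Delta u),\,e^{\vi\theta}Bu\bigr)\ge0$. This is precisely where the conditions $a\in C_\theta(m)$ and $b\in C_\theta(p)$ enter, and Lemma~\ref{lemenereg} is tailored to supply it; it also requires $u\in D(L)$ so that $g_\eps^m(u),h_M^p(u)\in H^1_0$ (Lemma~\ref{lemge}). For the shortest proof I would nonetheless rely on the Lipschitz perturbation theorem. With that theorem, only Lemma~\ref{lemL} and the monotonicity of $B$ (Lemmas~\ref{lemg}, \ref{lemh}) together with Lipschitz continuity are needed.
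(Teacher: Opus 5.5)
Your main argument is the paper's proof: $B_\eps^m$ (resp.\ $B_0^1$) is monotone by Lemmas~\ref{lemg} and~\ref{lemh} and continuous on $L^2(\Omega)$ by Lemma~\ref{lemge}, hence maximal monotone (Brezis, Corollary~2.5), and $L+B$ is then maximal monotone by Lemma~\ref{lemL} and Brezis' perturbation result (Corollary~2.7). The Yosida-type alternative is not needed, and as written it conflates $(Lu,Bu)_{L^2(\Omega)}$ with $(-\Delta u,Bu)_{L^2(\Omega)}$: Lemma~\ref{lemenereg} controls only the latter, since in the former the factors $\e^{\vi\theta}$ cancel and leave terms like $\Re\bigl(a\int_\Omega(-\ovl{\Delta u})g_\eps^m(u)\,\d x\bigr)$, whose sign $a\in C_\theta(m)$ does not determine when $\theta\neq0$; the right test function is $-\Delta u_\lambda$, as the paper uses in Proposition~\ref{propAmax}.
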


\begin{proof*}
Let $u,v\in L^2(\Omega).$ By Lemmas~\ref{lemg} and \ref{lemh}, the operators $(B_\eps^m,L^2(\Omega))$ and $(B_0^1,L^2(\Omega))$ are monotone. In addition, $B_\eps^m,B_0^1\in C\big(L^2(\Omega);L^2(\Omega)\big)$ (Lemma~\ref{lemge}) and we then deduce that $(B_\eps^m,L^2(\Omega))$ and $(B_0^1,L^2(\Omega))$ are maximal monotone (Brezis~\cite[Corollary~2.5, p.33]{MR0348562}). With the help of Lemma~\ref{lemL} and Brezis~\cite[Corollary~2.7, p.36]{MR0348562}, we obtain the desired result.
\medskip
\end{proof*}

\begin{prop}
\label{propAmax}
Under Assumption~$\ref{ass},$ the operator $(A_0^m,D(A_0^m))$ is maximal monotone.
\end{prop}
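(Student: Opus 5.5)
Monotonicity of $(A_0^m,D(A_0^m))$ is Lemma~\ref{lemAmon}, so only the range condition $R(I+A_0^m)=L^2(\Omega)$ remains. Fix $f\in L^2(\Omega)$. We solve the regularized problem and pass to the limit. By Proposition~\ref{propAe}, for every $\eps>0$ and $M>0$ there is a unique $u=u_{\eps,M}\in D(L)$ with
\begin{gather*}
u-\e^{\vi\theta}\Delta u+\gamma\e^{\vi\theta}u+a\e^{\vi\theta}g_\eps^m(u)+b\e^{\vi\theta}h_M^p(u)=f.
\end{gather*}
For $m=1$ we use $\eps=0$, and when $b=0$ no truncation is needed. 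We take $\eps=1/n$, $M=n$ and let $n\to\infty$.

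\textbf{Uniform estimates.} First, take the $L^2$ scalar product with $u$. Since $L$ is accretive and $\Re(a\e^{\vi\theta}g_\eps^m(u)\ovl u)\ge0$ and $\Re(b\e^{\vi\theta}h_M^p(u)\ovl u)\ge0$ pointwise (because $\Re(a\e^{\vi\theta}),\Re(b\e^{\vi\theta})>0$), this gives
\begin{gather*}
\|u\|_{L^2}\le\|f\|_{L^2},\qquad \cos\theta\|\nabla u\|_{L^2}^2\le\|f\|_{L^2}^2,
\end{gather*}
together with uniform bounds on $\Re(a\e^{\vi\theta})\int(|u|^2+\eps)^{-\frac{1-m}2}|u|^2$ and $\Re(b\e^{\vi\theta})\int h_M^p(u)\ovl u$.

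Second, take the scalar product with $-\Delta u$. Here $\Re(\e^{\vi\theta}\|\Delta u\|^2)=\cos\theta\|\Delta u\|^2$ and $\Re(\gamma\e^{\vi\theta})\|\nabla u\|^2\ge0$. By Lemma~\ref{lemenereg}, Properties~\ref{lemenereg3}--\ref{lemenereg4}, the terms $\Re(a\e^{\vi\theta}\int(-\ovl{\Delta u})g_\eps^m(u))$ and $\Re(b\e^{\vi\theta}\int(-\ovl{\Delta u})h_M^p(u))$ are nonnegative. This yields a uniform bound $\|\Delta u\|_{L^2}\le C(\theta)\|f\|_{L^2}$.

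Third, the nonlinear terms must be bounded in $L^2$. From the equation,
\begin{gather*}
\|a g_\eps^m(u)+bh_M^p(u)\|_{L^2}\le C\|f\|_{L^2}.
\end{gather*}
To separate the two terms, multiply by $\ovl{g_\eps^m(u)}$ and by $\ovl{h_M^p(u)}$ and use that $\Re(a\e^{\vi\theta}\ovl{b\e^{\vi\theta}}\,g\,\ovl h)$ has a sign. This works because $g_\eps^m(u)$ and $h_M^p(u)$ are positive multiples of $u$ pointwise, so $g\ovl h\ge0$ and it is enough to know $\Re(a\ovl b)\ge0$. That condition is \emph{not} automatic, so instead I take the product with $\ovl{h_M^p(u)}$ only: the $a$-term gives $\Re(a\e^{\vi\theta}g\ovl h)\ge0$, the Laplacian term is nonnegative by Lemma~\ref{lemenereg}, and the remaining terms are handled by Cauchy--Schwarz. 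This bounds $\|h_M^p(u)\|_{L^2}$, and the bound on $g_\eps^m(u)$ then follows by difference. Consequently $u_n$ is bounded in $L^{2p}$ and, for $m>0$, in $L^{2m}$ as well (via Fatou).

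\textbf{Passage to the limit.} This is the main obstacle, since $\Omega$ may be unbounded and compactness is only local. I would use Cauchy-type estimates instead. Subtract the equations for $u_n$ and $u_k$ and pair with $u_n-u_k$. The monotonicity of $L$ and Lemmas~\ref{lemg} and \ref{lemh} control most terms, but the mismatch $g_{\eps_n}^m-g_{\eps_k}^m$ and $h_{M_n}-h_{M_k}$ evaluated at the same function has to be estimated separately: by $|g_\eps^m(z)-g_0^m(z)|\le C\eps^{m/2}$ (or a similar bound), and for the truncation by $\int_{\{|u|\ge M\}}|u|^{p}|u-v|\le CM^{-\delta}$ using the $L^{2p}$ bound. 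This should show that $(u_n)$ is Cauchy in $L^2(\Omega)$. If that fails, the fallback is to combine weak limits in $H^1_0$ and $L^{2p}$ with local compactness (Rellich on $\Omega\cap B(0,R)$) to get a.e.\ convergence, then identify the limits of $g_\eps^m(u_n)$ and $h_M^p(u_n)$ using the weak-limit property for bounded a.e.-convergent sequences recalled in Section~\ref{notfunana}. In either case $u\in D(L)\cap L^{2m}\cap L^{2p}$, and since $\Delta u_n\rightharpoonup\Delta u$, the limit solves $u+A_0^mu\ni f$.

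\textbf{The case $m=0$.} Here $g_\eps^0(u_n)$ is bounded in $L^\infty$ by $1$, so it has a weak-$\star$ limit $U$. On $\{u\neq0\}$ the a.e.\ convergence gives $U=u/|u|$, and $\|U\|_{L^\infty}\le1$; hence $U\in B_0^0u$ is a saturated section, $U\in L^2$ because the equation expresses $U$ through $L^2$ terms, and $f\in(I+A_0^0)u$.
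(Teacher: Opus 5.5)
Your overall strategy is the paper's proof: regularize with $g_\eps^m$ and $h_M^p$ via Proposition~\ref{propAe}, pair with $u$, with $-\Delta u$ and with one nonlinear term, then pass to the limit and identify a saturated section when $m=0$. Your fallback limit argument (bounds in $H^1_0$, Rellich on $\Omega\cap B(0,R)$ with a diagonal extraction for a.e.\ convergence, then weak $L^2$ limits of bounded a.e.\ convergent sequences) is exactly the one the paper uses. The first two estimates and the $m=0$ step are fine. Two points need correcting.

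\textbf{The third estimate.} Pairing with $\ovl{h_M^p(u)}$ produces the Laplacian term $\Re(\e^{\vi\theta}J)$, where $J=\int_\Omega(-\Delta u)\ovl{h_M^p(u)}\,\d x$. Lemma~\ref{lemenereg} does not give this term a sign. It only gives $(p-1)\Re J\ge2\sqrt p\,|\Im J|$ and $\Re(b\e^{\vi\theta}\ovl J)\ge0$. From these, $\Re(\e^{\vi\theta}J)\ge0$ follows only if $2\sqrt p\cos\theta\ge(p-1)|\sin\theta|$, i.e.\ $1\in C_\theta(p)$, which Assumption~\ref{ass} does not impose; for instance it fails for $p=3$ and $\theta\in(\pi/3,\pi/2)$.

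The fix is immediate, and it is what the paper does (pairing with $g_\eps^m(u)$ rather than $h_M^p(u)$). Bound the Laplacian term by $\|\Delta u\|_{L^2}\|h_M^p(u)\|_{L^2}$ using your second estimate. Since $u\,\ovl{h_M^p(u)}$ and $g_\eps^m(u)\ovl{h_M^p(u)}$ are nonnegative real functions, you obtain $\Re(b\e^{\vi\theta})\|h_M^p(u)\|_{L^2}\le\|\Delta u\|_{L^2}+\|f\|_{L^2}$.

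Note also that these bounds control $h_{M_n}^p(u_n)$ and $g_{\eps_n}^m(u_n)$ in $L^2$, not $u_n$ in $L^{2p}$ or $L^{2m}$. Only the limit $u$ lands in $L^{2p}\cap L^{2m}$, by Fatou. So the weak limits you should take are those of $\Delta u_n$, $h_{M_n}^p(u_n)$ and $g_{\eps_n}^m(u_n)$ in $L^2$, not of $u_n$ in $L^{2p}$.

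\textbf{The Cauchy route.} Your primary ``Cauchy in $L^2$'' argument does not close on an unbounded $\Omega$ with the bounds at hand, so the fallback should be the proof.
\begin{itemize}
\item The pointwise estimate $|g_\eps^m(z)-g_0^m(z)|\le C\eps^{m/2}$, integrated against $|u_n-u_k|$, needs an $L^1$ bound on $u_n-u_k$, which you do not have.
\item The truncation mismatch, supported on $\{|u_k|\ge M_k\}$, requires $|u_k|^p\in L^2$ there. The $L^2$ bound on $h_{M_k}^p(u_k)$ only gives $M_k^{2p-2}\int_{\{|u_k|\ge M_k\}}|u_k|^2\le C$, and the $L^{2p}$ bound you invoke does not exist.
\item For $m=0$, $\sup_z|g_{\eps}^0(z)-g_{\eps'}^0(z)|$ does not even tend to $0$.
\end{itemize}

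Incidentally, your explicit treatment of $m=1$ (with $\eps=0$ and $M=n\to\infty$) is more careful than the paper's reduction ``by Proposition~\ref{propAe} we may assume $m<1$''. That proposition concerns the operator with the truncated nonlinearity $h_M^p$, not the untruncated $A_0^1$.
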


\begin{proof*}
By Proposition~\ref{propAe}, we may assume that $m\in[0,1).$ In addition, by Lemma~\ref{lemAmon}, we only have to show that $R(I+A_0^m)=L^2(\Omega).$ Let $F\in L^2(\Omega).$ We proceed with the proof in four steps. \\
\textbf{Step 1:} Let $\eps>0.$ There exists $u_\eps\in D(A_\eps^m)$ satisfying,
\begin{gather}
\label{lemAmax1}
-\e^{\vi\theta}\Delta u_\eps+a\e^{\vi\theta}g_\eps^m(u_\eps)+b\e^{\vi\theta}h_{\eps^{-1}}^p(u_\eps)+\gamma\e^{\vi\theta}u_\eps+u_\eps=F, \text{ in } L^2(\Omega).
\end{gather}
Since $(A_\eps^m,D(A_\eps^m))$ is maximal monotone (Proposition~\ref{propAe}), we have $R(I+A_\eps^m)=L^2(\Omega).$
\\
\textbf{Step 2:} The family $(u_\eps)_{\eps>0}$ is bounded in $H^1_0(\Omega),$ and the families $(\Delta u_\eps)_{\eps>0},$ $(g_\eps^m(u_\eps))_{\eps>0}$ and $(h_{\eps^{-1}}(u_\eps))_{\eps>0}$ are bounded in $L^2(\Omega).$ In addition, there exist a $u\in D(A_0^m)$ and a decreasing sequence $(\eps_n)_{n\in\N}\subset(0,1)$ converging toward $0$ such that,
\begin{align}
\label{lemAmax2}
&	u_{\eps_n}\xrightarrow[n\to\infty]{\Dr^\p(\Omega)}u,								\\
\label{lemAmax3}
&	u_{\eps_n}\xrightarrow[n\to\infty]{\text{a.e.\,in }\Omega}u,							\\
\label{lemAmax4}
&	g_{\eps_n}^m(u_{\eps_n})\xrightarrow[n\to\infty]{\Dr^\p(\Omega)}g_0^m(u), \; m>0,		\\
\label{lemAmax5}
&	h_{\eps_n^{-1}}^p(u_{\eps_n})\xrightarrow[n\to\infty]{\Dr^\p(\Omega)}g_0^p(u).
\end{align}
Let $\eps>0$ and $\Omega_\eps=\left\{|u_\eps|\le\eps^{-1}\right\}.$ We take the $L^2$-scalar product of~\eqref{lemAmax1} with $u_\eps.$ We get,
\begin{gather}
\begin{split}
\label{lemAmax6}
\cos\theta\|\nabla u_\eps\|_{L^2(\Omega)}^2+\Re(a\e^{\vi\theta})\vint_\Omega\frac{|u_\eps|^2}{(|u_\eps|^2+\eps)^\frac{1-m}2}\d x
				&	+\Re(b\e^{\vi\theta})\vint_\Omega h_{\eps^{-1}}^p(u_\eps)\ovl{u_\eps}\d x									\\
			+	&	\left(\Re(\gamma\e^{\vi\theta})+1\right)\|u_\eps\|_{L^2(\Omega)}^2=\Re\vint_\Omega F\,\ovl{u_\eps}\d x,
\end{split}
\end{gather}
Applying Cauchy-Schwarz' and Young's inequalities to~\eqref{lemAmax6}, we obtain that
\begin{gather}
\label{lemAmax7}
\begin{split}
\cos\theta\|u_\eps\|_{H^1_0(\Omega)}^2&+\Re(a\e^{\vi\theta})\vint_\Omega\frac{|u_\eps|^2}{(|u_\eps|^2+\eps)^\frac{1-m}2}\d x		\\
&+\Re(b\e^{\vi\theta})\left(\|u_\eps\|_{L^{p+1}(\Omega_\eps)}^{p+1}+\eps^{-(p-1)}\|u_\eps\|_{L^2(\Omega_\eps^\co)}^2\right)\le\|F\|_{L^2(\Omega)}^2.
\end{split}
\end{gather}
Repeating the process with $-\Delta u_\eps,$ we get with the help of Lemma \ref{lemenereg} that
\begin{gather}
\label{lemAmax8}
\cos\theta\|\Delta u_\eps\|_{L^2(\Omega)}^2\le\|F\|_{L^2(\Omega)}^2.
\end{gather}
Taking the $L^2$-scalar product of~\eqref{lemAmax1} with $g_\eps^m(u_\eps),$ we obtain,
\begin{gather}
\begin{split}
\label{lemAmax9}
\Re\left(\e^{\vi\theta}\vint_\Omega(-\Delta u_\eps)\ovl{g_\eps^m(u_\eps)}\d x\right)+&\;\Re(a\e^{\vi\theta})\|g_\eps^m(u_\eps)\|_{L^2(\Omega)}^2
			+\Re\left(b\e^{\vi\theta}\vint_\Omega h_{\eps^{-1}}^p(u_\eps)\ovl{g_\eps^m(u_\eps)}\d x\right)							\\
+\big(&\Re(\gamma\e^{\vi\theta})+1\big)\vint_\Omega\frac{|u_\eps|^2}{(|u_\eps|^2+\eps)^\frac{1-m}2}\d x
			=\Re\left(\,\vint_\Omega F\,\ovl{g_\eps^m(u_\eps)}\d x\right).
\end{split}
\end{gather}
Since $\Re(b\e^{\vi\theta})\ge0$ and
\begin{gather*}
\vint_\Omega h_{\eps^{-1}}^p(u_\eps)\ovl{g_\eps^m(u_\eps)}\d x
=\vint_{\Omega_\eps}\frac{|u_\eps|^{p+1}}{(|u_\eps|^2+\eps)^\frac{1-m}2}\d x
+\eps^{-(p-1)}\vint_{\Omega_\eps^\co}\frac{|u_\eps|^2}{(|u_\eps|^2+\eps)^\frac{1-m}2}\d x\ge0,
\end{gather*}
we get from \eqref{lemAmax8}, \eqref{lemAmax9} and the Cauchy-Schwarz and Young inequalities that
\begin{gather}
\label{lemAmax10}
\sup_{\eps>0}\|g_\eps^m(u_\eps)\|_{L^2(\Omega)}<\infty.
\end{gather}
It then follows from the equation \eqref{lemAmax1}, \eqref{lemAmax7}, \eqref{lemAmax8} and \eqref{lemAmax10} that,
\begin{gather}
\label{lemAmax11}
\sup_{\eps>0}\|h_{\eps^{-1}}^p(u_\eps)\|_{L^2(\Omega)}<\infty.
\end{gather}
By \eqref{lemAmax7} and \eqref{lemAmax8}, there exist $u\in D(L)$ and a decreasing sequence $\eps_n\searrow0$ satisfying \eqref{lemAmax2}--\eqref{lemAmax3}. By \eqref{lemAmax3}, we have that $h_{\eps_n^{-1}}^p(u_{\eps_n})\xrightarrow[n\to\infty]{\text{a.e.\,in }\Omega}g_0^p(u),$ which gives with \eqref{lemAmax11} that $u\in L^{2p}(\Omega)$ and \eqref{lemAmax5}. Note that if $m=0,$ we have shown that $u\in D(L)\cap L^{2p}(\Omega)=D(A_0^0)$ and Step~2 is ended. Assume $m>0.$ By \eqref{lemAmax3}, we have that
\begin{gather}
\label{lemAmax12}
g_{\eps_n}^m(u_{\eps_n})\xrightarrow[n\to\infty]{\text{a.e.\,in }\Omega}g_0^m(u).
\end{gather}
By \eqref{lemAmax10} and \eqref{lemAmax12}, we obtain that $u\in L^{2m}(\Omega)$ and \eqref{lemAmax4}, so that $u\in D(A_0^m).$
\\
\textbf{Step 3:} If $m=0$ then there exists $U\in B_0^0u$ such that, renumbering the sequence if necessary, $g_{\eps_n}^0(u_{\eps_n})\xrightarrow[n\to\infty]{\Dr^\p(\Omega)}U.$
\\
We have for any $n\in\N,$ $\|g_{\eps_n}^0(u_{\eps_n})\|_{L^\infty(\Omega)}\le1.$ Then, renumbering the sequence if necessary, there exists $U\in L^\infty(\Omega)$ such that $\|U\|_{L^\infty(\Omega)}\le1$ and $g_{\eps_n}^0(u_{\eps_n})-\!\!\!\!\weak U,$ as $n\to\infty,$ in $L^\infty(\Omega)_{\w\star}.$ Hence, $g_{\eps_n}^0(u_{\eps_n})\tends U,$ as $n\to\infty,$ in $\Dr^\p(\Omega).$ By \eqref{lemAmax3}, $g_{\eps_n}^0(u_{\eps_n})\tends g_0^0(u),$ as $n\to\infty,$ a.e.\,where $u\neq0.$ It follows from \cite[Lemma~6.1]{MR4848642} that $U=g_0^0(u),$ a.e.\,where $u\neq0.$ Finally by Step~2, $(g_\eps^0(u_\eps))_{\eps>0}$ is bounded in $L^2(\Omega).$ It follows that $U\in L^2(\Omega)$ and $g_{\eps_n}^0(u_{\eps_n})\underset{n\to\infty}{-\!\!\!-\!\!\!-\!\!\!-\!\!\!\weak}U,$ in $L^2(\Omega)_\w.$ Hence, $U\in B_0^0u.$
\\
\textbf{Step~4:} Conclusion.
\\
By~\eqref{lemAmax1} and Steps~2 and 3, if $m=0$ then $U\in B_0^0u$ and $-\e^{\vi\theta}\Delta u+a\e^{\vi\theta}U+b\e^{\vi\theta}|u|^{p-1}u+\gamma\e^{\vi\theta}u+u=F,$ in $\Dr^\p(\Omega),$ so in $L^2(\Omega),$ since $(u,U)\in D(A_0^0)\times B_0^0u.$ In other words, $u\in D(A_0^0)$ and $(I+A_0^0)u\ni F.$ In the same way, if $m>0$ then by Step~2, $u\in D(A_0^m)$ and $(I+A_0^m)u=F,$ in $L^2(\Omega).$
\medskip
\end{proof*}

\section{Proofs of the theorems of existence and uniqueness}
\label{proofexi}

\begin{vproof}{of Proposition~\ref{propsolL2}.}
Let $(u,f)$ be a weak solution and let $(u_n,f_n)_{n\in\N}$ be a sequence of strong solutions to \eqref{gl} satisfying \eqref{fn}--\eqref{cv}. By Remark~\ref{rmkp}, each $u_n$ satisfies~\eqref{rmkp1} with $H=L^2(\Omega).$ Set $Y=H^2_0(\Omega)\cap L^\frac2{2-m}(\Omega).$ Then $Y^\star=H^{-2}(\Omega)+L^\frac2m(\Omega),$ and by \cite[Lemma~6.2]{MR4053613}, we have for any $T>0,$
\begin{gather}
\label{dempropsolL2}
\Delta u_n\xrightarrow[n\to\infty]{C([0,T];H^{-2}(\Omega))}\Delta u \; \text{ and } \; g_0^m(u_n)\xrightarrow[n\to\infty]{C([0,T];L^\frac2m(\Omega))}g_0^m(u).
\end{gather}
It follows from the equation satisfied by each $u_n,$ \eqref{cv} and \eqref{dempropsolL2} that for any $T>0,$ $(u_n)_{n\in\N}$ is a Cauchy sequence in $W^{1,1}\big((0,T);Y^\star\big),$ so that \eqref{propsolL2} hold true. We use \eqref{cv}, \eqref{propsolL22} and \eqref{dempropsolL2} to pass to the limit in the equation satisfied by each $u_n.$ Then, $u$ satisfies \eqref{gl} in $L^1_\loc\big([0,\infty);Y^\star\big).$
\medskip
\end{vproof}

\begin{vproof}{of Proposition~\ref{propdep}.}
The embedding in \eqref{propdep1} comes from \cite[Lemma~A.4]{MR4053613} $(1<q<\infty)$ and \cite[Theorem~5.3]{MR4725781} $(q=1$ or $q=\infty).$ We make the difference between the two equations satisfied by $u$ and $\wt u,$ respectively. It follows that $u-\wt u$ satisfies the equation obtained in $L^1_\loc\big((0,\infty);X^\star\big).$ We take the $X^\star-X$ duality product with $\e^{-\vi\theta}(u-\wt u).$ By \cite[Lemma~A.5]{MR4053613}, \cite[Theorem~5.3, Lemma~6.1]{MR4725781}, Lemma~\ref{lemg} and Cauchy-Schwarz' inequality, we get,
\begin{gather*}
\frac12\frac\d{\d t}\|u-\wt u\|_{L^2(\Omega)}^2\le\|f-\wt f\|_{L^2(\Omega)}\|u-\wt u\|_{L^2(\Omega)},
\end{gather*}
almost everywhere on $(0,\infty).$ Integrating over $(s,t),$ we obtain \eqref{propdep2}. Finally, we note that the strong solutions satisfy \eqref{propdep1} with $q=m+1$ or $q=p+1,$ and that \eqref{propdep2} is stable by passing to the limit in $C\big([0,T];L^2(\Omega)\big)\times L^1\big((0,T);L^2(\Omega)),$ for any $T>0.$ By using \eqref{cv}, we then deduce that \eqref{propdep2} still holds true for the weak solutions.
\medskip
\end{vproof}

\begin{vproof}{of Theorem~\ref{thmstrongH2}.}
Let the assumptions of the theorem be fulfilled. With the help of Proposition~\ref{propAmax}, we may apply Barbu~\cite[Theorem~4.5, p.141]{MR2582280} (and also Vrabie~\cite[Theorem~1.7.1, p.23]{MR1375237}). It follows that there exists a unique $u\in W^{1,\infty}_\loc\big([0,\infty);L^2(\Omega)\big)$ satisfying $u(t)\in D(A_0^m)$ and \eqref{gl} in $L^2(\Omega),$ for almost every $t>0,$ $u(0)=u_0$ and \eqref{strongH23}. This last estimate yields \eqref{strongH21}. Let us take the $L^2$-scalar product of \eqref{gl} with $\e^{-\vi\theta}u.$ Since $u\in W^{1,\infty}_\loc\big([0,\infty);L^2(\Omega)\big),$ we may apply \cite[Lemma~A.5]{MR4053613} to obtain \eqref{L2}, for a.e.\,$t>0.$ By \eqref{L2}, the Cauchy-Schwarz inequality and integration, we get that
\begin{gather}
\label{demH21}
\forall t\ge0, \; \|u(t)\|_{L^2(\Omega)}\le\|u_0\|_{L^2(\Omega)}+\vint_0^t\|f(s)\|_{L^2(\Omega)}\d s,		\\
\begin{split}
\label{demH22}
\cos\theta\|\nabla u(t)\|_{L^2(\Omega)}^2+\Re(a\e^{\vi\theta})\|u(t)\|_{L^{m+1}(\Omega)}^{m+1}+&\Re(b\e^{\vi\theta})\|u(t)\|_{L^{p+1}(\Omega)}^{p+1}	\\
\le&\left(\|u_t(t)\|_{L^2(\Omega)}+\|f(t)\|_{L^2(\Omega)}\right)\|u(t)\|_{L^2(\Omega)},
\end{split}
\end{gather}
for a.e.\,$t>0.$ We have by \eqref{demH21}--\eqref{demH22} that, $u\in L^\infty_\loc\big([0,\infty);H^1_0(\Omega)\cap L^{m+1}(\Omega)\cap L^{p+1}(\Omega)\big)$ and that $u$ is an $H^2$-solution. In particular, $u\in C_\w\big([0,\infty);H^1_0(\Omega)\big).$ Now, we take the $L^2$-scalar product of \eqref{gl} with $-\e^{-\vi\theta}\Delta u.$ Using Lemma~\ref{lemenereg} and Cauchy-Schwarz' inequality, we get that
\begin{gather}
\label{demH23}
\cos\theta\|\Delta u\|_{L^2(\Omega)}\le\left(\|u_t\|_{L^2(\Omega)}+\|f\|_{L^2(\Omega)}\right),
\end{gather}
a.e.\,on $(0,\infty).$ Therefore, there exists $C>0$ such that for a.e.\,$t>0,$ $\|\Delta u(t)\|_{L^2(\Omega)}\le C,$ and since $\Delta u\in C_\w\big([0,\infty);H^{-1}(\Omega)\big),$ we infer that $\Delta u\in C_\w\big([0,\infty);L^2(\Omega)\big).$ Taking the $L^2$-scalar product of~\eqref{gl} with $\e^{-\vi\theta}g_0^p(u),$ we obtain,
\begin{gather*}
\begin{split}
&\Re\left(\,\vint_\Omega u_t\ovl{g_0^p(u)}\d x\right)+\Re\left(\e^{\vi\theta}\vint_\Omega(-\Delta u)\ovl{g_0^p(u)}\d x\right)+\Re\left(a\e^{\vi\theta}
		\vint_\Omega g_0^m(u)\ovl{g_0^p(u)}\d x\right)																	\\
+\;&\Re(b\e^{\vi\theta})\|u\|_{L^{2p}(\Omega)}^{2p}+\left(\Re(\gamma\e^{\vi\theta})+1\right)\vint_\Omega|u|^{p+1}d x
		=\Re\left(\e^{\vi\theta}\vint_\Omega f(t)\ovl{g_0^p(u)}\d x\right).
\end{split}
\end{gather*}
But $\Re(a\e^{\vi\theta})\ge0$ and $\int_\Omega g_0^m(u)\ovl{g_0^p(u)}\d x=\int_\Omega|u|^{m+p}\d x.$ We then deduce from the above estimates and the Cauchy-Schwarz and Young inequalities that
\begin{gather}
\label{demH231}
\|u(t)\|_{L^{2p}(\Omega)}^{2p}\le C\left(\|u_t(t)\|_{L^2(\Omega)}+\|f(t)\|_{L^2(\Omega)}\right).
\end{gather}
for a.e.\,$t>0.$ In particular, $u\in L^\infty_\loc\big([0,\infty);L^{2p}(\Omega)\big)$ and we infer with the help of \eqref{gl} that if $m>0$ then $u\in L^\infty_\loc\big([0,\infty);L^{2m}(\Omega)\big),$ and if $m=0$ then the saturated section $U$ asssociated to $u$ satisfies \eqref{thmstrongaH13}. Since we have for any $t\ge0,$
\begin{gather}
\label{demH24}
\|\nabla u(t)\|_{L^2(\Omega)}^2\le\|u(t)\|_{L^2(\Omega)}\|\Delta u(t)\|_{L^2(\Omega)},
\end{gather}
we get \eqref{strongH22} with the help of \eqref{strongH21} and \eqref{demH23}. In particular, $u\in C\big([0,\infty);H^1_0(\Omega)\big).$ Taking the $L^2$-scalar product of \eqref{gl} with $-\e^{-\vi\theta}\Delta u,$ it follows from~\cite[Lemma~A.5]{MR4053613}, Lemma~\ref{lemenereg} and the Cauchy-Schwarz and Young inequalities that for almost every $t>0,$
\begin{align*}
	&	\; \frac12\frac{\d}{\d t}\|\nabla u(t)\|_{L^2(\Omega)}^2+\cos\theta\|\Delta u(t)\|_{L^2(\Omega)}^2			\\
   \le	&	\; \|f(t)\|_{L^2(\Omega)}\|\Delta u(t)\|_{L^2(\Omega)}\le\frac1{2\cos\theta}\|f(t)\|_{L^2(\Omega)}^2
											+\frac{\cos\theta}2\|\Delta u(t)\|_{L^2(\Omega)}^2,
\end{align*}
from which we get Property~\ref{thmstrongH24}. It remains to prove Property~\ref{thmstrongH25}. This comes from \eqref{gl}, \eqref{strongH23}, \eqref{demH21} and \eqref{demH23}--\eqref{demH24}. This ends the proof of the theorem.
\medskip
\end{vproof}

\begin{lem}
\label{lemint}
Let Assumption~$\ref{ass}$ be fulfilled, let $(u,f)$ and $(u_n,f_n)_{n\in\N}$ satisfy \eqref{cv}, where $(u_n,f_n)_{n\in\N}$ $((u_n,U_n,f_n)_{n\in\N},$ if $m=0)$ is any sequence of $H^2$-strong solutions to~\eqref{gl}--\eqref{glb}. Then $u$ is a weak solution to \eqref{gl}--\eqref{glb} and satisfies \eqref{H10}, \eqref{Lm} as well as
\begin{align}
\label{lemint1}
&	u_n\underset{n\to\infty}{-\!\!\!-\!\!\!-\!\!\!-\!\!\!\weak}u, \text{ in } L^2\big((0,T);H^1_0(\Omega)\big)_\w,				\\
\label{lemint2}
&	\Delta u_n\underset{n\to\infty}{-\!\!\!-\!\!\!-\!\!\!-\!\!\!\weak}\Delta u, \text{ in } L^2\big((0,T);H^{-1}(\Omega)\big)_\w,	\\
\label{lemint3}
&	g_0^q(u_n)\underset{n\to\infty}{-\!\!\!-\!\!\!-\!\!\!-\!\!\!\weak}g_0^q(u), \text{ in } L^\frac{q+1}q\big((0,T);L^\frac{q+1}q(\Omega)\big)_\w, \; q>0,	\\
\label{lemint4}
&	U_n\underset{n\tends\infty}{\overset{L^\infty((0,T)\times\Omega)_{\w\star}}{-\!\!\!-\!\!\!-\!\!\!-\!\!\!-\!\!\!-\!\!\!-\!\!\!-\!\!\!-\!\!\!-\!\!\!-\!\!\!\weak}}U,
	\text{ if } m=0,	\\
\label{lemint5}
&	\; \frac{\partial u_n}{\partial t}\xrightarrow[n\tends\infty]{}\frac{\partial u}{\partial t}, \text{ in } \Dr^\p\big((0,T)\times\Omega\big),
\end{align}
for any $T>0$ and $q\in\{m,p\},$ where $U$ is the saturated section associated to $u$ when $m=0.$ Finally, $u$ satisfies \eqref{L2+}, for $s=0$ and any $t\ge0.$
\end{lem}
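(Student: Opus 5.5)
The plan is to derive uniform a priori bounds from the energy identity \eqref{L2} for the strong solutions, extract weak limits, identify them using the strong convergence \eqref{cv}, and finally pass to the limit in the integrated identity by lower semicontinuity.

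\textbf{Step 1: uniform bounds.} By Theorem~\ref{thmstrongH2}, Property~\ref{thmstrongH23}, each $u_n$ satisfies \eqref{L2} with $f_n$. Integrating over $(0,t)$ gives \eqref{L2+} for $(u_n,f_n)$ with $s=0$. The right-hand side is bounded by $\frac12\|u_n(0)\|_{L^2}^2+\|f_n\|_{L^1((0,T);L^2)}\|u_n\|_{C([0,T];L^2)}$, which is bounded uniformly in $n$ by \eqref{cv}.

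\begin{itemize}
\item Since $\cos\theta>0$ and $\Re(a\e^{\vi\theta})>0$ by \eqref{t} and \eqref{a}, we get bounds on $(u_n)$ in $L^2((0,T);H^1_0(\Omega))$ and in $L^{m+1}((0,T);L^{m+1}(\Omega))$.
\item If $b\neq0$, then $\Re(b\e^{\vi\theta})>0$, giving a bound in $L^{p+1}((0,T);L^{p+1}(\Omega))$. The $\gamma$ term is nonnegative by \eqref{g}.
\item Consequently $(g_0^q(u_n))$ is bounded in $L^{\frac{q+1}q}((0,T)\times\Omega)$ for $q\in\{m,p\}$, $q>0$. When $m=0$, we have $\|U_n\|_{L^\infty}\le1$.
\end{itemize}

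\textbf{Step 2: extraction and identification.} Extract subsequences converging weakly (weak-$\star$ for $U_n$). The $C([0,T];L^2)$ convergence of $u_n$ to $u$ identifies the weak limit in $L^2(H^1_0)$ as $u$, which gives \eqref{lemint1}, \eqref{H10}, \eqref{lemint2} (since $\Delta$ is continuous $H^1_0\to H^{-1}$), and \eqref{lemint5} in $\Dr^\p$.

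Since $u_n\to u$ in $L^2((0,T)\times\Omega)$, a further subsequence converges a.e. in $(0,T)\times\Omega$. Then $g_0^q(u_n)\to g_0^q(u)$ a.e., and boundedness in the reflexive space $L^{\frac{q+1}q}$ yields \eqref{lemint3} by the fact recalled in Section~\ref{notfunana}. Fatou's lemma gives \eqref{Lm}.

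For $m=0$, the weak-$\star$ limit $U$ satisfies $\|U\|_\infty\le1$. On $\{u\neq0\}$ we have $U_n=u_n/|u_n|\to u/|u|$ a.e. (eventually $u_n\ne0$ there). Arguing as in Step~3 of Proposition~\ref{propAmax}, via \cite[Lemma~6.1]{MR4848642} localized on $\{u\neq0\}$, we get $U=u/|u|$ there, so $U$ is a saturated section and \eqref{lemint4} holds.

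Since every subsequence has a further subsequence with the same limit, the whole sequence converges. Passing to the limit term by term in \eqref{gl} in $\Dr^\p((0,T)\times\Omega)$ shows that $u$ satisfies \eqref{gl} (resp. \eqref{gl0}), and $u(0)=u_0$ follows from \eqref{cv}. Hence $u$ is a weak solution; note that $b|u|^{p-1}u\in L^{\frac{p+1}p}$, which settles Remark~\ref{rmkdefsol}, item~\ref{rmkdefsol4}.

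\textbf{Step 3: energy inequality.} To obtain \eqref{L2+} with $s=0$, pass to the limit in the integrated identity for $u_n$.
\begin{itemize}
\item The terms $\frac12\|u_n(t)\|^2$ and $\frac12\|u_n(0)\|^2$ converge by \eqref{cv}.
\item The forcing term converges, since $f_n\overline{u_n}\to f\overline u$ in $L^1((0,t)\times\Omega)$ by \eqref{cv}.
\item The dissipative terms have nonnegative coefficients and are convex norms. Weak lower semicontinuity of $\int\|\nabla u_n\|^2$ (from \eqref{lemint1}), together with Fatou's lemma for $\int\|u_n\|_{L^{m+1}}^{m+1}$, $\int\|u_n\|_{L^{p+1}}^{p+1}$ and $\int\|u_n\|_{L^2}^2$ (all via a.e. convergence), gives the inequality for every $t\ge0$.
\end{itemize}

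\textbf{Main obstacle.} The delicate point is the case $m=0$: identifying the weak-$\star$ limit $U$ as a saturated section, since a.e. convergence of $U_n$ only holds on $\{u\ne0\}$. I would handle it exactly as in Step~3 of Proposition~\ref{propAmax}, now on the space-time domain.
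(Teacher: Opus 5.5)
Your route is the paper's: the integrated energy identity for $u_n$, an a.e.\ convergent subsequence, reflexivity plus a.e.\ limits for $g_0^q(u_n)$, Fatou and weak lower semicontinuity, and \cite[Lemma~6.1]{MR4848642} when $m=0$. Steps~1 and~3 are fine.

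There is, however, a genuine gap for $m=0$ in the sentence ``every subsequence has a further subsequence with the same limit''. For $u_n$, $\Delta u_n$, $\partial_tu_n$ and $g_0^q(u_n)$ this is correct, because the candidate limits are determined by $u$ alone. For $U_n$ it is not. All you know about a weak-$\star$ cluster point is that it is \emph{a} saturated section associated to $u$. By Definition~\ref{defsatsec}, this fixes it only on $\{u\neq0\}$; on $\{u=0\}$ any measurable function of modulus at most $1$ is admissible. The set $\{u=0\}$ may have positive measure (this is exactly the finite-time extinction regime studied for this model). So two subsequences of $(U_n)$ could a priori converge to two different saturated sections. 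Hence neither \eqref{lemint4} for the whole sequence nor the uniqueness implicit in ``the saturated section'' follows from your argument. The difficulty you flag as the main obstacle, namely that a cluster point is a saturated section, is actually the easy part. The real issue is showing that all cluster points coincide.

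The missing idea is to use the equation, which you only invoke afterwards. Along one subsequence $(n_k)$, passing to the limit in \eqref{gl0} gives $aU=f-\e^{-\vi\theta}\partial_tu+\Delta u-b|u|^{p-1}u-\gamma u$ in $\Dr^\p\big((0,\infty)\times\Omega\big)$. Then for every $n$,
\[ a(U_n-U)=\e^{-\vi\theta}(\partial_tu-\partial_tu_n)-(\Delta u-\Delta u_n)+b\big(g_0^p(u)-g_0^p(u_n)\big)+\gamma(u-u_n)+(f_n-f). \]
Every term on the right tends to $0$ in $\Dr^\p$ along the \emph{whole} sequence, by \eqref{cv}, \eqref{lemint2}, \eqref{lemint3} and \eqref{lemint5}. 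Since $a=\mu\e^{-\vi\theta}\neq0$, it follows that $U_n\to U$ in $\Dr^\p$. Combine this with $\|U_n\|_{L^\infty}\le1$ and the density of $\Dr\big((0,T)\times\Omega\big)$ in $L^1\big((0,T)\times\Omega\big)$: every weak-$\star$ cluster point equals $U$. This yields \eqref{lemint4} for the whole sequence and the uniqueness of $U$ (cf.\ Remark~\ref{rmkdefsol}, item~\ref{rmkdefsol3}). This is exactly how the paper closes its proof.
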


\begin{proof*}
Let $(u,f)$ and $(u_n,f_n)_{n\in\N}$ be as in the statement of the lemma. Let $T>0$ and $q\in\{m,p\}.$ By \eqref{cv} and the diagonal procedure, there exists a subsequence $(u_{n_k})_{k\in\N}\subset(u_n)_{n\in\N}$ satisfying
\begin{gather}
\label{demlemint1}
u_{n_k}\xrightarrow[k\to\infty]{\text{a.e.\,in }(0,\infty)\times\Omega}u,
\end{gather}
from which we get
\begin{gather}
\label{demlemint2}
g_0^q(u_{n_k})\xrightarrow[k\to\infty]{\text{a.e.\,in }(0,\infty)\times\Omega}g_0^q(u), \; q>0,			\\
\label{demlemint3}
U_{n_k}=\frac{u_{n_k}}{|u_{n_k}|}\xrightarrow[k\to\infty]{\text{a.e.\,in }\omega}\frac{u}{|u|}, \text{ if } m=0,
\end{gather}
where $\omega=\big\{(t,x)\in(0,\infty)\times\Omega;u(t,x)\neq0\big\}.$ By \eqref{L2}, we have for any $n\in\N,$
\begin{multline}
\label{L2n+}
\dfrac12\|u_n(T)\|_{L^2}^2+\cos\theta\vint_0^T\|\nabla u_n(t)\|_{L^2}^2\d t
+\Re(a\e^{\vi\theta})\vint_0^T\|u_n(t)\|_{L^{m+1}}^{m+1}\d t+\Re(b\e^{\vi\theta})\vint_0^T\|u_n(t)\|_{L^{p+1}}^{p+1}\d t	\\
+\Re(\gamma\e^{\vi\theta})\vint_0^T\|u_n(t)\|_{L^2}^2\d t
=\dfrac12\|u_n(0)\|_{L^2}^2+\Re\left(\e^{\vi\theta}\iint\limits_{0\;\Omega}^{\text{}\;\;T}f_n(t,x)\,\ovl{u_n(t,x)}\,\d x\,\d t\right).
\end{multline}
We infer with the help of \eqref{cv} and Cauchy-Schwarz' inequality that
\begin{gather}
\label{demlemin4}
(u_n)_{n\in\N} \text{ is bounded in } L^2\big((0,T);H^1_0(\Omega)\big),		\\
\label{demlemin5}
(u_n)_{n\in\N} \text{ is bounded in } L^{q+1}\big((0,T);L^{q+1}(\Omega)\big),			\\
\label{demlemin6}
(g_0^q(u_n))_{n\in\N} \text{ is bounded in } L^\frac{q+1}q\big((0,T);L^\frac{q+1}q(\Omega)\big), \; q>0.
\end{gather}
We claim that $u$ satisfies \eqref{H10}, \eqref{Lm}, \eqref{lemint1}, \eqref{lemint2}, \eqref{lemint3} and
\begin{align}
\label{demlemin7}
&	\nabla u_n\underset{n\to\infty}{-\!\!\!-\!\!\!-\!\!\!-\!\!\!\weak}\nabla u, \text{ in } L^2\big((0,T);L^2(\Omega)\big)_\w^N,	\\
\label{demlemin8}
&	\int_0^T\|u(t)\|_{L^{q+1}(\Omega)}^{q+1}\d t\le\liminf_{n\to\infty}\int_0^T\|u_n(t)\|_{L^{q+1}(\Omega)}^{q+1}\d t,
\end{align}
for the whole sequence. Indeed, first of all, since for any $r\in[1,\infty),$
\begin{gather}
\label{demlemin9}
L^r\big((0,T);L^r(\Omega)\big)\cong L^r((0,T)\times\Omega),
\end{gather}
we have by \eqref{demlemint1}, \eqref{demlemin5}, \eqref{demlemin9} and Fatou's Lemma that $u$ satisfies \eqref{Lm} and \eqref{demlemin8}. Next, we notice that 
\begin{gather}
\label{demlemin10}
C\big([0,T];L^2(\Omega)\big)\inj \Dr^\p\big((0,T)\times\Omega\big),
\end{gather}
and that the spaces involved in \eqref{lemint1} and \eqref{lemint3} are reflexive and also continuously embedded in $\Dr^\p\big((0,T)\times\Omega\big).$ We deduce from \eqref{cv}, \eqref{demlemin4} and \eqref{demlemin6} that \eqref{H10} holds true and, for a subsequence, that $u$ satisfies \eqref{lemint1} and \eqref{lemint3}. But if \eqref{lemint1} does not hold for the whole sequence, by density of $L^2\big((0,T);L^2(\Omega)\big)$ in $L^2\big((0,T);H^{-1}(\Omega)\big),$ we obtain, for a subsequence, that $u_n\cancel{-\!\!\!\weak}u,$ as $n\to\infty,$ in $L^2\big((0,T);L^2(\Omega)\big)_\w,$ which contradicts \eqref{cv}. Hence \eqref{lemint1} holds for the whole sequence, from which \eqref{lemint2} and \eqref{demlemin7} follow. In addition, if \eqref{lemint3} does not hold for the whole sequence then by \eqref{demlemin9}, there exists a sequence $(n_\ell)_{\ell\in\N}\subset\N$ for which we have that $g_0^q(u_{n_\ell})\cancel{-\!\!\!\weak}g_0^q(u),$ as $\ell\to\infty,$ in $L^{q+1}\big((0,T)\times\Omega\big)_\w.$ Still by \eqref{cv} and the diagonal procedure, there exists a subsequence $(n_{\ell_j})_{j\in\N}\subset(n_\ell)_{\ell\in\N}$ such that $u_{n_{\ell_j}}\tends u,$ as $j\to\infty,$ a.e.\,in $(0,\infty)\times\Omega,$ and so we have $g_0^q(u_{n_{\ell_j}})\tends g_0^q(u),$ as $j\to\infty$, a.e.\,in $(0,\infty)\times\Omega.$ This yields, with \eqref{demlemin6} that $g_0^q(u_{n_{\ell_j}})-\!\!\!-\!\!\!\!\weak g_0^q(u),$ as $j\to\infty,$ in $L^{q+1}\big((0,T)\times\Omega\big)_\w,$ a contradiction. Therefore \eqref{lemint3} holds true. Hence the claim. Moreoever, \eqref{lemint5} comes from \eqref{cv} and \eqref{demlemin10}. Finally, by \eqref{cv}, \eqref{demlemin7}, \eqref{demlemin8} and the weak lower semicontinuity of the norm, we obtain \eqref{L2+} for any $t=T>0$ and $s=0,$ with the help of \eqref{L2n+}. Now, assume that $m=0$ and let us establish \eqref{lemint4} for a subsequence. Since $\sup_{n\in\N}\|U_n\|_{L^\infty((0,\infty)\times\Omega)}\le1,$ there exists $U\in L^\infty((0,\infty)\times\Omega)$ such that, renumembering $(n_k)_{k\in\N}$ if necessary, we have
\begin{gather}
\label{demlemin11}
U_{n_k}\underset{k\tends\infty}{\overset{L^\infty((0,T)\times\Omega)_{\w\star}}{-\!\!\!-\!\!\!-\!\!\!-\!\!\!-\!\!\!-\!\!\!-\!\!\!-\!\!\!-\!\!\!-\!\!\!-\!\!\!\weak}}U.
\end{gather}
By \eqref{demlemint3} and \eqref{demlemin11}, we get that $U$ is a saturated section associated to $u$ (\cite[Lemma~6.1]{MR4848642}). Now, we return to the general case $m\in[0,1].$ Using \eqref{cv} and \eqref{lemint2}--\eqref{lemint5} with the sequence $(n_k)_{k\in\N}$ in \eqref{demlemin11} to pass to the limit as $k\to\infty$ in the equation \eqref{gl} satisfied by $(u_{n_k})_{k\in\N},$ we obtain that $(u,f)$ (or $(u,U,f),$ if $m=0)$ satisfied \eqref{gl} in $\Dr^\p\big((0,\infty)\times\Omega\big),$ so that $u$ is a weak solution to \eqref{gl}--\eqref{glb}. Finally, assume that $m=0.$ Writing that for any $n\in\N,$
\begin{gather*}
U_n-U=\e^{-\vi\theta}\left(\frac{\partial u}{\partial t}-\frac{\partial u_n}{\partial t}\right)-(\Delta u-\Delta u_n)
+b(g_0^p(u)-g_0^p(u_n))+\gamma(u-u_n)-(f_n-f)
\end{gather*}
and passing to the limit in $\Dr^\p\big((0,\infty)\times\Omega\big),$ we again obtain thanks to \eqref{cv}, \eqref{lemint1}--\eqref{lemint3} and \eqref{lemint5}, that $U_n\xrightarrow[n\to\infty]{}U,$ in $\Dr^\p\big((0,\infty)\times\Omega\big),$ and therefore \eqref{lemint4} is also true. The proof is ended.
\medskip
\end{proof*}

\begin{vproof}{of Theorem~\ref{thmweak}.}
Apply the densities of $\Dr\big((0,\infty);L^2(\Omega)\big)$ in $L^1_\loc([0,\infty);L^2(\Omega))$ and $\Dr(\Omega)$ in $L^2(\Omega),$ Theorem~\ref{thmstrongH2}, Proposition~\ref{propdep}, completeness of $C\big([0,T];L^2(\Omega)\big)$ and Lemma~\ref{lemint}.
\medskip
\end{vproof}

\begin{vproof}{of Theorem~\ref{thmweaksol}.}
Let $(u,f)$ be a weak solution and let $(u_n,f_n)_{n\in\N}$ be a any sequence of $H^2$-strong solutions to \eqref{gl} satisfying \eqref{fn}--\eqref{cv}. We first establish \eqref{L2+}. By Lemma~\ref{lemint}, \eqref{L2+} holds on any time interval $[0,T],$ $T\ge0.$ Let $t\ge s\ge0.$ For $\sigma>0,$ let $g(\sigma)=f(\sigma+s)$ and let $(v,g)$ be the weak solution to \eqref{gl} such that $v(0)=u(s).$ By uniqueness of weak solutions, $v(\sigma)=u(\sigma+s),$ for any $\sigma\ge0.$ Applying \eqref{L2+} on the time interval $[0,t-s]$ to $(v,g),$ we obtain the general case \eqref{L2+}. By Lemma~\ref{lemint}, it remains to prove \eqref{W11}--\eqref{senseq}. Let $T>0$ and let $v\in L^\infty\big((0,T);Y_{m,p}\big).$ It follows from \eqref{gl}, \eqref{cv}, \eqref{lemint2} and \eqref{lemint3} that
\begin{align*}
	&	\; \lim_{n\to\infty}\left\langle\e^{-\vi\theta}\frac{\partial u_n}{\partial t},v\right\rangle_{L^1((0,T);Y_{m,p}^\star),L^\infty((0,T);Y_{m,p})}
			=\langle\Delta u,v\rangle_{L^1((0,T);H^{-1}),L^\infty((0,T);H^1_0)}		\\
   -	&	\; \langle ag_0^m(u),v\rangle_{L^1((0,T);L^\frac{m+1}m),L^\infty((0,T);L^{m+1})}
   			-\langle bg_0^p(u),v\rangle_{L^1((0,T);L^\frac{p+1}p),L^\infty((0,T);L^{p+1})}	\\
   -	&	\; \langle\gamma u,v\rangle_{L^1((0,T);L^2),L^\infty((0,T);L^2)}
   			+\langle f,v\rangle_{L^1((0,T);L^2),L^\infty((0,T);L^2)},
\end{align*}
from which \eqref{W11}--\eqref{senseq} follow, by embedding $L^1\big((0,T);Y_{m,p}^\star\big)\inj \Dr^\p\big((0,T)\times\Omega\big)$ and \eqref{lemint5}.
\medskip
\end{vproof}

\begin{vproof}{of Theorem~\ref{thmweaksols}.}
Let $(u,U,f)$ be a weak solution and let $(u_n,U_n,f_n)_{n\in\N}$ be any sequence of strong solutions to \eqref{gl} satisfying \eqref{fn}--\eqref{cv}. Let $(Y_\ell)_{\ell\in\N_0}$ be any $L^1$-approximating sequence of RNP-spaces (\cite[Lemma~5.8]{MR4725781}). Let $T>0$ and let $\ell\in\N.$ Since $Y_\ell\inj L^1(\Omega)$ with dense embedding, it follows that
\begin{gather}
\label{demthmweaksols2}
L^\infty\big((0,T);Y_\ell)\big)\inj L^1\big((0,T);Y_\ell)\big)\inj L^1\big((0,T);L^1(\Omega)\big)\cong L^1\big((0,T)\times\Omega\big),
\end{gather}
with dense embeddings. By duality, since $Y_\ell$ is reflexive, we obtain that 
\begin{gather}
\label{demthmweaksols3}
L^\infty\big((0,T)\times\Omega\big)\inj L^1\big((0,T);Y_\ell)\big)^\star\cong L^\infty\big((0,T);Y_\ell^\star\big)\inj L^1\big((0,T);Y_\ell^\star\big),	\\
\label{demthmweaksols4}
L^1\big((0,T);Y_\ell^\star)\big)^\star\cong L^\infty\big((0,T);Y_\ell\big).
\end{gather}
It follows that for any $v\in L^\infty\big((0,T);Y_\ell)$ and $n\in\N,$
\begin{gather}
\left\langle U_n-U,v\right\rangle_{L^\infty((0,T)\times\Omega),L^1((0,T)\times\Omega)}
=\left\langle U_n-U,v\right\rangle_{L^1((0,T);Y_\ell^\star),L^\infty((0,T);Y_\ell)}
\end{gather}
from which we deduce that with the help of \eqref{lemint4} that
\begin{gather}
\label{demthmweaksols5}
U_n\underset{n\tends\infty}{\overset{L^1((0,T);Y_\ell^\star))_\w}{-\!\!\!-\!\!\!-\!\!\!-\!\!\!-\!\!\!-\!\!\!-\!\!\!-\!\!\!-\!\!\!-\!\!\!-\!\!\!\weak}}U.
\end{gather}
Using \eqref{lemint2}, \eqref{lemint3}, \eqref{lemint5}, \eqref{demthmweaksols5} and that for any $T>0,$ $L^1\big((0,T);Z_{\ell,p}^\star\big)\inj \Dr^\p\big((0,T)\times\Omega\big),$ we prove \eqref{W11l}--\eqref{senseqs} in the same way as for Theorem~\ref{thmweaksols}. Finally, \eqref{thmweaksols1} comes from \eqref{lemint4}, and \eqref{thmweaksols2} comes from the equation \eqref{gl} and Lemma~\ref{lemint}. Notice that $\Dr(\Omega)\inj Y_0$ with dense embedding and that $Y_\ell^\star\inj Y_0^\star,$ so that \eqref{W11l}--\eqref{senseqs} hold true for $\ell=0.$ Finally, if $|\Omega|<\infty$ then $H^1_0(\Omega)\cap L^1(\Omega)\cap L^{p+1}(\Omega)=H^1_0(\Omega)\cap L^{p+1}(\Omega),$ which is reflexive and separable. In this case, the above arguments work for $H^1_0(\Omega)\cap L^{p+1}(\Omega)$ in place of $Z_{\ell,p}.$
\medskip
\end{vproof}

\begin{vproof}{of Theorem~\ref{thmstrongaH1}.}
Let $u_0\in H^1_0(\Omega)$ and $f\in L^2_\loc\big([0,\infty);L^2(\Omega)\big).$ Let $u$ be the unique weak solution to \eqref{gl}--\eqref{u0} given by Theorem~\ref{thmweak}. Let $(\vphi_n)_{n\in\N}\subset\Dr(\Omega)$ and $(f_n)_{n\in\N}\subset\Dr\big((0,\infty);L^2(\Omega)\big)$ be such that $\vphi_n\xrightarrow[n\to\infty]{H^1_0(\Omega)}u_0$ and $f_n\xrightarrow[n\to\infty]{L^2((0,T);L^2)}f,$ for any $T>0.$ For each $n\in\N,$ let $(u_n,f_n)$ be the unique $H^2$-solution to \eqref{gl} such that $u_n(0)=\vphi_n$ given by Theorem~\ref{thmstrongH2}. By Proposition~\ref{propdep}, we have for any $T>0,$
\begin{gather}
\label{demthmstrongH11}
u_n\xrightarrow[n\tends\infty]{C([0,T];L^2(\Omega))}u, \; \nabla u_n\xrightarrow[n\tends\infty]{C([0,T];H^{-1}(\Omega))^N}\nabla u
\; \text{ and } \; \Delta u_n\xrightarrow[n\tends\infty]{C([0,T];H^{-2}(\Omega))}\Delta u.
\end{gather}
Since each $u_n$ satisfies Property~\ref{thmstrongH24} of Theorem~\ref{thmstrongH2}, we get after integration that for any $n\in\N$ and $t\ge0,$
\begin{gather}
\label{demthmstrongH12}
\|\nabla u_n(t)\|_{L^2(\Omega)}^2+\cos\theta\vint_0^t\|\Delta u_n(s)\|_{L^2(\Omega)}^2\d s
\le\|\nabla\vphi_n\|_{L^2(\Omega)}^2+\frac1{\cos\theta}\vint_0^t\|f_n(s)\|_{L^2(\Omega)}^2\d s.
\end{gather}
We infer with the help of \eqref{demthmstrongH11} and \eqref{demthmstrongH12} that for any $T>0,$ $(u_n)_{n\in\N}$ and $(\Delta u_n)_{n\in\N}$ are bounded in $C\big([0,T];H^1_0(\Omega)\big)$ and in $L^2\big((0,T);L^2(\Omega)),$ respectively. It follows from \eqref{demthmstrongH11} that for any $T>0,$
\begin{gather}
\label{demthmstrongH13}
u\in C_\w\big([0,\infty);H^1_0(\Omega)\big) \text{ and } \Delta u\in L^2_\loc\big([0,\infty);L^2(\Omega)\big),			\\
\label{demthmstrongH14}
u_n\underset{n\to\infty}{-\!\!\!-\!\!\!-\!\!\!-\!\!\!\weak}u, \text{ in } L^\infty\big((0,T);H^1_0(\Omega)\big)_{\w\star},		\\
\label{demthmstrongH15}
\nabla u_n(t)\underset{n\to\infty}{-\!\!\!-\!\!\!-\!\!\!-\!\!\!\weak}\nabla u(t), \text{ in } L^2(\Omega)^N_\w, \; \forall t\in[0,T],	\\
\label{demthmstrongH16}
\Delta u_n\underset{n\to\infty}{-\!\!\!-\!\!\!-\!\!\!-\!\!\!\weak}\Delta u, \text{ in } L^2\big((0,T);L^2(\Omega)\big)_\w.
\end{gather}
Using \eqref{demthmstrongH15}, \eqref{demthmstrongH16}, and the weak lower semicontinuity of the norm to pass to the limit in \eqref{demthmstrongH12}, we obtain \eqref{thmstrongaH12} with $s=0,$ for any $t>0.$ Now, we fix $s>0.$ Let $(v,f(\,.\,+s))$ be the weak solution to \eqref{gl} such that $v(0)=u(s).$ By uniqueness of weak solutions, $v(t)=u(t+s),$ for any $t\ge0.$ We then obtain the general case \eqref{thmstrongaH12}. Now, if $m>0$ then by \eqref{Lm}, \eqref{demthmstrongH13} and \eqref{gl}, we get that $u\in W^{1,q}_\loc\big([0,\infty);L^2(\Omega)+L^\frac{m+1}m(\Omega)+L^\frac{p+1}p(\Omega)\big),$ where $q$ is as the statement of the theorem. Finally, if $m=0$ and $|\Omega|<\infty$ then $U\in L^\infty\big((0,\infty)\times\Omega)\big)\inj L^2_\loc\big([0,\infty);L^2(\Omega)\big),$ so that $U:[0,\infty)\tends L^2(\Omega)$ is measurable and satisfies \eqref{thmstrongaH13}. Using again \eqref{Lm}, \eqref{demthmstrongH13} and \eqref{gl}, we get the desired regularity for $u$ which allows to apply \cite[Lemma~A.5]{MR4053613}, from which \eqref{L2} follows.
\medskip
\end{vproof}

\begin{vproof}{of Theorem~\ref{thmstrongH1}.}
Let the assumptions of the theorem be fulfilled. Let $u_0\in H^1_0(\Omega)$ and let $u$ be the unique weak solution to~\eqref{gl}--\eqref{u0} given by Theorem~\ref{thmweak}. We begin with the case $b=0.$ By \eqref{Lm}, \eqref{thmstrongaH11}, \eqref{thmstrongaH13} and \eqref{gl}, we have that $u\in W^{1,\frac{m+1}m}_\loc\big([0,\infty);X^\star\big),$ where $X=H^1_0(\Omega)\cap L^{m+1}(\Omega).$ Hence $u$ satisfies \eqref{defsol11} with $q=m$ and is an $H^1_0$-solution. If we have \eqref{thmstrongH112} with $m>0$ then by \eqref{thmstrongaH11} and Sobolev' embedding, we have for any $T>0,$
\begin{gather*}
\vint_0^T\|g_0^p(u(t))\|_{L^\frac{p+1}p(\Omega)}^\frac{m+1}m\d t=\vint_0^T\|u(t)\|_{L^{p+1}(\Omega)}^{p\frac{m+1}m}\d t
\le C(N,p)T\|u\|_{L^\infty((0,T);H^1_0(\Omega))}^{p\frac{m+1}m},
\end{gather*}
while if $m=0$ then $U\in L^\infty_\loc\big([0,\infty);L^2(\Omega)\big)$ by \eqref{thmstrongaH13}. Hence with \eqref{Lm}, \eqref{thmstrongaH11} and \eqref{gl}, $u$ satisfies \eqref{defsol11} with $q=m$ and is an $H^1_0$-solution. Now, assume that we have \eqref{thmstrongH113}. If $m>0$ then we have by H\"{o}lder's inequality that
\begin{gather*}
\vint_0^T\|g_0^m(u(t))\|_{L^\frac{p+1}p(\Omega)}^\frac{p+1}p\d t=\iint\limits_{(0,T)\times\Omega}|u(t,x)|^{m\frac{p+1}p}\d t\d x
\le (T|\Omega|)^\frac{p-m}p\left(\vint_0^T\|u(t)\|_{L^{p+1}(\Omega)}^{p+1}\d t\right)^\frac{m}p,
\end{gather*}
for any $T>0,$ while if $m=0$ then $U\in L^\infty_\loc\big([0,\infty);L^2(\Omega)\big)$ by \eqref{thmstrongaH13}. We infer with \eqref{Lm}, \eqref{thmstrongaH11}, \eqref{thmstrongaH13} and \eqref{gl}, that $u$ satisfies \eqref{defsol11} with $q=p$ and is an $H^1_0$-solution. So, in these three cases, $u$ is an $H^1_0$-solution, which is unique with the help of Proposition~\ref{propdep}. It remains to show that the map $t\longmapsto\|u(t)\|_{L^2(\Omega)}^2$ belongs to $W^{1,1}_\loc\big([0,\infty);\R\big)$ and that \eqref{L2} holds for almost every $t>0.$ Let $Y=H^1_0(\Omega)\cap L^{m+1}(\Omega)\cap L^{p+1}(\Omega).$ By the regularity of $u,$ we may take the $Y^\star-Y$ duality product of \eqref{gl} with $\e^{-\vi\theta}u.$ Then \cite[Lemma~A.5]{MR4053613} gives the desired result.
\medskip
\end{vproof}

\begin{rmk}
\label{rmkthmH1}
The assumptions $f\in L^2_\loc\big([0,\infty);L^2(\Omega)\big)$ in Theorem~\ref{thmstrongaH1}, and \eqref{thmstrongH11} in Theorem~\ref{thmstrongH1}, may be replaced with
\begin{gather*}
f\in L^1_\loc\big([0,\infty);H^1_0(\Omega)\big),		\\
f\in L^1_\loc\big([0,\infty);H^1_0(\Omega)\big)
\cap L^\frac{m+1}m_\loc\big([0,\infty);H^{-1}(\Omega)+L^\frac{m+1}m(\Omega)+L^\frac{p+1}p(\Omega)\big),
\end{gather*}
respectively, as for the $H^1_0$-solutions of the Schr\"{o}dinger equation in \cite{MR4340780,MR4503241,MR4725781} (which corresponds to $\left.\theta=\pm\frac\pi2 \text{ in }\eqref{gl}\right).$ In this case, \eqref{L2} is no more valid and \eqref{thmstrongaH12} has to replaced by
\begin{gather}
\label{rmkthmH11}
\begin{split}
&\frac12\|\nabla u(t)\|_{L^2(\Omega)}^2+\cos\theta\vint_s^t\|\Delta u(\sigma)\|_{L^2(\Omega)}^2\d\sigma			\\
\le\;&\frac12\|\nabla u(s)\|_{L^2(\Omega)}^2
	+\left(\|\nabla u(s)\|_{L^2(\Omega)}+\vint_s^t\|f(\sigma)\|_{L^2(\Omega)}\d\sigma\right)\vint_s^t\|f(\sigma)\|_{L^2(\Omega)}\d\sigma,
\end{split}
\end{gather}
for any $t\ge s\ge0.$ In addition, the regularity of $u$ is no more $W^{1,2}_\loc$ but $W^{1,1}_\loc,$ and the equation is no more satisfied in $L^2_\loc$ but in $L^1_\loc.$ We proceed as follows. We use the notation of the proof of Theorem~\ref{thmstrongaH1}, but with the assumption that $(f_n)_{n\in\N}\subset\Dr\big((0,\infty);H^1_0(\Omega)\big)$ with $f_n\xrightarrow[n\to\infty]{L^1((0,T);H^1_0)}f,$ for any $T>0.$ Taking the $L^2$-scalar product of \eqref{gl} with $-\e^{-\vi\theta}\Delta u_n,$ we obtain for any $n\in\N$ and a.e.\;$t>0$ (\cite[Lemma~A.5]{MR4053613} and Lemma~\ref{lemenereg}),
\begin{gather}
\label{rmkthmH12}
\frac12\frac{\d}{\d t}\|\nabla u_n(t)\|_{L^2(\Omega)}^2+\cos\theta\|\Delta u_n(t)\|_{L^2(\Omega)}^2
\le\|\nabla f_n(t)\|_{L^2(\Omega)}\|\nabla u_n(t)\|_{L^2(\Omega)},
\end{gather}
from which we get,
\begin{gather}
\label{rmkthmH13}
\|\nabla u_n(t)\|_{L^2(\Omega)}\le\|\nabla\vphi_n\|_{L^2(\Omega)}+\vint_0^t\|\nabla f_n(s)\|_{L^2(\Omega)}\d s,
\end{gather}
for any $n\in\N$ and $t\ge0.$ Putting \eqref{rmkthmH13} in \eqref{rmkthmH12} and integrating the result, we get
\begin{gather}
\label{rmkthmH14}
\begin{split}
&\frac12\|\nabla u_n(t)\|_{L^2(\Omega)}^2+\cos\theta\vint_0^t\|\Delta u_n(s)\|_{L^2(\Omega)}^2\d s			\\
\le\;&\frac12\|\nabla\vphi_n\|_{L^2(\Omega)}^2
	+\left(\|\nabla\vphi_n\|_{L^2(\Omega)}+\vint_0^t\|f_n(s)\|_{L^2(\Omega)}\d s\right)\vint_0^t\|f_n(s)\|_{L^2(\Omega)}\d s,
\end{split}
\end{gather}
for any $n\in\N$ and $t\ge0,$ instead of \eqref{demthmstrongH14}. Finally, we proceed as in the proof of Theorem~\ref{thmstrongaH1} to obtain \eqref{rmkthmH11}, whereas the proof of Theorem~\ref{thmstrongH1} is unchanged. In the same way, we get from \eqref{rmkthmH13} that
\begin{gather}
\label{rmkthmH15}
\|\nabla u(t)\|_{L^2(\Omega)}\le\|\nabla u(s)\|_{L^2(\Omega)}+\vint_s^t\|\nabla f(\sigma)\|_{L^2(\Omega)}\d\sigma,
\end{gather}
for any $t\ge s\ge0.$
\end{rmk}

\baselineskip .4cm


\def\cprime{$^\prime$}

\end{document}